\numberwithin{equation}{section}
\newtheorem{Theorem}[equation]{Theorem}
\newtheorem{Proposition}[equation]{Proposition}
\newtheorem{Lemma}[equation]{Lemma}
\newtheorem{Corollary}[equation]{Corollary}
\newtheorem{Conjecture}[equation]{Conjecture}
\theoremstyle{definition}
\newtheorem{Remark}[equation]{Remark}
\newtheorem{eg}[equation]{Example}
\newenvironment{example}[1][]{\begin{eg}[#1] \pushQED{\qed}}{\popQED \end{eg}}
\newtheorem{Definition}[equation]{Definition}
\newtheorem{Notation}[equation]{Notation}
\newcommand{\sD}{\mathscr{D}}
\newcommand{\cF}{\mathcal{F}}
\newcommand{\cG}{\mathcal{G}}
\newcommand{\sG}{\mathscr{G}}
\newcommand{\cH}{\mathcal{H}}
\newcommand{\cQ}{\mathcal{Q}}
\newcommand{\cR}{\mathcal{R}}
\newcommand{\sR}{\mathscr{R}}
\newcommand{\cX}{\mathcal{X}}
\newcommand{\CC}{\mathbb{C}}
\newcommand{\RR}{\mathbb{R}}
\renewcommand{\phi}{\varphi}
\renewcommand{\emptyset}{\varnothing}
\renewcommand{\tilde}[1]{\widetilde{#1}}
\newcommand{\ol}[1]{\overline{#1}}
\newcommand{\ul}[1]{\underline{#1}}
\newcommand{\wt}[1]{\widetilde{#1}}
\newcommand{\oded}[1]{\textcolor{red}{$[\star$ Oded: #1 $\star]$}}
\newcommand{\owen}[1]{\textcolor{green}{$[\star$ Owen: #1 $\star]$}}
\renewcommand{\hom}{\operatorname{Hom}}
\DeclareMathOperator{\norm}{Norm}
\DeclareMathOperator{\End}{End}
\DeclareMathOperator{\Aut}{Aut}
\DeclareMathOperator{\Cone}{Cone}
\DeclareMathOperator{\Deck}{Deck}
\DeclareMathOperator{\Cham}{Cham}
\newcommand{\BH}{\mathscr{BH}} 
\newcommand{\D}{\mathscr{D}} 
\newcommand{\R}{\mathscr{R}} 
\newcommand{\xMapsto}[2][]{\ext@arrow 0599{\Mapstofill@}{#1}{#2}}
\def\Mapstofill@{\arrowfill@{\Mapstochar\Relbar}\Relbar\Rightarrow}
\newcommand{\Span}{\mathrm{Span}}
\newcommand\iso\cong
\newcommand\into\hookrightarrow
\newcommand\onto\twoheadrightarrow
\newcommand{\Hom}{\operatorname{Hom}}
\newcommand{\nc}{\newcommand}
\nc{\la}{\lambda}
\nc{\Iso}{\mathsf{Iso}}
\nc{\Id}{\mathrm{Id}}
\begin{document}

\title[Normalisers and hyperplane arrangements]{Normalisers of parabolic subgroups of Artin--Tits groups  \\ and Tits cone intersections}

\author{Owen Garnier}
\address{O.~Garnier: Departamento de \'Algebra e Instituto de Matem\'aticas de la Universidad de Sevilla, Spain}
\email{owen.garnier@math.cnrs.fr}
\author{Edmund Heng}
\address{E.~Heng: School of Mathematics and Statistics, University of Sydney, Australia}
\email{edmund.heng@sydney.edu.au}
\author{Anthony Licata}
\address{A.~Licata: CNRS-ANU International Research Laboratory FAMSI and Mathematical Sciences Institute, The Australian National University, Australia}
\email{anthony.licata@anu.edu.au}
\author{Oded Yacobi}
\address{O.~Yacobi: School of Mathematics and Statistics, University of Sydney, Australia}
\email{oded.yacobi@sydney.edu.au}
\date{}

\begin{abstract}
Let $(W,S)$ be a Coxeter system and let $J \subseteq S$. Motivated by 3-fold flops, Iyama and Wemyss study the hyperplane arrangement in the Tits cone intersection of $J$, which is a $J$-relative generalisation of the classical Coxeter arrangement. 
For $\Gamma$ of spherical type, we show that the complexified complement of this $J$-relative hyperplane arrangement is a $K(\pi,1)$ space for the normaliser (quotient) of the standard parabolic subgroup attached to $J$ in the Artin--Tits group.
For general $\Gamma$ we show that the Brink--Howlett groupoid, which describes normalisers of parabolic subgroups of the Coxeter groups, has its universal cover described by the wall-and-chamber structure of the Tits cone intersection. We use this to show that wall crossing sequences satisfy an ``atomic Matsumoto relation'', generalising a theorem of Ko and answering questions raised by Iyama and Wemyss.



\end{abstract}

\maketitle

\section{Introduction}
\subsection{General overview}
Let $(W,S)$ be a Coxeter system and consider the associated hyperplane arrangement $(\Cone^\circ, \cH)$. Here $\Cone^\circ$ denotes the interior of the Tits cone and $\cH$ is the set of reflecting hyperplanes.  Much of the structure of $W$ is reflected in the geometry of the arrangement $(\Cone^\circ, \cH)$, and this geometry also
plays an important role in the study of the Artin--Tits group $A$ of $W$.
Indeed, the complexified hyperplane complement $\cX^\CC$ associated to $(\Cone^\circ, \cH)$ (see \eqref{eqn:complexfiedTitscone}) carries a free and properly discontinuous action of $W$, and the quotient $\cX^\CC\to \cX^\CC/W$ is a normal covering inducing the short exact sequence:
\begin{align} \label{eqn:introclassicalses}
1 \to \pi_1(\cX^\CC,x) \to \pi_1(\cX^\CC/W,x) \to W \to 1.
\end{align}
We then have isomorphisms $A\cong \pi_1(\cX^\CC/W,x)$ and $P\cong \pi_1(\cX^\CC,x)$ ($P$ is the pure Artin--Tits group). 
Conjecturally, $\cX^\CC/W$ is a classifying space for $A$.
This statement -- known as the $K(\pi,1)$ conjecture -- is proven  for spherical types \cite{Del72}, FC-types \cite{CD_FCtype}, affine types \cite{Paolini-Salvetti}, and more \cites{DPS_rank3,Huang_4cycles,HH_productwithZ}, but remains open in general.

Now let $J\subseteq S$.  The main goal of this paper is to generalise and study $J$-relative versions of the groups and arrangements associated to $(W,S)$.  In this relative setting, the groups $W$ and $A$ are replaced by normaliser quotients of the standard parabolic subgroups associated to $J$, and the Coxeter arrangement $(\Cone^\circ, \cH)$ is replaced by an arrangement restricted to the \textit{Tits cone intersection} of $J$ introduced by Iyama--Wemyss \cite{IW}.

\subsubsection{Normaliser quotients of parabolics}
The choice $J$ defines standard parabolic subgroups $W_J \subseteq W$ and $A_J\subseteq A$,
normaliser subgroups $\mathrm{Norm}_W(W_J), \mathrm{Norm}_A(A_J)$ and normaliser quotients:
\begin{equation}\label{def:N(W,J)_and_N(A,J)}
N(W,J)\coloneq \mathrm{Norm}_W(W_J)/W_J\quad \text{ and } \quad N(A,J)\coloneq \mathrm{Norm}_A(A_J)/A_J.
\end{equation}
There is significant literature investigating these groups: the papers \cites{Lusztig76, Howlett80, Borcherds} initiate the study of normaliser quotients of Coxeter groups.  In particular, Brink and Howlett introduce a groupoid $\BH$ in \cite{BH} (see Section \ref{sec:BHgroupoid}) and use it to present and study $N(W,J)$.
For Artin--Tits groups, normaliser quotients were studied in \cites{Paris, GodelleI, Godelle_RibbonII} for spherical types,  \cite{Godelle_FCparabolic} for FC type,  \cite{Godelle_CAT(0)} for two-dimensional type, and \cite{Haettel_injectivemetric} for affine types $\tilde{A}$ and $\tilde{C}$.

\subsubsection{Tits cone intersection}
On the geometric side, the Tits cone intersection of $J$ is a convex cone $\Cone(J)\subseteq \Cone$, and we can consider the restriction $\cH^J$ of $\cH$ to this subspace; see Section \ref{sec:Jdef} for the precise definitions. 
As in the classical case, the hyperplane arrangement in the interior $(\Cone(J)^\circ, \cH^J)$ is locally finite (see Proposition \ref{prop:locallyfinite}), and when $J=\emptyset$ the arrangement $(\Cone(J)^\circ,\cH^J)$ recovers the classical Coxeter arrangement $(\Cone^\circ,\cH)$. 

The hyperplane arrangement $(\Cone(J)^\circ,\cH^J)$ plays an important role in the homological minimal model program and the theory of 3-fold flops \cite{Wemyss_flopscluster}. 
Here, each chamber of $(\Cone(J)^\circ,\cH^J)$ labels the derived category of a minimal model of a 3-fold flopping contraction; or algebraically, its corresponding non-commutative crepant resolution. 
Adjacent chambers are related by flopping equivalences and geodesic paths with the same source and target give rise to compositions of flopping functors that are isomorphic. 
As such, the fundamental group of the complexified hyperplane complement $\cX_J^\CC$ (see \eqref{eqn:complexifiedJcone}) acts on the derived categories via composition of these flopping equivalences. 
For more details, we refer the reader to \cites{DW_braidsflops,HW_faithfulhyperplane,HW_stability3fold} for the geometric setting, and \cites{AW_contractionalg,IW} for the algebraic setting.
Motivated by this, Iyama and Wemyss classify these arrangements in low ranks \cite{IW}, noting that they are not in general Coxeter arrangements (see Figure \ref{fig:TCI-arrangement} in \S\ref{sec:example}). 

\subsection{Main results}
Just as the Coxeter arrangement $(\Cone^\circ,\cH)$ controls the Coxeter group $W$ and Artin--Tits group $A$, our results show that the Tits cone intersection $(\Cone(J)^\circ,\cH^J)$ controls the normaliser quotients:
\begin{itemize}
\item In arbitrary type, we show that the groupoid $\BH$ 
can be obtained geometrically from the arrangement $(\Cone(J)^\circ,\cH^J)$. We also prove an atomic Matsumoto theorem for $\BH$.
\item In spherical type, we show that the $N(W,J)$-quotient of the complexified hyperplane complement $\cX_J^\CC$ associated to $(\Cone(J)^\circ,\cH^J)$ is a $K(\pi,1)$ space for $N(A,J)$. 
\end{itemize}
We now state the precise theorems.

\subsubsection{Brink--Howlett groupoids via Tits cone intersections}
For our first main theorem, we recall that, in direct analogy with topological spaces, there are notions of groupoid coverings, deck transformations, etc.\ (see Section \ref{sec:groupoid_coverings}).
In particular, any connected groupoid $\sG$ admits a universal groupoid covering $\wt{\sG} \to \sG$, whose deck transformation group is the vertex group of $\sG$.

\begin{Theorem}[= Theorem \ref{thm:WCuniversalcover}]\label{thm:main2v2}
The universal cover of $\BH$ is identified with the wall-crossing groupoid $\wt\BH$ of the Tits cone intersection: the objects of $\wt\BH$ are the chambers of $(\Cone(J)^\circ,\cH^J)$, and its generating morphisms are simple wall crossings between adjacent chambers.
Moreover, $N(W,J)$ is the deck transformation group of this covering, so that $\BH$ can be obtained as the $N(W,J)$-quotient $\wt\BH$.
\end{Theorem}
We emphasise that our theorem above is independent of the results of Brink--Howlett in \cite{BH}: for example, it is a consequence of our theorem that $\BH$ has vertex group $N(W,J)$. 
In fact, we obtain from this new proofs of some known results related to $N(W,J)$ and $\BH$; see Corollary \ref{cor:equivalence_brink_howlett} and \ref{cor:BHthmA}.
Moreover, the groupoid covering in Theorem \ref{thm:main2v2} sends the generators in $\wt{\BH}$ (simple wall crossings) to the generators of $\BH$ considered by Brink and Howlett in \cite{BH}.
As a result, standard expressions in $\BH$ (analogs of reduced expressions in $W$; see Definition \ref{def:stdexp}) are in bijection with geodesic paths in $(\Cone(J)^\circ,\cH^J)$; see Lemma \ref{lem:stdexp-geodesic} for more details.


Brink and Howlett also showed in \cite{BH} that $\BH$ carries a groupoid presentation that is formally parallel to the Coxeter presentation of $W$: its relations are split into quadratic relations and braid-like relations known as the \textit{atomic braid relations} (see Section \ref{sec:atomicM} and Theorem \ref{thm:BHthmA}). 
In the case of Coxeter groups, the famous Matsumoto theorem states that any two standard expressions (i.e.\ reduced expressions) of a given element are related by a sequence of braid relations, without needing to use quadratic relations. Our second main result is a similar statement at the level of Brink--Howlett groupoids $\BH$ (and its universal cover):
\begin{Theorem}[= Theorem \ref{thm:atomicMat}, ``Atomic Matsumoto theorem'']\label{thm:atomicMatintro}
    Any two standard expressions of a morphism in $\BH$ are related by a sequence of atomic braid relations.
\end{Theorem}
This generalises \cite[Theorem 1.8]{Ko25} beyond the finitary case; see Remark \ref{rem:relation_to_ko}. Translating back to $\wt{\BH}$ via Theorem \ref{thm:main2v2} and Lemma \ref{lem:stdexp-geodesic} gives Corollary \ref{cor:arrangement_atomicMat}: any two geodesic paths in $(\Cone(J)^\circ,\cH^J)$ with the same endpoints are related by a finite sequence of codimension-two relations (see Definition \ref{def:codim2rel}). This answers a question raised by Iyama--Wemyss \cite[Remark 1.62]{IW}.

\subsubsection{\texorpdfstring{$K(\pi,1)$}{K(pi,1)} spaces via Tits cone intersections}
Analogous to the classical setting, the complexified hyperplane complement $\cX_J^\CC$ (see \eqref{eqn:complexifiedJcone}) associated to $(\Cone(J)^\circ, \cH^J)$ carries a free and properly discontinuous action of the normaliser quotient $N(W,J)$. Therefore $\cX_J^\CC\to \cX_J^\CC/N(W,J)$ is a normal covering inducing the short exact sequence (cf.\ \eqref{eqn:introclassicalses})
\begin{align}\label{eq:ses3}
1 \to \pi_1(\cX_J^\CC,x) \to \pi_1(\cX_J^\CC/N(W,J),x) \to N(W,J) \to 1.
\end{align}
The algebraic counterpart to \eqref{eq:ses3} is the short exact sequence (Lemma \ref{lem:PAW})
\begin{align}\label{eq:ses4}
1 \to N(P,J) \to N(A,J) \to N(W,J) \to 1,
\end{align}
where $N(P,J)$ is defined as the image of $P\cap \mathrm{Norm}_A(A_J)$ in $N(A,J)$:
\begin{align}
        N(P,J) \coloneqq \{\beta A_J \in N(A,J) \mid P\cap \beta A_J \neq \emptyset\}. \label{def:N(P,J)}
\end{align}
Our $K(\pi,1)$ theorem says that, in spherical type, the geometric and algebraic sequences are isomorphic.
\begin{Theorem}\label{thm:main1v2}
Assume that $W$ is a finite Coxeter group, and let $x\in \cX_J^\CC$. The short exact sequences \eqref{eq:ses3} and \eqref{eq:ses4} are isomorphic. In particular, we have the following isomorphisms:
    \begin{align*}
        N(A,J) &\cong \pi_1(\cX_J^\CC/N(W,J),x), \\ 
        N(P,J) &\cong \pi_1(\cX_J^\CC,x).
    \end{align*}
    Moreover, $\cX^\CC_J/N(W,J)$ (resp. $\cX^\CC_J$) is a $K(\pi,1)$ space for $N(A,J)$ (resp. for $N(P,J)$).
\end{Theorem}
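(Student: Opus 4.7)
The strategy is to reduce Theorem \ref{thm:main1v2} to Deligne's classical $K(\pi,1)$ theorem by recognising the Tits cone intersection arrangement itself as a finite Coxeter arrangement. When $W$ is finite, $\Cone = V^*$, and unwinding the definitions yields $\Cone(J) = (V^*)^{W_J}$, with $\cH_J$ consisting of the intersections $H \cap (V^*)^{W_J}$ for reflection hyperplanes $H$ of reflections $r \in W \setminus W_J$. Hence the complexified complement identifies with the fixed-point submanifold $\cX_J^\CC = (\cX^\CC)^{W_J}$. By Steinberg's theorem on pointwise stabilisers in finite reflection groups, any $x \in \cX_J^\CC$ satisfies $\Stab_W(x) = W_J$, so the induced action of $N(W,J)$ on $\cX_J^\CC$ is free and the quotient map is a regular covering realising \eqref{eq:ses3}.

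The key geometric input is that $(\Cone(J),\cH_J)$ is itself the reflection arrangement of a finite Coxeter group, namely $N(W,J)$ acting on $(V^*)^{W_J}$. This is a consequence of classical work of Howlett and Lehrer--Springer, refined by Muraleedaran--Taylor, on normalisers of parabolic subgroups of finite reflection groups, and is moreover consistent with the wall-and-chamber description of Theorem \ref{thm:main2v2}. Granted this, Deligne's $K(\pi,1)$ theorem applied to the reflection arrangement of $N(W,J)$ implies that $\cX_J^\CC/N(W,J)$ is a $K(\pi,1)$ space with fundamental group isomorphic to the Artin--Tits group $A_{N(W,J)}$ of the Coxeter system $N(W,J)$, while $\cX_J^\CC$ is a $K(\pi,1)$ for the pure Artin--Tits group $P_{N(W,J)}$.

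The remaining task is to identify $A_{N(W,J)} \cong N(A,J)$ and $P_{N(W,J)} \cong N(P,J)$ compatibly with \eqref{eq:ses3} and \eqref{eq:ses4}. Using Theorem \ref{thm:main2v2} together with the Brink--Howlett presentation of $N(W,J)$, each Brink--Howlett generator (a simple wall crossing in $\BH$) lifts to a canonical positive element of $N_A(A_J)$, whose class in $N(A,J)$ is well-defined. A direct check shows these lifts satisfy the Artin--Tits braid relations corresponding to the groupoid relations in $\BH$, producing a surjection $A_{N(W,J)} \twoheadrightarrow N(A,J)$. Since this surjection is compatible with the canonical projections to $N(W,J)$, the five lemma applied to the induced morphism from \eqref{eq:ses3} to \eqref{eq:ses4} forces it to be an isomorphism, giving also $P_{N(W,J)} \cong N(P,J)$; the $K(\pi,1)$ claim for $N(A,J)$ then follows immediately.

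The principal obstacle is the last step: verifying that the Artin-level lift of Brink--Howlett's presentation really computes $N(A,J)$, i.e.\ that the Artin-type relations among the lifted generators are complete. While this is in the spirit of classical results on parabolic normalisers in Artin--Tits groups (Paris, Godelle, Digne--Michel), the cleanest route is likely to proceed geometrically via Theorem \ref{thm:main2v2}: the Brink--Howlett relations in $\BH$ correspond to loops in $\cX_J^\CC/N(W,J)$, whose Artin lifts give precisely the defining relations for $N(A,J)$ once the $K(\pi,1)$ property is in hand, thereby closing the argument.
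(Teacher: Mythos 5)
Your proposal founders on its ``key geometric input'': the claim that $(\Cone(J),\cH_J)$ is the reflection arrangement of a finite Coxeter group $N(W,J)$ acting on $\Theta_J$. This is false in general, and the paper says so explicitly in the introduction (Iyama--Wemyss note that these intersection arrangements ``are not in general Coxeter arrangements''). Two separate things go wrong: first, $N(W,J)$ need not be a reflection group on $\Theta_J$ --- by Howlett's structure theorem it is in general an extension of a reflection group by a group of diagram-automorphism type, so the ``Coxeter system $N(W,J)$'' and its Artin--Tits group $A_{N(W,J)}$ are not even defined; second, even when $N(W,J)$ does act as a reflection group, the restricted arrangement $\cH_J$ (which contains $H_\alpha\cap\Theta_J$ for \emph{every} $\alpha\in\Phi_J^+$) typically has more hyperplanes than the reflection arrangement of $N(W,J)$. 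What saves the $K(\pi,1)$ statement in the paper is not that the arrangement is Coxeter but that its chambers are simplicial cones, so Deligne's theorem in its full generality (finite simplicial arrangements) applies directly to $\cX_J^\CC$; this is how Corollary \ref{cor:Kpi1} is proved.

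The second gap is the identification $\pi_1(\cX_J^\CC/N(W,J),x)\cong N(A,J)$, which you correctly flag as ``the principal obstacle'' but then propose to close by invoking the $K(\pi,1)$ property for $N(A,J)$ --- which is part of what is being proved, so the argument is circular. The five-lemma step also presupposes a well-defined surjection $A_{N(W,J)}\onto N(A,J)$ whose source, as noted, does not exist in general. The paper's actual route avoids both problems: it takes Godelle's reduced ribbon groupoid $\sR$, whose vertex group is already known to be $N(A,J)$ (Theorem \ref{thm:Artinnormaliser}, due to Godelle--Paris), constructs a functor $\cG:\sD\to\sR$ from the Deligne groupoid of the restricted arrangement sending $\omega_{a,I}\mapsto\mu_{a,I}$, and shows via the lcm/standard-expression comparison (Lemmas \ref{lem:BHuniquelcm} and \ref{lem:stdexp}) that $\cG$ is a covering isomorphic to the quotient covering $\cQ:\sD\to\sD/N(W,J)$. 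The presentation-matching you want is thus done at the level of Garside-theoretic lcms in $A^+$ against geodesics in the arrangement, not by positing an Artin group for $N(W,J)$. You would need to replace both halves of your argument to obtain a correct proof.
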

On the one hand, this theorem provides an algebraic description of the fundamental group of $\cX^\CC_J$ in terms of the pure Artin--Tits group $P$.
On the other hand, it shows that the normaliser quotient $N(A,J)$ also has a finite CW complex as its $K(\pi,1)$, since $\cX^\CC_J/N(W,J)$ is homotopy equivalent to a finite CW complex; see Remark \ref{rmk:finiteCWcomplex}.

The proof of Theorem \ref{thm:main1v2} appears in Section \ref{sec:mainresult} (Theorem \ref{thm:main} and Corollary \ref{cor:Kpi1}).
Our proof uses two groupoids.
The first is the Deligne groupoid $\D$ associated to $(\Cone(J)^\circ,\cH^J)$, which is equivalent to the fundamental groupoid $\pi_1(\cX_J^\CC)$ (see Section \ref{sec:Dgroupoid}). 
The second is the reduced ribbon groupoid $\R$ (associated to $A$ and $J$) introduced by Godelle \cite{Godelle_RibbonII}, which can be thought of as a lift of the Brink--Howlett groupoid $\BH$ from $W$ to $A$ (see Definition \ref{def:reduced_ribbon_groupoid}).
In particular, the vertex group of $\R$ is isomorphic to $N(A,J)$; see Section \ref{sec:reducedribgroupoid} for more details.
We obtain Theorem \ref{thm:main1v2} by proving that $\pi_1(\cX_J^\CC/N(W,J)) \cong \D/N(W,J)$ is equivalent to $\R$.

The relations between the groupoids considered in this paper are summarised by the following commutative diagram (cf. Remark \ref{rem:construction_diagram_global}):
\begin{equation}\label{eq:global_diagram}\begin{tikzcd}[row sep = small]
	{\pi_1(\mathcal{X}_J^{\mathbb{C}},x)} & {\mathscr{D}} && {\widetilde{\mathscr{BH}}} & 1 \\
	\\
	{N(A,J)} & {\mathscr{R}} && {\mathscr{BH}} & {N(W,J)}
	\arrow[squiggly, no head, from=1-2, to=1-1]
	\arrow[from=1-2, to=1-4]
	\arrow["{\cG}", two heads, from=1-2, to=3-2]
	\arrow[squiggly, no head, from=1-4, to=1-5]
	\arrow[two heads, from=1-4, to=3-4]
	\arrow[squiggly, no head, from=3-2, to=3-1]
	\arrow["{A\to W}"', from=3-2, to=3-4]
	\arrow[squiggly, no head, from=3-4, to=3-5]
\end{tikzcd}\end{equation}
The wavy lines indicate that the groupoid $\mathscr{G}$ is equivalent to the group $G$ (or equivalently that any vertex group in $\mathscr{G}$ is isomorphic to $G$). The two vertical arrows are normal groupoid coverings with deck transformation group $N(W,J)$. The top horizontal arrow is a natural functor detailed in Remark \ref{rem:obvious_functor}, and the bottom horizontal arrow is the functor induced by the epimorphism $A\to W$. 

We emphasise  that this whole diagram is defined only when $W$ is finite (since $\R$ is only defined when $W$ is finite), but the functors $\D \to \widetilde{\BH} \to \BH$ are defined in general. Further, we note that the bottom row is essentially due to work of Godelle, and our work relates $\sR$ and $\BH$ directly to the geometrically defined groupoids $\D$ and $\wt{\BH}$.
In Section \ref{sec:example} we work through an explicit example to illustrate this.

Finally, we conclude with a $K(\pi,1)$-conjecture for normaliser quotients of Artin--Tits groups:
\begin{Conjecture}
    Theorem \ref{thm:main1v2} holds for any Coxeter group $W$.
\end{Conjecture}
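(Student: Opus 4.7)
The plan is to mirror the finite-type strategy of Section \ref{sec:mainresult} in the general locally finite setting: extend both the Deligne groupoid $\sD$ associated to $(\Cone(J)^\circ, \cH_J)$ and Godelle's reduced ribbon groupoid $\sR$ beyond their original contexts, construct a groupoid covering $\cG : \sD \to \sR$, and identify it with the topological quotient $\cQ : \sD \to \sD/N(W,J)$. All of the algebraic identifications in Theorem \ref{thm:main1v2}, as well as the $K(\pi,1)$ statement, would then follow as in the finite case.

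The first step is to construct an analogue of $\sD$ for the locally finite arrangement $(\Cone(J)^\circ, \cH_J)$. Although not finite, it is locally finite (Proposition \ref{prop:locallyfinite}), so a combinatorial Deligne groupoid can still be defined by positive galleries modulo the local rank-$2$ relations coming from the finite rank-$2$ subarrangements. One then needs a Deligne--Salvetti-type theorem identifying this combinatorial $\sD$ with $\pi_1(\cX_J^\CC)$; for the classical Tits cone arrangement $(\Cone^\circ,\cH)$ an analogous statement is known through adaptations of Salvetti's complex, and we would aim to push it through for the intersection arrangement. The second step is to define $\sR$ in the general setting so that its vertex groups are $N(A,J)$. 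The template is provided by Brink--Howlett's groupoid $\BH$, whose universal cover $\wt\BH$ is identified in Theorem \ref{thm:main2v2} with the wall-and-chamber structure of $(\Cone(J)^\circ,\cH_J)$. One lifts $\BH$ to an Artin--Tits-level groupoid $\sR$ by replacing each wall crossing with the corresponding positive generator in $A$, subject to braid-type relations coming from the rank-$2$ subarrangements, and verifies that the vertex groups recover $N(A,J)$. With these pieces in place, the covering $\cG : \sD \to \sR$ and its identification with $\cQ$ follow formally from Theorem \ref{thm:main2v2} exactly as in the finite case, yielding $\sR \cong \sD/N(W,J)$ and hence the desired algebraic identifications $N(A,J) \cong \pi_1(\cX_J^\CC/N(W,J))$ and $N(P,J) \cong \pi_1(\cX_J^\CC)$.

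The main obstacle is the $K(\pi,1)$ part of the statement. In the finite case this relies crucially on Deligne's theorem that $\cX^\CC/W$ is $K(\pi,1)$; its analogue for general $W$ is the classical $K(\pi,1)$ conjecture for Artin--Tits groups, which remains open in full generality. We therefore expect the conjecture to split into two stages: an essentially unconditional proof of the algebraic identifications (the first two isomorphisms of Theorem \ref{thm:main1v2}) via the groupoid argument above, and a proof of the $K(\pi,1)$ statement that is conditional on the classical conjecture holding for the ambient $\cX^\CC/W$, with unconditional results in those types (finite, affine, FC, rank $\leq 3$, $4$-cycle-free) where the classical conjecture is already known. Converting $K(\pi,1)$ for $\cX^\CC/W$ into $K(\pi,1)$ for $\cX_J^\CC/N(W,J)$ should proceed via a Salvetti-complex or covering-space comparison between the two complexified complements, and this is where most of the technical work will concentrate.
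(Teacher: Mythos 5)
The statement you are addressing is left as an open \emph{conjecture} in the paper --- the authors prove Theorem \ref{thm:main1v2} only for finite $W$ and explicitly do not claim the general case --- so there is no proof in the paper to compare against, and your proposal is a research outline rather than a proof. The outline is sensible and mirrors the finite-type strategy of Section \ref{sec:mainresult}, but each of its three steps contains a gap that is precisely a reason the authors stopped at a conjecture. First, the identification of the combinatorial arrangement groupoid $\sD$ with $\pi_1(\cX_J^\CC)$ is only known for finite simplicial arrangements (Deligne); the paper itself warns in Section \ref{sec:Dgroupoid} that for the general locally finite arrangement ``the important properties are only proven in the setting of Deligne.'' The Salvetti complex of Remark \ref{rmk:salvetticomplex} gives a presentation of $\pi_1(\cX_J^\CC)$ by galleries modulo rank-$2$ relations, but showing this agrees with the Deligne-type relation (identification of all geodesics) is Deligne's ``property D,'' which is open for infinite arrangements. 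Second, and more seriously, your step of lifting $\BH$ to an Artin--Tits-level groupoid $\sR$ and ``verifying that the vertex groups recover $N(A,J)$'' is itself an open problem: Godelle's Theorem \ref{thm:Artinnormaliser} ($\norm_A(A_J)\cong A_J\rtimes \R_J$) depends on the Garside structure of spherical-type Artin--Tits groups, and for general type even the weaker statement that normalisers are generated by ribbons is conjectural (the paper cites \cite[Conjecture 1]{Godelle_CAT(0)}). The paper is explicit that Section \ref{sec:reducedribgroupoid} is ``where we need to impose the finite-type assumptions.''

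Third, on the $K(\pi,1)$ part: you correctly flag the dependence on the classical conjecture, but your proposed reduction --- a ``covering-space comparison'' between $\cX^\CC/W$ and $\cX_J^\CC/N(W,J)$ --- does not obviously exist. As noted in the paper, $\Cone(J)^\circ$ can be entirely disjoint from $\Cone^\circ$ (e.g.\ when $W_J$ is infinite), so $(\Cone(J)^\circ,\cH_J)$ is not in general a subarrangement or restriction of $(\Cone^\circ,\cH)$, and there is no map of complements to exploit. In the finite case the $K(\pi,1)$ property is obtained by applying Deligne's theorem directly to the intersection arrangement, not by comparison with the ambient one; in general one would need a new $K(\pi,1)$-type theorem for intersection arrangements in the Tits cone, which is a genuinely new open problem rather than a corollary of the classical conjecture. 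In summary, your proposal is a reasonable roadmap that correctly identifies where the difficulties lie, but each of its main steps currently rests on an unproven assertion, so it does not constitute a proof.
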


\subsection*{Acknowledgements}
We thank Juan Gonz\'{a}lez-Meneses, Bob Howlett, Luis Paris and Michael Wemyss for useful discussions.
We also thank the organisers of ParisFest, where part of this work was initiated. We are grateful to an anonymous referee whose careful reading of our manuscript greatly improved it. This work was supported by the Australian Research Council grant DP230100654. The first author was supported by the research project PID2022-138719NA-I00 financed by the Spanish Ministry of Science and Innovation. 
The third author acknowledges the FAMSI collaboration between the Australian National University and CNRS.

\section{Background}
\subsection{Hyperplane arrangements}
Let $V$ be a real vector space.
A hyperplane will always refer to a linear hyperplane (so all hyperplanes pass through the origin).
Throughout this paper, a \textbf{hyperplane arrangement} consists of a convex subset $T \subseteq V$ and a (possibly infinite) set $\cH$ of hyperplanes in $V$ such that $H\cap T \neq \emptyset$ for all $H \in \cH$.
We denote a hyperplane arrangement by the pair $(T,\cH)$.
Given a hyperplane arrangement $(T,\cH)$ and a convex subset $T' \subseteq T$, we can define a restricted hyperplane arrangement in $T'$ by removing from $\cH$ all the hyperplanes that do not intersect with $T'$.
For notation simplicity, we simply denote this arrangement by $(T',\cH)$, with the understanding that only hyperplanes in $\cH$ that intersect $T'$ are considered.
Finally, we say that a hyperplane arrangement is locally finite if for all $x \in T$, there exists an open neighbourhood $U_x$ of $x$ in $T$ such that $U_x$ only intersects finitely many hyperplanes in $\cH$.

Given a hyperplane arrangement $(T,\cH)$, the connected components of $T \setminus \bigcup_{H \in \cH} H$ are called \textbf{chambers}. 
Following \cite[Definition 1.27]{IW}, a \textbf{wall} of a chamber $C$ is the intersection of $\overline{C}$ with a hyperplane $H \in \cH$ that has codimension $1$.
(Note that the support of a face is called a wall in \cites{Bourbaki_4-6, Del72, Paris_Kpi1}, which we have reserved for a different usage above.)

\subsection{Coxeter and Artin--Tits groups}In this section we give general notation and results which will be used throughout the paper.

Let $\Gamma$ be a Coxeter diagram with vertex set $\Gamma_0$. 
Let $W$ denote the associated Coxeter group, with the set of simple generators $S = \{s_i \mid i \in \Gamma_0\}$, so that $(W,S)$ is a Coxeter system. 
We say that $\Gamma$ is \textbf{spherical} (or finite-type), if $W$ is finite. Note that we only impose the spherical-type assumption in Section \ref{sec:reducedribgroupoid} and \ref{sec:mainresult}.

We denote the simple roots of $\Gamma$ by $\{\alpha_i \mid i \in \Gamma_0\}$, 
and let $V = \bigoplus_{i \in \Gamma_0}\RR\alpha_i$ be the standard geometric representation of $W$.
The set of roots $\Phi = \{w \cdot \alpha_i \mid w \in W, i\in \Gamma_0\}$ decomposes into positive and negative roots, which we denote by $\Phi^+$ and $\Phi^-$ respectively. For $x \in W$, we let $\ell(x)$ denote the length of $x$ with respect to the simple generators. That is $\ell(x) = |\{\alpha \in \Phi^+ \mid w\cdot \alpha \in \Phi^- \}|$. A \textbf{left-divisor} of $w \in W$ is an element $u \in W$ such that $u \leq w$ with respect to the left weak Bruhat order on $W$.

Following \cite{IW}, we use $\Theta=V^*$ to denote the contragredient representation of $W$ over $\RR$. 
Recall that $\Theta$ realises $W$ as a reflection group in the sense of Vinberg \cite{Vinberg_reflectiongroup}.
For $\alpha \in \Phi$, consider the hyperplanes:
\begin{align*}
    H_\alpha &= \{ \varphi\in \Theta \mid \varphi(\alpha)=0 \},
\end{align*}
and let $\cH=\{H_\alpha \mid \alpha \in \Phi \}$ denote the set of (real) hyperplanes. 
 
We will be particularly interested in hyperplane arrangements and their associated complexified complements. To define these, first consider the open fundamental chamber given by $$C = \{ \varphi \in \Theta \mid \varphi(\alpha_i) > 0 \text{ for all } i \in \Gamma_0 \}.$$
The \textbf{Tits cone} is defined as 
\[
\Cone \coloneqq \bigcup_{w \in W} w\cdot \overline{C}.
\]
Let $\Cone^\circ$ denote the interior of $\Cone \subseteq \Theta$. 
The pair $(\Cone^{\circ},\cH)$ is a locally-finite hyperplane arrangement (see also Remark \ref{rem:locallyfinite}). Note that $W$ is finite if and only if $\Cone= \Theta$.

\begin{Notation}\label{not:complexified_complement}
We will  refer to the space
\begin{equation} \label{eqn:complexfiedTitscone}
\cX^\CC \coloneq \Big(\Cone^\circ \times \Cone^\circ\Big) \setminus \bigcup_{H \in \cH}H\times H.
\end{equation}
as the \textbf{complexified hyperplane complement} of the arrangement $(\Cone^\circ,\cH)$. This slight abuse of terminology is justified by the fact that when $W$ is finite, we have
\begin{equation}\label{eq:XCC}
    \cX^\CC \cong \Big(\Theta \otimes_\RR \CC\Big) \setminus \bigcup_{H \in \cH} H \otimes_\RR \CC.
\end{equation}
and $\cX^\CC$ is then the actual complexified complement of $(\Cone^\circ,\cH)=(\Theta,\cH)$.
\end{Notation}

In general,  the action of $W$ on $\cX^\CC$ is free and properly discontinuous, and the normal covering map $\cX^\CC \to \cX^\CC/W$ induces the following short exact sequence (where $Z\in \cX^\CC$):
\begin{equation} \label{eqn:classicalSES}
1 \to \pi_1(\cX^\CC,Z) \to \pi_1(\cX^\CC/W,Z) \xrightarrow{\pi} W \to 1.
\end{equation}

We define the \textbf{Artin--Tits group} $A \coloneq \pi_1(\cX^\CC/W,Z)$, and the \textbf{pure Artin--Tits group} is $P\coloneq\pi_1(\cX^\CC,Z)$. Note that the surjection $\pi:A \to W$ admits a set-theoretic section $w \mapsto \sigma_w$, the \textbf{positive lift} of $w$. We note that $A$ also has a presentation with Artin generators denoted $\sigma_i:=\sigma_{s_i},\; i \in \Gamma_0$. We won't be explicitly using the presentation here, and refer the reader to \cite{VdL_thesis} for more details. 

\subsection{Normalisers of parabolic subgroups}We keep the notation from the previous section, and we fix a subset $J$ of $\Gamma_0$ for this section. 

Let $W_J\coloneq\langle s_j \mid j \in J\rangle$ (resp. $A_J\coloneq\langle \sigma_j \mid j \in J\rangle$) denote the standard parabolic subgroup of $W$ (resp. of $A$) attached to $J$. Let $\Phi_J$ be the subset of roots of coming from $W_J$:
$$
\Phi_J \coloneqq \{ w \cdot \alpha_j \mid j \in J,\; w \in W_J \} \subseteq \Phi,
$$
and denote $\Phi_J^\pm = \Phi_J \cap \Phi^\pm$. 

\begin{Notation}\label{notation:longest_element}
Let $J \subseteq \Gamma_0$ be such that $W_J$ is finite. The group $W_J$ contains a unique longest element for the length function $\ell$. We denote it by $w_J \in W_J$, and we denote by $\Delta_J \in A_J$ its positive lift to the Artin--Tits group. The element $w_J$ induces a bijection 
\begin{equation} \label{eqn:diagraminvolution}
\iota_J:J \xrightarrow{\equiv} J
\end{equation}
defined by the formula $w_J\cdot\alpha_j = -\alpha_{\iota(j)}$.  
\end{Notation}

\begin{Definition}\label{def:associate}
Let $\Gamma$ be a Coxeter diagram. An \textbf{associate} of $J\subseteq \Gamma_0$ is a subset $I \subseteq \Gamma_0$ such that $W_I$ and $W_J$ are conjugate in $W$. 
\end{Definition}

The normalisers of parabolic subgroups and their quotients will be of primary interest to us, and we recall the definitions of $N(W,J), N(A,J)$ and $N(P,J)$ from \eqref{def:N(W,J)_and_N(A,J)} and \eqref{def:N(P,J)}.



For later use, we record here an easy lemma relating the groups above.
\begin{Lemma}\label{lem:PAW}
    The map $\pi:A \to W$ induces a map $\ol{\pi}: N(A,J) \to N(W,J)$ giving the short exact sequence
    \begin{equation}\label{eq:ses2}
1 \to N(P,J) \to N(A,J) \xrightarrow{\ol{\pi}} N(W,J) \to 1.
\end{equation}
\end{Lemma}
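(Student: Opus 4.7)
The plan is to analyse the homomorphism $\bar{\pi}: N(A,J) \to N(W,J)$ induced by $\pi$, verifying that it is well-defined, surjective, and has kernel exactly $N(P,J)$. The first part is immediate: since $\pi(A_J) = W_J$, the map $\pi$ sends $\norm_A(A_J)$ into $\norm_W(W_J)$ and descends to a well-defined homomorphism on quotients.

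For surjectivity, I would use the structure of minimal-length coset representatives in $W/W_J$. Given $wW_J \in N(W,J)$, replace $w$ by the unique minimal-length representative of its coset, so that $w \cdot \alpha_j \in \Phi^+$ for every $j \in J$. Since $w$ also normalises $W_J$, conjugation by $w$ permutes the root subsystem $\Phi_J$; combined with positivity, this forces $w \cdot \alpha_j = \alpha_{\tau(j)}$ for some bijection $\tau: J \to J$ (an automorphism of a root system that preserves positive roots must permute the simples). Therefore $w s_j = s_{\tau(j)} w$ with $\ell(w s_j) = \ell(s_{\tau(j)} w) = \ell(w)+1$, and the positive lifts of both sides agree, yielding $\sigma_w \sigma_j \sigma_w^{-1} = \sigma_{\tau(j)}$ in $A$. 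Hence $\sigma_w \in \norm_A(A_J)$ and $\bar{\pi}(\sigma_w A_J) = wW_J$.

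The kernel computation is a direct unpacking of definitions. If $\beta A_J \in \ker \bar{\pi}$, then $\pi(\beta) \in W_J$, and surjectivity of $\pi|_{A_J}: A_J \to W_J$ produces $a \in A_J$ with $\pi(a) = \pi(\beta)$; then $\beta a^{-1} \in P \cap \beta A_J$ witnesses $\beta A_J \in N(P,J)$. Conversely, any $\gamma \in P \cap \beta A_J$, written as $\gamma = \beta a$ with $a \in A_J$, yields $\pi(\beta) = \pi(a)^{-1} \in W_J$. Checking that $N(P,J)$ is itself a subgroup of $N(A,J)$ is a short manipulation: given witnesses $\gamma_i = \beta_i a_i \in P$, one rewrites $\gamma_1 \gamma_2 = \beta_1 \beta_2 (\beta_2^{-1} a_1 \beta_2) a_2$, which lies in $\beta_1\beta_2 A_J \cap P$ because $\beta_2$ normalises $A_J$, and inverses are handled analogously.

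The main obstacle is the surjectivity step, and specifically the root-theoretic fact that a minimal-length representative of a coset in $\norm_W(W_J)/W_J$ must permute the simple roots of $J$. Once that is in place, the required lift to the Artin--Tits group is a standard consequence of length-additivity for positive lifts, and all remaining verifications reduce to routine coset bookkeeping.
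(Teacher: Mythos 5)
Your proof is correct, and the kernel computation is essentially identical to the paper's: the paper takes $w=\pi(\beta)\in W_J$ and observes $\beta\sigma_w^{-1}\in P\cap\beta A_J$, which is exactly your argument with $a=\sigma_w$. The one substantive difference is that you also prove surjectivity of $\ol{\pi}$ explicitly, via the standard fact that the minimal-length representative $w$ of a coset in $\norm_W(W_J)/W_J$ permutes the simple roots $\{\alpha_j\}_{j\in J}$, so that length-additivity of positive lifts gives $\sigma_w\sigma_j\sigma_w^{-1}=\sigma_{\tau(j)}$ and hence $\sigma_w\in\norm_A(A_J)$; the paper's proof addresses only exactness at $N(A,J)$ and leaves surjectivity implicit (it is recovered later from the groupoid machinery, e.g.\ the identification $N_J\cong N(W,J)$ and the generators $\mu_{a,I}$ lifting the $v_{a,I}$). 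Your root-theoretic step --- that a minimal-length coset representative normalising $W_J$ sends $\{\alpha_j\}_{j\in J}$ to itself --- is the same fact the paper proves in Proposition~\ref{prop:hominWCandBH}\eqref{item:WCtoBH} by a non-negative-matrix argument, so nothing in your write-up is circular or missing.
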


\begin{proof}
Since $\pi$ is surjective, and since $\pi(A_J)=W_J$, we can consider the restricted morphism $\mathrm{Norm}_A(A_J)\to \mathrm{Norm}_W(W_J)$, along with its composition with the projection map $\mathrm{Norm}_W(W_J)\to N(W,J)$. The kernel of this composed morphism contains $A_J$ and we obtain an induced morphism $\ol{\pi}:N(A,J)\to N(W,J)$.

Let $\beta A_J \in N(A,J)$ and suppose $\beta A_J \in \ker(\ol{\pi})$, i.e. $\ol{\pi}(\beta A_J) = W_J$. We have $w \coloneq \pi(\beta) \in W_J$, and hence $\beta A_J = \beta \sigma_w^{-1} A_J$. We also have that $\beta \sigma_w^{-1}$ lies in the pure Artin--Tits group $P$. This shows that $\beta A_J \in N(P,J)$ (see \eqref{def:N(P,J)}), and hence $\ker(\ol{\pi}) \subseteq N(P,J)$. The other inclusion is clear. \end{proof}

\subsection{Groupoid coverings}\label{sec:groupoid_coverings}
The theory of groupoid coverings is a direct translation of the classical theory of covering spaces (see e.g.\ \cite[Section 10.2]{brown_groupoids}), and we recall the basics here. 

A \textbf{groupoid} $\sG$ is a category in which every morphism is invertible. For us, all groupoids are assumed to be connected and small. For $x \in \sG$, we let $\sG_x \coloneq \End_\sG(x)$ denote the vertex group of $\sG$ at $x$. Up to isomorphism, $\sG_x$ is independent of the choice of $x$. We mention here that we will always compose morphisms in categories and groupoids from left to right.

A functor of groupoids $\mathcal{F}:\tilde{\mathscr{G}} \to \mathscr{G}$  is a \textbf{groupoid covering} if  any morphism in $\mathscr{G}$ has a unique lift in the following sense: given $\alpha \in \Hom_{\mathscr{G}}(x,y)$ and  $\tilde{x} \in \cF^{-1}(x)$, there exist unique $\tilde{y} \in \tilde{\sG}$ and $\tilde{\alpha} \in \Hom_{\tilde{\mathscr{G}}}(\tilde{x}, \tilde{y})$ such that $\mathcal{F}(\tilde{\alpha})=\alpha$. The covering $\cF$ is \textbf{universal} if the vertex group of $\tilde{\sG}$ is trivial. Note that this is equivalent to $\Hom_{\tilde{\sG}}(\tilde{x}, \tilde{y})$ being a singleton for all objects in $\tilde{\sG}$.

The \textbf{deck transformation group} $\Deck(\cF)$ consists of automorphisms $\varphi :\tilde{\sG} \to \tilde{\sG}$ such that $\cF = \cF\circ \varphi$. (Note here that we require actual equality of functors, and not just isomorphisms.)
There is a natural action of $\Deck(\cF)$ on the fibers $\cF^{-1}(x)$, and if this action is transitive (for some, hence any, choice of $x$), we say that $\cF$ is a \textbf{normal cover}. 
In this case, for any $x \in \sG$ and $\tilde{x} \in \cF^{-1}(x)$, we have a morphism $\sG_x \to \Deck(\cF)$ which fits into a short exact sequence
\begin{align}\label{eq:sesfornormalcover}
1\to \tilde{\sG}_{\tilde{x}} \to \sG_x \xrightarrow{\psi} \Deck(\cF) \to 1,
\end{align}
where for each $\alpha \in \sG_x$, the deck transformation $\psi(\alpha)$ sends $\tilde{x}$ to the end point of the lifted path $\tilde{\alpha}$.

An action of a group $N$ on $\sG$ consists of a compatible pair of actions of $N$ on the set of objects and the set of morphisms of $\sG$ (i.e. the action is a map $N \to \Aut(\sG)$ into the group of automorphisms of $\sG$). We let $[x],[\alpha]$, etc.\ denote the equivalence classes of objects and morphisms in $\sG$ with respect to the action of $N$.
Suppose we are given a  group $N$ acting on $\sG$ such that the action is free on the objects. Then we define the \textbf{quotient category} $\sG/N$ as follows. The objects of $\sG/N$ are the equivalences classes of objects of $\sG$ under the action of $N$. The morphisms are given by
\[\Hom_{\sG/N}([x],[y]) \coloneq \Big\{ [\alpha] \;\mid\; \exists g,g' \in N,\; \alpha\in \Hom_{\sG}(g\cdot x,g'\cdot y) \Big\}.\]
Note that since $N$ acts freely on the objects of $\sG$, a morphism $[\alpha]\in \Hom_{\sG/N}([x],[y])$ admits a unique representative in $\sG$ whose source is $x$. 

Composition in $\sG/N$ is given in the following way: Consider $[\alpha],[\beta]$ two morphisms in $\sG/N$ such that the source of $[\beta]$ is the target of $[\alpha]$. Again, there is a unique representative $\beta'$ of $[\beta]$ whose source in $\sG$ is the target of $\alpha$. We can then define $[\alpha]\circ [\beta]$ to be $[\alpha \circ \beta']$. One easily checks that this definition is independent of the choice of representative, and that $\sG/N$ is again a groupoid. Moreover, the natural quotient $\cQ:\sG\to \sG/N$ is a normal covering with $\Deck(\cQ)\cong N$, and we have a short exact sequence
$
1 \to \sG_x \to (\sG/N)_{[x]} \to N \to 1.
$

Let $\cX$ be a (path connected, locally path connected and semilocally simply-connected) space with $N$ acting freely and properly discontinuously.
Then $\cX \to \cX/N$ is a normal (i.e.\ Galois) covering, inducing a groupoid covering on their fundamental groupoids $\pi_1(\cX) \to \pi_1(\cX/N)$.
On the other hand, the action of $N$ also induces an action on the groupoid $\pi_1(\cX)$, to which we can consider the quotient category $\pi_1(\cX)/N$.
It follows from the definition that $\pi_1(\cX/N) = \pi_1(\cX)/N$, and both groupoid coverings $\pi_1(\cX) \to \pi_1(\cX/N)$ and $\pi_1(\cX) \to \pi_1(\cX)/N$ are the same.


Finally, we recall that a \textbf{groupoid completion} of a small category $\sG^+$ is the category $\sG$ obtained by formally inverting all morphisms. The groupoid completion is equipped with a canonical functor $\kappa:\sG^+ \to \sG$ satisfying the following universal property: every  functor from $\sG^+$ to a groupoid factors through $\kappa$.

\section{Tits cone intersection}\label{sec:tits_cone_intersection}
In this section, we recall the definition of Iyama--Wemyss' Tits cone intersection and its associated combinatorics following \cite{IW}.
We also prove that the hyperplane arrangement in its interior is locally-finite, and study the action of the normaliser quotient (see \eqref{def:N(W,J)_and_N(A,J)}) on the Tits cone and its associated complexified hyperplane complement.

Throughout this section, we fix a Coxeter diagram $\Gamma$ with associated Coxeter group $W$. We also fix a subset $J$ of the vertex set $\Gamma_0$ and we otherwise keep the notation from the last section.

\subsection{Basic construction} \label{sec:Jdef}
    To declutter notation, for a set $X$ and an element $a$ we sometimes write:
    \begin{align*}
        X+a &\coloneq X \cup \{a\},\\
        X-a &\coloneq X \setminus \{a\}.
    \end{align*}
    For any subset $I \subseteq \Gamma_0$ we let $I^c\coloneq \Gamma_0 \setminus I$.
    Define the following:
    \begin{align}
        \Theta_I &\coloneqq \{\varphi \in \Theta \mid \varphi(\alpha_i) = 0 \text{ for all } i \in I\}, \\
        C_I &\coloneqq \{ \varphi \in \Theta_I \mid \varphi(\alpha_j) > 0 \text{ for all } j \in I^c \}.
    \end{align}
    The \textbf{Tits cone intersection} of $J$, or the $J$-cone, is defined as 
    \[
    \Cone(J) \coloneqq \Cone \cap \Theta_J,
    \]
    with its interior in $\Theta_J$ (not in $\Theta$) denoted by $\Cone(J)^\circ$.
    The set of $J$\textbf{-roots} is defined as 
    \begin{equation} \label{eqn:J-roots}
    \Phi^J \coloneqq \{ \alpha \in \Phi \mid \Theta_J \not\subseteq H_\alpha \} = \Phi \setminus \Phi_J.
    \end{equation}
    The decomposition $\Phi = \Phi^+ \sqcup \Phi^-$ into positive and negative roots restricts to a decomposition of $\Phi^J$ into positive and negative $J$-roots, which we denote by $\Phi^J_+$ and $\Phi^J_-$ respectively.
    We consider the following set of hyperplanes in $\Theta_J$:
    \[
    \cH^J \coloneqq \{H \cap \Theta_J \mid H \in \cH: \Theta_J \not\subseteq H\} = \{H_\alpha \cap \Theta_J \mid \alpha \in \Phi^J_+ \}.
    \]
    When the context is clear, we will simply refer to the hyperplanes in $\cH^J$ as $H_\alpha$ (instead of $H_\alpha \cap \Theta_J$).
    Note that $\cH^J$ is not necessarily in bijection with the set of positive $J$-roots, as we might have $H_\alpha \cap \Theta_J = H_{\alpha'}\cap \Theta_J$ for distinct positive $J$-roots $\alpha$ and $\alpha'$; see Example \ref{ex:A3.1} and Section \ref{sec:example}.

    Note that $\Cone(J)$ is a convex cone (it is the intersection of two convex cones), and it has the following natural stratification:
    \begin{equation} \label{eqn:JTitsconedecomposition}
        \Cone(J) = \bigsqcup_{L \subseteq \Gamma_0} \bigsqcup_{\substack{x \in W/W_L \\ x \cdot C_L \subseteq \Theta_J}} x \cdot C_L.
    \end{equation}
    In terms of the hyperplane arrangement $(\Cone(J),\cH^J)$, the chambers are $x\cdot C_L \subseteq \Theta_J$ with $|L|=|J|$.
    The faces of each chamber $x\cdot C_L$ are $x\cdot C_{L+i}$ for each $i \in \Gamma_0 - L$.
    Each face $x\cdot C_{L+i}$ is supported on the hyperplane $H_{x\cdot \alpha_i}$, where $\overline{x\cdot C_{L+i}} = H_{x\cdot \alpha_i} \cap \overline{x\cdot C_L}$ is the wall. Notice that we have a similar stratification of $\Cone$ (the entire Tits cone) corresponding to the case $J=\varnothing$.
    
    In general, the hyperplane arrangement $(\Cone(J),\cH^J)$ need not be locally finite -- this is already the case in the classical setting, i.e.\ for an infinite Coxeter group $W$ with $J = \emptyset$ (there are infinitely many hyperplanes and the origin is in $\Cone(\emptyset) = \Cone$).
    Analogous to the classical case, we show that the hyperplane arrangement in its interior: $(\Cone(J)^\circ, \cH^J)$, is a locally-finite hyperplane arrangement.

    To do so we need the following lemma, which is a special case of \cite[Lemma 9]{Vinberg_reflectiongroup}. 
    We included a proof in our setting for the convenience of the reader.
    Note that we say a hyperplane $H$ \textit{separates} two points $f$ and $g$ in $\Cone$ if $f$ and $g$ live on different half spaces, i.e.\ $f$ and $g$ are in different connected components of $\Cone \setminus H$. 
    \begin{Lemma}\label{lem:two_points_separated_finitely_many_hyperplanes}
        Let $f,g \in \Cone$.
        Then the number of hyperplanes in $\cH$ separating $f$ and $g$ is finite.
    \end{Lemma}
    \begin{proof}
        Let $\overline{C} \coloneqq \bigsqcup_{I \subseteq \Gamma_0} C_I$ (the closure of $C=C_\emptyset$ in $\Cone$).
        Since $\overline{C}$ is a fundamental domain for the action of $W$, we may assume without loss of generality that $f \in \overline{C}$ and $g \in w\overline{C}$ for some $w \in W$.
        
        Consider the line segment in $\Cone$ connecting $f$ and $g$ (this is possible since $\Cone$ is convex): 
        \[
        [f,g] \coloneqq \{(1-t)f + tg \mid t\in [0,1] \} \subseteq \Cone.
        \]
        The statement of the lemma is equivalent to having only finitely many hyperplanes intersecting the open line segment $(f,g) \coloneqq [f,g]\setminus\{f,g\}$ without completely containing it (i.e.\ each intersection is a point). 

        We first show that for each $h \in (f,g)$, the set
        \[
        \{H_\alpha \in \cH \mid H_\alpha\cap(f,g) = \{h\}\}
        \]
        is a finite set.
        If $f=g$, then there is nothing to show, so we may assume that $f\neq g$.
        Let $H_{\alpha}$ be a hyperplane containing $h$ with $\alpha \in \Phi^+$ (we can always pick $\alpha$ to be a positive root).
        If $f(\alpha)=0$, then $f$ is also in $H_{\alpha}$ and so $H_{\alpha}$ would contain $(f,g)$.
        Since $f(\alpha)\geq 0$, it follows that 
        \begin{align*}
            \left\{H_\alpha \mid H_\alpha\cap(f,g) =\{h\}\right\}
                &= \{H_\alpha \mid f(\alpha) >0, h(\alpha)=0\} \\
                &= \{H_\alpha \mid g(\alpha)<0, h(\alpha)=0\};
        \end{align*}
        the second equality comes from the fact that $(f,g)$ is an open line segment containing $h$.

        Recall that $g \in w\overline{C}$.
        It follows that $g(\alpha)<0$ if and only if $w^{-1}\cdot \alpha \in \Phi^-$.
        Indeed, $w^{-1}\cdot g \in \overline{C}$ and so $(w^{-1}\cdot g)(w^{-1}\alpha) = g(\alpha) < 0$ if and only if $w^{-1}\cdot\alpha \in \Phi^-$.
        Thus we get that
        \[
        |\{H_\alpha \mid H_\alpha \cap (f,g) = \{h\}\}| \leq |\{ \alpha\in \Phi^+ \mid w^{-1}\cdot \alpha \in \Phi^-\} | = \ell(w^{-1}) < \infty
        \]
        (see e.g.\ \cite[Proposition 4.4.4]{BB_combinatoricsCox} for the equality above).

        We complete the proof by showing that the set
        \[
        \mathfrak{I}_{f,g} \coloneqq \{h \in (f,g) \mid \exists H \in \cH: H\cap[f,g]=\{h\}\}
        \]
        is finite.
        We do this via induction on $\ell(w)$ (where $g \in w\overline{C}$).
        If $\ell(w)=0$, then $f$ and $g$ are both in $\overline{C}$ and it is clear that $\mathfrak{I}_{f,g}$ is empty and hence finite.
        
        We may now assume that $w$ is nontrivial, so that $g\in w\overline{C}\neq \overline{C}$.
        In particular, the following set is non-empty:
        \[
        I \coloneqq \{ i \in \Gamma_0 \mid g(\alpha_i)<0 \}.
        \]
        Define the point $h \in [f,g]$ to be such that $[f,h] = [f,g] \cap \overline{C}$.
        Since $h$ belongs to $\overline{C}$, we have that $h(\alpha_i)\geq 0$ for all $i \in \Gamma_0$.
        We claim that there exists $i \in I$ such that $h(\alpha_i)=0$.
        Indeed, if $h(\alpha_i)>0$ for all $i \in I$, then we would be able to extend $[f,h]$ to some $[f,h'] \supset [f,h]$ such that $[f,h'] \subseteq \overline{C}$, which contradicts the construction of $[f,h]$. 
        
        Let $i \in I$ be such that $h(\alpha_i)=0$.
        Since $g(\alpha_i)<0$ and $g \in w\overline{C}$, we get as before that $w^{-1}\cdot \alpha_i \in \Phi^-$, so that $\ell(s_iw) =\ell(w^{-1}s_i) < \ell (w)$ (see e.g.\ \cite[Proposition 4.4.6]{BB_combinatoricsCox}).
        We can then consider the line segment $s_i\cdot[h,g] = [h,s_i\cdot g]$, with $s_i\cdot g \in (s_iw)\overline{C}$.
        The induction hypothesis applies to both line segments $[h,s_i\cdot g]$ and $[f,h]$.
        Since $|\mathfrak{I}_{h,g}| = |\mathfrak{I}_{h,s_i\cdot g}|$, we conclude that $\mathfrak{I}_{f,g}$ is a finite set.
\end{proof}
    
    \begin{Proposition} \label{prop:locallyfinite}
        Every point $u \in \Cone(J)^\circ$ is contained in an open ball in $\Cone(J)$ that intersects only finitely many hyperplanes in $\cH^J$. In particular, $(\Cone(J)^\circ, \cH^J)$ is a locally-finite hyperplane arrangement.
    \end{Proposition}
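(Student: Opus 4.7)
The plan is to reduce local finiteness of $\cH_J$ in $\Theta_J$ to classical local finiteness results, either for the classical Coxeter arrangement $(\Cone^\circ, \cH)$ in $\Theta$ (applicable when $\Stab_W(u)$ is finite), or for the Vinberg-type reflection arrangement of $N(W, J)$ on $\Theta_J$ (needed in the general case).

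First, by the stratification \eqref{eqn:Titsconedecomposition}, $u \in x \cdot C_L$ for some $(x, L)$ with $x \cdot C_L \subseteq \Theta_J$, and $W_J \subseteq x W_L x^{-1} = \Stab_W(u)$. When $\Stab_W(u)$ is finite (which is forced whenever $W_J$ is finite and $u$ sits appropriately in $\Cone(J)^\circ$, e.g.\ when the stratum $x \cdot C_L$ is top-dimensional in $\Theta_J$, so that $|L|=|J|$ and $x V_L = V_J$, forcing $x W_L x^{-1}=W_J$), the point $u$ lies in the classical Tits cone interior $\Cone^\circ$ by the standard characterisation in terms of finite stabilisers. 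The classical local finiteness of $(\Cone^\circ, \cH)$, see e.g.\ \cite[Ch.~V, \S4]{Bourbaki_4-6}, then produces an open ball $B \subseteq \Theta$ around $u$ meeting only finitely many hyperplanes of $\cH$. Intersecting with $\Theta_J$ yields an open ball $B \cap \Theta_J$ in $\Theta_J$ around $u$ meeting only finitely many $H_\alpha \cap \Theta_J \in \cH_J$, completing the proof in this case.

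The main obstacle is the general case in which $\Stab_W(u)$ may be infinite (for instance when $W_J$ is infinite, or when $u$ carries unavoidable extra symmetry in the $\Theta_J$-direction), so that $u \in \partial \Cone$ and infinitely many $H_\alpha$ may accumulate near $u$ in $\Theta$; the above reduction then breaks. Here the restriction to $\Theta_J$ becomes essential: hyperplanes $H_\alpha$ whose root $\alpha$ is a root of the parabolic $W_J$ entirely contain $\Theta_J$ and are excluded from $\cH_J$ by the very definition of $\Phi^+_J$, and one must show that the remaining restricted hyperplanes give only finitely many distinct elements in a small ball around $u$ in $\Theta_J$. I would attempt this by realising $(\Cone(J)^\circ, \cH_J)$ as the Tits cone interior and reflection arrangement of a Vinberg-type reflection group associated to $N(W, J)$ (a structure developed in Iyama--Wemyss \cite{IW} and, judging from the introduction, elaborated further in later sections of the paper), so that local finiteness at $u$ becomes a consequence of Vinberg's classical local finiteness theorem \cite{Vinberg_reflectiongroup} applied to the $N(W,J)$-reflection arrangement on $\Theta_J$.
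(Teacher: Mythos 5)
Your first case (finite stabiliser, hence $u \in \Cone^\circ$, then restrict a classical ball to $\Theta_J$) is fine, but it only covers a special situation: since $W_J$ fixes $\Theta_J$ pointwise, $W_J \subseteq \Stab_W(u)$ for every $u \in \Cone(J)^\circ$, so whenever $W_J$ is infinite your second case is not an edge case but the \emph{only} case (indeed $\Cone(J) \cap \Cone^\circ = \emptyset$ then, as the paper's remark after the proposition points out). And for that second case you do not give a proof --- you propose to realise $(\Cone(J)^\circ,\cH_J)$ as the Tits cone and mirror arrangement of a Vinberg reflection group attached to $N(W,J)$ and invoke Vinberg's local finiteness for that group. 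This strategy is not available: Iyama--Wemyss emphasise that these intersection arrangements are in general \emph{not} Coxeter (reflection) arrangements, the hyperplanes in $\cH_J$ need not be mirrors of any reflections in $N(W,J)$ acting on $\Theta_J$, and $N(W,J)$ need not even act transitively on the chambers (see the remark after Lemma \ref{lem:normaliseraction}). So the key step of your argument rests on a structure that does not exist, and this is a genuine gap.

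The paper's proof avoids any such identification and stays entirely inside the ambient arrangement $(\Cone,\cH)$. After reducing to showing that only finitely many hyperplanes of $\cH_J$ pass through a given $u \in \Cone(J)^\circ$, it picks two points $v,v'$ in chambers of $(\Cone(J),\cH_J)$ such that the segment $[v,v']$ passes through $u$ (possible because $u$ is interior in $\Theta_J$ and $\Cone(J)$ is convex), and then applies Vinberg's Lemma~9 to $W$ acting on $\Theta$: a segment with endpoints in the Tits cone meets only finitely many mirrors that do not contain both endpoints. Since no hyperplane of $\cH_J$ contains $v$ or $v'$, only finitely many of them can pass through $u$. If you want to salvage your approach, replace the appeal to a hypothetical $N(W,J)$-reflection structure by this segment argument in the ambient Tits cone; the finite-stabiliser dichotomy then becomes unnecessary.
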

    \begin{proof}
        Firstly, note that the interior of a convex space in a finite dimensional vector space is convex, so $\Cone(J)^\circ$ is convex.

        Let $u$ be an element of $\Cone(J)^\circ$. Seeing $u$ as an element of $\Cone$, we can consider a neighbourhood of $u$ in $\Cone$ which intersects only the hyperplanes in $\cH$ containing $u$. For instance, take the union of the strata of the natural stratification of $\Cone$ (from \eqref{eqn:JTitsconedecomposition} with $J=\emptyset$) whose closure contains $u$. We can thus consider a small enough open ball $B$ centred at $u$ so that $B\cap \Cone$ only intersects the hyperplanes in $\cH$ containing $u$. Since $u$ lies in the interior of $\Cone(J)$ in $\Theta_J$, we can also assume (by shrinking $B$ if necessary) that $B\cap \Theta_J$ is an open ball in $\Cone(J)$ that contains $u$.

        We will show that only finitely many hyperplanes in $\cH^J$ intersect $B\cap\Theta_J$ by constructing two points $v$ and $v'$ in $B \cap \Theta(J)$ such that (i) they touch no hyperplanes in $\cH^J$, and (ii) their line segment $[v,v']$ contains $u$ as an interior point. 
        From this, it follows that any $H \in \cH$ satisfying $H \cap \Theta_J \in \cH^J$ and $u \in H$ has to separate $v$ and $v'$ in $\Cone$.
        Since all hyperplanes in $\cH^J$ are of the form $H\cap \Theta_J$, it follows from Lemma \ref{lem:two_points_separated_finitely_many_hyperplanes} that only finitely many hyperplanes in $\cH^J$ intersect $u$ (even though there may be infinitely many hyperplanes in $\cH$ intersecting $u$).
        By the construction of $B$, all hyperplanes in $\cH$ intersecting $B$ must intersect $u$, so we get that $B\cap \Theta_J$ only intersects finitely many hyperplanes in $\cH^J$ as desired.
        
        We now provide the construction of such $v$ and $v'$. 
        By looking at the stratification \eqref{eqn:JTitsconedecomposition}, we see that the complement of all the hyperplanes in $\cH^J$ in $\Cone(J)$ is dense in $\Cone(J)$. 
        This allows us to choose some point $v$ in the open ball $B\cap \Theta_J$ which lies on no hyperplane in $\cH^J$. Being an open ball centred at $u$, $B\cap \Theta_J$ contains the symmetric of $v$ relative to $u$ defined by $v'\coloneq u+(u-v)$. 
        Note that if $v'$ is contained in some hyperplane $H \cap \Theta_J\in \cH^J$, then the hyperplane $H$ intersects $B$ and hence contains $u$ (by construction of $B$). We then have that $H \cap\Theta_J$ also contains $v=u+(u-v')$, which contradicts our choice of $v$. As such, both $v$ and $v'$ are points in $B\cap \Theta_J$ which lie on no hyperplane of $\cH^J$. 
        It is clear from our construction that the segment $[v,v']$ passes through $u$, hence $v$ and $v'$ satisfy both of the desired properties, which completes the proof.
    \end{proof}

    \begin{Remark}\label{rem:locallyfinite}
        While $\Cone(J) \subseteq \Cone$ by definition, note that $\Cone(J)$ (and hence $\Cone(J)^\circ$) can be completely disjoint from $\Cone^\circ$. For example, if $J$ is such that $W_J$ is an infinite parabolic subgroup, then $\Cone(J) \cap \Cone^\circ = \emptyset$.
        In particular, the arrangement $(\Cone(J)^\circ,\cH^J)$ need not be a restriction from $(\Cone^\circ,\cH)$, so the locally-finiteness of $(\Cone(J)^\circ,\cH^J)$ does not immediately follow from the locally-finiteness of $(\Cone^\circ, \cH)$.
        (In fact, our proof of Proposition \ref{prop:locallyfinite} also applies to $(\Cone^\circ, \cH)$ by choosing $J=\emptyset$.)
    \end{Remark}

    \begin{Remark} \label{rmk:comparingtwoarrangements}
    Since chambers are open subsets, it is clear that the set of chambers in $(\Cone(J),\cH^J)$ are the same as the set of chambers in $(\Cone(J)^\circ,\cH^J)$. 
    Moreover, two chambers in $(\Cone(J), \cH^J)$ are adjacent (i.e.\ they share a common wall) if and only if they are adjacent in $(\Cone(J)^\circ,\cH^J)$.
    As such, when referring to chambers and paths between chambers (see Section \ref{sec:geodesicpaths}), we will work with $(\Cone(J)^\circ,\cH^J)$ instead of $(\Cone(J),\cH^J)$. This is easier to do due to the locally-finite property of $(\Cone(J)^\circ,\cH^J)$.
    For example, local-finiteness of $(\Cone(J)^\circ,\cH^J)$ immediately implies that any two chambers in $(\Cone(J)^\circ,\cH^J)$ (hence also in $(\Cone(J),\cH^J)$) are connected by a finite path.
    \end{Remark}

The following Labelling Theorem in \cite{IW} shows us how to algebraically label the chambers of $(\Cone(J)^\circ,\cH^J)$ (which are also the chambers of $(\Cone(J),\cH^J)$ by Remark \ref{rmk:comparingtwoarrangements}).
\begin{Theorem}[\protect{\cite[Theorem 1.15]{IW}}] \label{thm:labellingtheorem}
    Let $\Gamma$ be a Coxeter diagram with associated Coxeter group $W$, and let $J$ be a subset of the vertex set $\Gamma_0$. There is a bijection between the set of chambers of $(\Cone(J)^\circ,\cH^J)$ and the following set
    \begin{equation} \label{eqn:chamberlabels}
    \Cham(J)\coloneq
    \{ (x,I) \mid W_Jx = xW_I \text{ and } x \text{ has minimal length in } xW_I\},
    \end{equation}
    given by sending $(x,I) \mapsto x\cdot C_I$.
\end{Theorem}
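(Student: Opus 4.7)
The plan is to exploit the stratification \eqref{eqn:Titsconedecomposition}. Since any chamber of $(\Cone(J), \cH_J)$ is open in $\Theta_J$, it must be a union of top-dimensional strata $x \cdot C_L$ with $\dim (x \cdot C_L) = \dim \Theta_J$, i.e., $|L| = |J|$. Writing $V_L := \Span(\alpha_l : l \in L)$ and similarly $V_J$, duality gives $x \cdot C_L \subseteq \Theta_J \iff x^{-1} V_J \subseteq V_L$, which under $|L| = |J|$ forces $x V_L = V_J$.

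First I show each such top-dimensional stratum $x \cdot C_L$ is itself a chamber. A wall of $x \cdot C_L$ has the form $x \cdot C_{L \cup \{i\}}$ for some $i \notin L$ and lies on $H_{x \alpha_i}$. Since $\alpha_i \notin V_L$, we have $x \alpha_i \notin V_J$, so $H_{x \alpha_i} \cap \Theta_J$ is a genuine codimension-one hyperplane in $\cH_J$. Moreover, a sign argument using the positive/negative decomposition of roots shows $x \cdot C_L$ avoids every hyperplane in $\cH_J$: for $\alpha \in \Phi_J^+$ and $\varphi \in C_L$, the expression $\varphi(x^{-1}\alpha) = \sum_{i \notin L} c_i \varphi(\alpha_i)$ has all $c_i$ of one sign and not all zero (since $x^{-1}\alpha \notin V_L$), hence is nonzero. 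Thus the top-dimensional strata are precisely the chambers.

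The core step is identifying the pair $(x, I)$ associated to a chamber $x \cdot C_L$ via a stabiliser argument. Set $I = L$ and normalise $x$ to be the minimal length element of $x W_I$. For any $v \in x \cdot C_I$, the pointwise stabiliser in $W$ equals $x W_I x^{-1}$ by the classical stratification of the reflection representation. On the other hand, this stabiliser is also $W_J$: the reflections $s_\beta$ fixing $v$ are those satisfying $v(\beta) = 0$; for $\beta \in V_J$ this holds automatically since $v \in \Theta_J$, while for $\beta \notin V_J$ the hyperplane $H_\beta \cap \Theta_J$ lies in $\cH_J$ and $v$ lies in a chamber, contradicting $v \in H_\beta$. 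Hence the stabiliser is $\langle s_\beta : \beta \in V_J \cap \Phi \rangle = W_J$, giving $x W_I x^{-1} = W_J$, i.e., $W_J x = x W_I$.

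Finally, injectivity of the map follows from the disjointness in the stratification: $x \cdot C_I = x' \cdot C_{I'}$ forces $I = I'$ (distinct subsets give distinct $W$-orbits of strata) and $x^{-1} x' \in W_I$; minimality then forces $x = x'$. Well-definedness of the forward map is by reversing the argument: $W_J = x W_I x^{-1}$ yields $x^{-1} \alpha_j \in \Phi \cap V_I$ for each $j \in J$, hence $x \cdot C_I \subseteq \Theta_J$, and conjugation preserves rank so $|I| = |J|$. The main obstacle is the stabiliser computation, specifically excluding reflections $s_\beta$ with $\beta \notin V_J$ from fixing $v$; this is handled cleanly by the chamber condition together with the characterisation of $\cH_J$.
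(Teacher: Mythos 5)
The paper itself does not prove this statement: it is imported verbatim from Iyama--Wemyss \cite[Theorem~1.15]{IW}, so there is no internal proof to compare against. Judged on its own, your argument is essentially correct and self-contained: the identification of chambers with top-dimensional strata, the stabiliser computation $W_J=\Stab_W(v)=xW_Ix^{-1}$ for $v$ in the chamber $x\cdot C_I$ (using that $\overline{C}$ is a fundamental domain with $\Stab_W(\varphi)=W_{I(\varphi)}$ for $\varphi\in\overline{C}$, and that $\Phi\cap V_J$ is the root system of $W_J$), and the injectivity and well-definedness arguments are all sound.

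One step needs tightening. The opening inference ``since any chamber is open in $\Theta_J$, it must be a union of top-dimensional strata'' is not a valid deduction as written: $\Cone(J)$ is generally not open in $\Theta_J$ (e.g.\ for infinite $W$), so openness of the components of the complement is not a priori available, and even an open union of strata could contain lower-dimensional ones (a common face of two top-dimensional strata, say). What actually makes the identification work is the complementary fact that every stratum $x\cdot C_L\subseteq\Theta_J$ with $|L|>|J|$ is \emph{contained} in some hyperplane of $\cH_J$: since $x\cdot C_L$ spans $x\cdot\Theta_L$ and $x\cdot\Theta_L\subseteq\Theta_J$ forces $V_J\subseteq x\cdot V_L$ (in your notation $V_L=\Span_\RR\{\alpha_l\mid l\in L\}$), the strict inequality $|L|>|J|$ produces some $l\in L$ with $x\cdot\alpha_l\notin V_J$, i.e.\ $x\cdot\alpha_l\in\Phi_J$ and $x\cdot C_L\subseteq H_{x\cdot\alpha_l}$. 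Combined with your (correct) sign argument showing that each top-dimensional stratum misses every hyperplane of $\cH_J$, this identifies $\Cone(J)\setminus\bigcup_{H\in\cH_J}H$ with the disjoint union of the sets $x\cdot C_L$ with $|L|=|J|$, each open and convex in $\Theta_J$, whose connected components are then exactly these strata. Since this is the same linear algebra you already deploy elsewhere in the proof, the gap is easily filled, but as stated the first paragraph has the logic backwards.
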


\begin{Remark}
    Note that we can have $x\cdot C_I = x'\cdot C_I$ for distinct $x$ and $x'$, and indeed this is the case for any $x' \in xW_I$. The length constraint that appears in \eqref{eqn:chamberlabels}  removes this ambiguity.
\end{Remark}

From here on, we will abuse notation and denote both the set in \eqref{eqn:chamberlabels} and the set of chambers in $(\Cone(J)^\circ,\cH^J)$ by $\Cham(J)$, and accordingly we will sometimes write  $(x,I) \in \Cham(J)$ and other times $x\cdot C_I \in \Cham(J)$ depending on the context. 

\begin{example}\label{ex:A3.1}
    We describe our constructions with a small example. A more involved example is presented in Section \ref{sec:example}. 
    Let $\Gamma$ be of type $A_3$ and let $J=\{3\} \subset \Gamma_0$. Write $\Theta=\bigoplus_{i=1}^3\RR \alpha_i^*$, where $\alpha_i^*$ is the basis of $\Theta=V^*$ which is dual to the standard basis of $V$ given by the simple roots. Then $\Theta_J=\RR\alpha_1^*\oplus \RR\alpha_2^*$, and $\cH^J$ contains the following 3 hyperplanes (lines):
    \begin{align*}
        H_{\alpha_1}\cap \Theta_J  = \RR\alpha_2^*, \;\;
        H_{\alpha_2}\cap \Theta_J  = \RR\alpha_1^*, \;\;
        H_{\alpha_1+\alpha_2}\cap \Theta_J  = \RR(\alpha_1^*-\alpha_2^*).
    \end{align*}
    The arrangement in $\Theta_J = \Cone(J) = \Cone(J)^\circ$ is shown below, where we included the chamber labels given by Theorem \ref{thm:labellingtheorem}:
    \[
    \begin{tikzpicture}[scale=0.9, >=Latex, every node/.style={font=\small}]
        \draw[very thick] (0,-2.35) -- (0,2.35);
        \draw[very thick] (-3.45,0) -- (3.45,0);
        \draw[very thick] (-2.35,2.35) -- (2.35,-2.35);

        \node[above] at (0,2.35) {$\boldsymbol{\alpha}_2^*$};
        \node[right] at (3.45,0) {$\boldsymbol{\alpha}_1^*$};
        \node[right] at (2.35,-2.35) {$\boldsymbol{\alpha}_1^*-\boldsymbol{\alpha}_2^*$};

        \node[font=\scriptsize] at (1.8,1.0) {$(e,3)$};
        \node[font=\scriptsize] at (-0.65,1.7) {$(s_1,3)$};
        \node[font=\scriptsize] at (-2.25,0.8) {$(s_1s_2s_3,2)$};
        \node[font=\scriptsize] at (-1.75,-1.25) {$(s_1s_2s_3s_1s_2,1)$};
        \node[font=\scriptsize] at (1.02,-2.08) {$(s_2s_3s_1s_2,1)$};
        \node[font=\scriptsize] at (2.25,-0.8) {$(s_2s_3,2)$};
    \end{tikzpicture}
    \]
\end{example}


\subsection{Simple wall crossings} 
Let $x\cdot C_I$ be a chamber of the arrangement $(\Cone(J)^\circ,\cH^J)$, i.e. an element of $\Cham(J)$. Recall that a wall of $x\cdot C_I$ is a codimension-one subspace of $\overline{x\cdot C_I}$ given by its intersection with some hyperplane $H \in \cH^J$. 
Note that such a hyperplane can always be described as $H_{x\alpha_a}$ for some $a \in I^c$, and the wall is given by $\overline{x \cdot C_{I+a}}$.
In \cite[Section 1.5]{IW}, Iyama--Wemyss provide us explicit formulae that relate two adjacent chambers sharing a common wall.
To explain this  we need to introduce some notation. 

\begin{Notation}
Let $I \subseteq \Gamma_0$ and let $a \in I^c$. 
Following \cite[Notation 1.31]{IW}, let $\Gamma(I+a)$ denote the induced subgraph of $\Gamma$ on the vertices $I+a$. We define $[I,a]$ to be the set of vertices of the connected component of $\Gamma(I+a)$ that contains $a$.
\end{Notation}

Now let $(x,I) \in \Cham(J)$ and suppose that we have $a \in I^c$ such that $W_{[I,a]}$ is a finite parabolic subgroup. We set 
\begin{equation} \label{eqn:BHgen}
v_{a,I} \coloneqq w_{[I,a]-a}w_{[I,a]} \in W,    
\end{equation} 
where $w_K$ denotes the longest element in the finite parabolic subgroup $W_K \subseteq W$ (see Notation \ref{notation:longest_element}).
Set $K \coloneqq I+a-\iota_{[I,a]}(a)$ (see \eqref{eqn:diagraminvolution} for definition of $\iota_{[I,a]})$. The \textbf{simple wall crossing} at $a$ is given by
\begin{equation} \label{eqn:wallcrossingformula}
\omega_{a,I}(x,I) \coloneqq (xv_{a,I}, K).
\end{equation}
Note that $v_{a,I}$ can be alternatively characterised as the unique element for which $v_{a,I}(\Phi_{I+a}^+\setminus \Phi_K^+) \subseteq \Phi_K^-$ and $v_{a,I}(\Phi_K^+) \subseteq \Phi_{I+a}^+$ (cf.\ paragraph after Proposition 2.1 in \cite{BH}).

\begin{Remark}\label{rem:v_{a,varnothing}}
    If $J$ is  empty then it is its only associate. For all $a\in \Gamma_0=J^c$, we have $[\varnothing,a]=\{a\}$ and thus $W_{[\varnothing,a]}=W_a$ is a cyclic group of order $2$. The element $v_{a,\varnothing}$ is then the simple reflection $s_a$. 
\end{Remark}

The terminology of simple wall crossing is justified by the following:

\begin{Proposition}[\protect{\cite[Theorem 1.32]{IW}}]\label{prop:simplewallcrossing}
    Let $(x,I), (y,K) \in \Cham(J)$ be two adjacent chambers.
    Then there exists a unique $a \in I^c$ and a unique $a' \in K^c$ such that $I+a = K+a'$. Furthermore,
    \begin{enumerate}
        \item the common wall is given by
        \[
        \overline{x\cdot C_{I+a}}= \overline{y\cdot C_{K+a'}},
        \]
        \item $W_{[I,a]} = W_{[K,a']}$ is a finite parabolic subgroup, and
        \item the two chambers are related by the simple wall crossings formula \eqref{eqn:wallcrossingformula}:
    \begin{align*}
        \omega_{a,I}(x,I)  &= (y,K), \\
        \omega_{a',K}(y,K) &= (x,I).
    \end{align*}
    \end{enumerate}
\end{Proposition}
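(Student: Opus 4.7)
The plan is to reduce the statement to a combination of the stratification \eqref{eqn:Titsconedecomposition}, the labelling theorem (Theorem \ref{thm:labellingtheorem}), and a careful analysis of the local hyperplane arrangement at a generic point of the shared wall.

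First I would identify the shared wall. Two chambers are adjacent precisely when they share a (unique) common codimension-one face. By the stratification \eqref{eqn:Titsconedecomposition} applied to the closures $\overline{x\cdot C_I}$ and $\overline{y\cdot C_K}$, the codimension-one strata on the boundary of $x\cdot C_I$ are exactly those of the form $x\cdot C_{I+a}$ for $a\in \Gamma_0\setminus I$ with $x\cdot C_{I+a}\subseteq \Theta_J$, and similarly for $y\cdot C_K$. Uniqueness of the shared wall therefore immediately yields unique $a\in \Gamma_0\setminus I$ and unique $a'\in\Gamma_0\setminus K$ such that the common wall equals $\overline{x\cdot C_{I+a}}=\overline{y\cdot C_{K+a'}}$, establishing (1) and (2). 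The disjointness of the stratification \eqref{eqn:Titsconedecomposition} then forces equality of labels, i.e.\ $I+a=K+a'$.

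Next I would show that $W_{[I,a]}$ is finite. Pick a generic point $u$ in the relative interior of the common wall $x\cdot C_{I+a}$, chosen inside $\Cone(J)^\circ$. By the local-finiteness established in Proposition \ref{prop:locallyfinite}, a sufficiently small open ball $B_u$ in $\Cone(J)^\circ$ meets only finitely many hyperplanes of $\cH_J$, and all of these necessarily pass through $u$. By standard properties of the Tits cone (\cite{Vinberg_reflectiongroup} and \cite[Ch.\ V]{Bourbaki_4-6}), the stabiliser of $u$ in $W$ is the parabolic $xW_{I+a}x^{-1}$, and its restriction to the normal slice of the stratum generates the finite local reflection group acting transitively on the chambers around $u$. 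Since chambers on both sides of the wall exist in $\Cone(J)^\circ$, this stabiliser must be finite, hence $W_{I+a}$ is finite; as $W_{[I,a]}\leq W_{I+a}$ is a standard parabolic of a finite Coxeter group, it is finite as well. This gives (3).

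Finally I would verify the wall-crossing formula in (4) by reducing to the finite Coxeter group $W_{[I,a]}$. Locally at $u$, the relevant reflection arrangement is generated by the simple roots indexed by $[I,a]$ (the simple roots outside $[I,a]$ are already fixed by being zero on $\Theta_I$ or being positive throughout the star of $u$), so the two adjacent local chambers correspond respectively to the parabolic subsets $[I,a]-a$ and $[I,a]-\iota_{[I,a]}(a)$ inside $W_{[I,a]}$. The element $v_{a,I}=w_{[I,a]-a}w_{[I,a]}$ is precisely the element of $W_{[I,a]}$ that takes the former chamber to the latter across the wall indexed by $a$; equivalently, it is the unique element satisfying $v_{a,I}(\Phi_{I+a}^+\setminus\Phi_K^+)\subseteq \Phi_K^-$ and $v_{a,I}(\Phi_K^+)\subseteq \Phi_{I+a}^+$ as noted in the excerpt. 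Applying $x$ on the left and noting that the global label changes only within the connected component $[I,a]$, one deduces $y\cdot C_K=xv_{a,I}\cdot C_K$ with $K=I+a-\iota_{[I,a]}(a)$; the characterisation of $v_{a,I}$ on positive roots gives the requisite minimal length of $xv_{a,I}$ in its $W_K$-coset. Symmetry in $(x,I)$ and $(y,K)$ gives the reverse crossing formula.

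The main obstacle is the last step: matching the combinatorial recipe for $v_{a,I}$ with the geometric operation of chamber crossing. The intuition is clean once one isolates the local finite Coxeter picture, but the precise identification of which simple reflections act in the local slice (and the consequent appearance of $\iota_{[I,a]}$ via the longest element) requires careful bookkeeping, particularly when $[I,a]$ is a proper subset of $I+a$ so that several connected components of $\Gamma(I+a)$ must be tracked simultaneously.
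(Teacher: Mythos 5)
The paper does not prove this proposition: it is imported verbatim from Iyama--Wemyss as \cite[Theorem 1.32]{IW}, so there is no in-paper argument to compare against. Judged on its own terms, your proposal contains a genuine error in the finiteness step. You take a generic point $u$ of the common wall inside $\Cone(J)^\circ$, identify its stabiliser in $W$ as $xW_{I+a}x^{-1}$, and conclude that this stabiliser ``must be finite, hence $W_{I+a}$ is finite''. This is false in general: $\Cone(J)^\circ$ is the interior of $\Cone(J)$ taken inside $\Theta_J$, not inside $\Theta$, and its points need not lie in $\Cone^\circ$ --- the Remark following Proposition \ref{prop:locallyfinite} notes that $\Cone(J)\cap\Cone^\circ=\emptyset$ whenever $W_J$ is infinite. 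In that situation every chamber label $(x,I)$ has $W_I$ conjugate to $W_J$ and hence infinite, so $W_{I+a}\supseteq W_I$ is infinite, yet the proposition (stated for arbitrary $\Gamma$ and $J$) still asserts that the component group $W_{[I,a]}$ is finite. What local finiteness at $u$ actually gives is that only finitely many hyperplanes of $\cH_J$ pass through $u$, i.e.\ that $|\Phi_{I+a}\setminus\Phi_I|<\infty$; passing from this to finiteness of $W_{[I,a]}$ requires the nontrivial combinatorial input of Lemma \ref{lem:combinatoriallemmas}\eqref{item:Jrootsfiniteness} (IW's Lemma 1.40), which your shortcut through $W_{I+a}$ bypasses and cannot replace.

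A secondary issue: your verification of the wall-crossing formula is only a sketch, and it leans on the same flawed local picture, since the ``finite local reflection group'' at $u$ must be organised around $W_{[I,a]}$ rather than the possibly infinite $W_{I+a}$. The identification of $v_{a,I}=w_{[I,a]-a}w_{[I,a]}$ as the element with $x v_{a,I}\cdot C_K = y\cdot C_K$, together with the minimality of $xv_{a,I}$ in its $W_K$-coset, is precisely the substance of IW's proof of \eqref{eqn:wallcrossingformula}; acknowledging that it ``requires careful bookkeeping'' does not discharge it. The root-theoretic characterisation of $v_{a,I}$ quoted after \eqref{eqn:wallcrossingformula} is the right tool, but the computation showing that $v_{a,I}\cdot C_K$ and $C_I$ share the wall $\overline{C_{I+a}}$ still needs to be carried out.
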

Note also that $v_{a,I}v_{a',K} = v_{a',K}v_{a,I} = e$.
\begin{Remark}\label{rem:wallcrossingshorthand}
In \cite[Definition 1.28]{IW} the simple wall crossing is denoted $\omega_a$. 
In situations where no confusion can arise, we will also adopt this notation, but in general we record also $I$ to avoid ambiguity. 
\end{Remark}

\begin{Remark} \label{rmk:oppositev}
    The elements $v_{a,I}$ defined here are the inverse of $v[a,I]$ in the notation of \cite{BH}. 
    In fact, Brink and Howlett define more generally elements $v[I,J]$ for certain disjoint subsets $I$ and $J$ \cite[Section 2]{BH}.
\end{Remark}

We recall some crucial results from \cite{IW}.
\begin{Lemma} \label{lem:combinatoriallemmas}
    \begin{enumerate}
        \item \label{item:Jrootsfiniteness}If $|\Phi_{I+a}-\Phi_I| < \infty$, then $W_{[I,a]}$ is finite.
        \item \label{item:lengthandhalfspace} Let $(x,I)$ and $\omega_a(x,I)$ be adjacent chambers. Then $\ell(xv_{a,I}) = \ell(x)+\ell(v_{a,I})$ if and only if $(e,J)$ and $(x,I)$ are on the same half-space separated by $H_{x\cdot\alpha_a}$.
        \item \label{item:chambersfinitewallcrossing} Any two chambers are related by a finite sequence of wall-crossings.
        \item \label{item:reducedwallcrossing} If $W_{I+a}$ is finite (so $W_{[I,a]}$ is also finite), then the simple wall crossing formula \eqref{eqn:wallcrossingformula} reduces to
        \begin{equation} \label{eqn:reducedwallcrossingformula}
            \omega_{a,I}(x,I) = (xw_Iw_{I+a}, I+a-\iota_{I+a}(a))
        \end{equation}
    \end{enumerate}
\end{Lemma}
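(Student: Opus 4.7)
The lemma collects four separate facts, so I would treat each in turn, leaning on standard Coxeter-theoretic input and on the local-finiteness and convexity of $(\Cone(J)^\circ, \cH_J)$ already established in Proposition \ref{prop:locallyfinite}.

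\textbf{Part (1).} By construction $[I,a]$ is connected and contains $a$, so $W_{[I,a]}$ is irreducible with $\alpha_a$ a simple root. I would argue by contrapositive: if $W_{[I,a]}$ is infinite, then $\Phi_{[I,a]}^+$ is infinite, and in an infinite irreducible Coxeter group every simple root appears in the support of infinitely many positive roots (e.g.\ via the fact that $\alpha_a$ cannot be orthogonal to all of the Tits form on the limit directions). Each $\beta \in \Phi_{[I,a]}^+$ whose support contains $a$ lies in $\Phi_{I+a}^+ \setminus \Phi_I^+$, producing infinitely many elements in $\Phi_{I+a}-\Phi_I$, contradicting the hypothesis.

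\textbf{Part (2).} I would use the classical criterion $\ell(xv) = \ell(x)+\ell(v) \iff x\cdot \Phi(v) \subseteq \Phi^+$. Since $(x,I)\in\Cham(J)$, $x$ is minimal in $xW_I$, so $x\alpha_i \in \Phi^+$ for all $i\in I$. The geometric side is handled by a direct computation: for $\varphi \in x\cdot C_I$ one has $\varphi(x\alpha_a) = (x^{-1}\varphi)(\alpha_a)>0$, so $x\cdot C_I$ always lies on the positive side of $H_{x\alpha_a}$, and expanding $x\alpha_a$ in the simple roots shows $C_J$ lies on the positive side iff $x\alpha_a \in \Phi^+$ (using $x\alpha_a \in \Phi_J$, i.e.\ that its non-$J$-component is non-zero). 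It then suffices to prove $x\alpha_a \in \Phi^+ \iff x\cdot \Phi(v_{a,I}) \subseteq \Phi^+$. The $(\Leftarrow)$ direction uses $\alpha_a \in \Phi(v_{a,I})$ when available, or more robustly the observation that $\Phi(v_{a,I}) \subseteq \Phi_{[I,a]}^+ \subseteq \Phi_{I+a}^+$ together with the fact that $x\alpha_i \in \Phi^+$ for $i \in I+a$ promotes $x$ to a minimal representative of $xW_{I+a}$, so that $x\cdot \Phi_{I+a}^+ \subseteq \Phi^+$. For $(\Rightarrow)$ I would combine the decomposition $v_{a,I} = w_{[I,a]-a}w_{[I,a]}$ with the identity $\ell(xw_{[I,a]-a}) = \ell(x)+\ell(w_{[I,a]-a})$ (which holds because $[I,a]-a\subseteq I$) and apply the product formula $\ell(yw) = \ell(y)+\ell(w) - 2|\Phi(y^{-1})\cap\Phi(w)|$ with $y = xw_{[I,a]-a}$, $w = w_{[I,a]}$, to force $\Phi(y^{-1})\cap \Phi_{[I,a]}^+ = \Phi_{[I,a]-a}^+$, and then deduce $x\alpha_a \in \Phi^+$.

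\textbf{Part (3).} Given two chambers $C_1, C_2 \in \Cham(J)$, pick points $p_i \in C_i$. By convexity of $\Cone(J)^\circ$ (noted in the proof of Proposition \ref{prop:locallyfinite}) the segment $[p_1,p_2]$ lies in $\Cone(J)^\circ$. After a small generic perturbation of $p_1,p_2$ inside their chambers, we may assume $[p_1,p_2]$ meets no facet of codimension $\geq 2$. By local finiteness (Proposition \ref{prop:locallyfinite}) and compactness of $[p_1,p_2]$, the segment crosses only finitely many hyperplanes, each transversally, producing the required finite sequence of simple wall crossings.

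\textbf{Part (4).} Decompose $\Gamma(I+a)$ into the connected component $[I,a]$ (containing $a$) and the remaining components $J_1,\dots,J_k$. Finiteness of $W_{I+a}$ forces each factor to be finite, and $W_{I+a} = W_{[I,a]} \times \prod_r W_{J_r}$. Hence $w_{I+a} = w_{[I,a]}\cdot \prod_r w_{J_r}$ and, since $I = ([I,a]-a) \sqcup \bigsqcup_r J_r$, also $w_I = w_{[I,a]-a}\cdot \prod_r w_{J_r}$. The factors in the other components commute with $w_{[I,a]}$ (they act trivially on $\alpha_a$ and on $\Phi_{[I,a]}$), so $w_I w_{I+a} = w_{[I,a]-a}\,w_{[I,a]} = v_{a,I}$. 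For the same commutation reason $\iota_{I+a}$ agrees with $\iota_{[I,a]}$ on $[I,a]$, giving $\iota_{I+a}(a)=\iota_{[I,a]}(a)$ and therefore the simplified formula \eqref{eqn:reducedwallcrossingformula}.

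The main obstacle is the $(\Rightarrow)$ direction in Part (2): translating a purely algebraic length equality into a geometric half-space condition is delicate because, as one sees in small examples, $\alpha_a$ need not itself lie in $\Phi(v_{a,I})$, so a naive inversion argument is insufficient. I would expect to need the inversion-counting identity above, or alternatively to interpret $\ell(x)$ as the gallery distance from the base chamber $(e,J)$ through the ambient Tits cone arrangement and close the argument by a direct gallery-counting comparison.
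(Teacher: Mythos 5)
The paper does not actually prove parts (1)--(3): it cites them verbatim as Lemma~1.40 and Propositions~1.42--1.43 of Iyama--Wemyss, and dismisses part (4) as a straightforward exercise. Your self-contained arguments therefore take a genuinely different route, and they are essentially sound: (3) is the standard generic-segment argument and correctly leans on Proposition~\ref{prop:locallyfinite}; (4) is exactly the intended exercise (the off-component factors of $w_I$ and $w_{I+a}$ cancel, and they fix $\alpha_a$ so $\iota_{I+a}(a)=\iota_{[I,a]}(a)$); and (1) works once you replace the vague parenthetical about the Tits form by the standard fact that a proper parabolic subgroup of an infinite irreducible Coxeter group has infinite index, which gives that $\Phi^+_{[I,a]}\setminus\Phi^+_{[I,a]-a}$ (a subset of $\Phi_{I+a}-\Phi_I$) is infinite. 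What your write-up buys over the paper's is an actual argument; what it costs is that each step needs the care below.

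The one place you must tighten is part (2), and in particular your closing claim that passing from the length equality to the half-space condition is the delicate direction. It is not: the apparent obstacle comes from using the wrong inversion set. With $\mathrm{Inv}(v)=\{\beta\in\Phi^+ \mid v^{-1}\beta\in\Phi^-\}$ the additivity criterion reads $\ell(xv)=\ell(x)+\ell(v)\iff x\cdot\mathrm{Inv}(v)\subseteq\Phi^+$, and a direct computation from $v_{a,I}=w_{[I,a]-a}w_{[I,a]}$ gives $\mathrm{Inv}(v_{a,I})=\Phi^+_{[I,a]}\setminus\Phi^+_{[I,a]-a}$, which \emph{does} contain $\alpha_a$: indeed $w_{[I,a]-a}\alpha_a$ is a positive root of $W_{[I,a]}$ whose support contains $a$, so $v_{a,I}^{-1}\alpha_a=w_{[I,a]}\bigl(w_{[I,a]-a}\alpha_a\bigr)\in\Phi^-$. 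Hence length-additivity immediately forces $x\cdot\alpha_a\in\Phi^+$, which is the half-space condition by your (correct) computation comparing $C_J$ and $x\cdot C_I$ against $H_{x\cdot\alpha_a}$. The converse is precisely what your ``more robust observation'' proves: $x\cdot\alpha_a\in\Phi^+$ together with $x\cdot\alpha_i\in\Phi^+$ for $i\in I$ makes $x$ minimal in $xW_{I+a}$, whence $x\cdot\Phi^+_{I+a}\subseteq\Phi^+$ and in particular $x\cdot\mathrm{Inv}(v_{a,I})\subseteq\Phi^+$. With the convention fixed, the inversion-counting detour in your final paragraph is unnecessary, and your $(\Leftarrow)$/$(\Rightarrow)$ labels should be swapped.
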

\begin{proof}
    The first statement is \cite[Lemma 1.40]{IW}.
    The second and third statements are Proposition 1.42 and 1.43 in \cite{IW} respectively.
    For the final statement, suppose that the induced subgraph $\Gamma(I+a)$ is a product of disjoint connected components $\Gamma(I_1)\times\cdots\times \Gamma(I_r)$ and suppose that $a \in I_1$. Then $I_1=[I,a]$ and we have 
    \begin{align*}
        w_Iw_{I+a}=(w_{I_1-a}w_{I_2}\cdots w_{I_r})(w_{I_1}w_{I_2}\cdots w_{I_r}) = w_{I_1-a}w_{I_1}=v_{a,I}.
    \end{align*}
    Similarly, $\iota_{I+a}(a)=\iota_{I_1}(a)=\iota_{[I,a]}(a)$.
\end{proof}

\begin{Remark}
    Statement \eqref{item:chambersfinitewallcrossing} in Lemma \ref{lem:combinatoriallemmas} is also an immediate consequence of the local-finiteness of the hyperplane arrangement $(\Cone(J)^\circ,\cH^J)$ (whose chambers are also chambers of $(\Cone(J),\cH^J)$; see Remark \ref{rmk:comparingtwoarrangements}).
\end{Remark}

\subsection{Normaliser actions and complexified hyperplane complements}\label{sec:normaliser_action} In this section, we study the action of the normaliser $\mathrm{Norm}_W(W_J)$ of the standard parabolic subgroup $W_J$ on the space $\Theta_J$, the Tits cone intersection $\Cone(J)$ and the set of chambers $\Cham(J)$, in order to establish the short exact sequence \eqref{eq:ses3} mentioned in the introduction. 

We begin by describing this action and a few of its properties.

\begin{Lemma}\label{lem:NWJaction}
    The action of $\mathrm{Norm}_W(W_J) \subseteq W$ on $\Theta$ preserves $\Theta_J$. The kernel of the action of $\mathrm{Norm}_W(W_J)$ on $\Theta_J$ is precisely $W_J$, so that we obtain a faithful linear action of $N(W,J)\coloneq \mathrm{Norm}_W(W_J)/W_J$ on $\Theta_J$.
    This action of $N(W,J)$ further restricts to an action on $\Cone(J)$ that is compatible with the stratification in \eqref{eqn:JTitsconedecomposition}: if $\overline{g}\in N(W,J)$, then we have $\overline{g}\cdot(x\cdot C_L)=(gx)\cdot C_L$ where $g$ is any lift of $\overline{g}$ in $\mathrm{Norm}_W(W_J)$.
\end{Lemma}
\begin{proof}
    Firstly, note that $g \in \mathrm{Norm}_W(W_J)$ implies that $g^{-1}s_jg$ is a reflection in $W_J$ for all $j \in J$.
    As such, for all simple root $\alpha_j$ with $j \in J$, we get $g^{-1}\cdot \alpha_j \in \Phi_J$, so that $g^{-1}\cdot \alpha_j = \sum_{j' \in J}c_{j'}\alpha_{j'}$ is a linear combination that involves only the simple roots from $J$.
    Thus for all $\varphi \in \Theta_J$, we have $(g\cdot \varphi)(\alpha_j)=\varphi(g^{-1}\cdot\alpha_j) = 0$ for all $j \in J$, which shows that $g\cdot \varphi \in \Theta_J$.

    Under the $W$-action on $\Theta$, the stabiliser of any point in $C_J \subseteq \Theta_J \subseteq \Theta$ is exactly $W_J$ (see for instance \cite[Chapter V. \S 4, Proposition 5]{Bourbaki_4-6}).
    Since $W_J \subseteq \mathrm{Norm}_W(W_J)$, it follows that the kernel of the action of $\mathrm{Norm}_W(W_J)$ on $\Theta_J$ is also $W_J$.
    The linearity of the action of $N(W,J) = \mathrm{Norm}_W(W_J)/W_J$ is an immediate consequence of the linearity of the action of $W \supseteq \mathrm{Norm}_W(W_J)$.

    The final statement follows immediately from the stratification of $\Cone(J)$ in \eqref{eqn:JTitsconedecomposition}.
    Indeed, given $g\in \mathrm{Norm}_W(W_J)$ and a stratum $x\cdot C_L \subseteq \Cone(J) \subseteq \Theta_J$, the equality $g \cdot (x\cdot C_L) = (gx)\cdot C_L$ is immediate from the definition of the action and we have that $g \cdot (x\cdot C_L) \subseteq \Theta_J$ since $g\cdot \Theta_J \subseteq \Theta_J$.
\end{proof}

\begin{Lemma}\label{lem:normaliseraction}
Recall the action of $N(W,J)$ on $\Cone(J)$ from Lemma \ref{lem:NWJaction}.
    \begin{enumerate}
        \item \label{item:actionminlength} Each $\overline{g} \in N(W,J)$ acts on $\Cham(J)$ by $\overline{g}\cdot (x,I)=(y,I)$, where $y$ is the minimal length representative of the coset $gxW_I$ for $g \in \mathrm{Norm}_W(W_J)$ any representative of $\overline{g}$.
        \item \label{item:transitive1stlabel} The action of $N(W,J)$ on $\Cham(J)$ is transitive on the first label in the sense that any two chambers of the form $(x,I)$ and $(y,I)$ lie in the same $N(W,J)$-orbit. 
        \item \label{item:freeonchambers} The action of $N(W,J)$ on $\Cham(J)$ is free. 
        \item \label{item:preserveadjacency} The action of $N(W,J)$ on $\Cham(J)$ preserves adjacency: precisely if $\omega_a(C)=C'$ with $C,C'\in \Cham(J)$, then $\omega_a(g\cdot C)=g \cdot C'$ for any $g\in N(W,J)$.
    \end{enumerate}
\end{Lemma}
\begin{proof}
    
    \eqref{item:actionminlength} This is a direct application of Theorem \ref{thm:labellingtheorem} and Lemma \ref{lem:NWJaction}. Indeed, each label $(x,I)$ corresponds to the chamber $x \cdot C_I$, and thus $\overline{g}\cdot (x,I)$ must be the label of the chamber $(gx)\cdot C_I$. Under the bijection in Theorem \ref{thm:labellingtheorem}, this label is given by $(y,I)$ where $y$ is the minimal length representative of $gxW_I$.
    
    \eqref{item:transitive1stlabel} Let $(x,I)$ and $(y,I)$ be two chambers in $\Cham(J)$. By Theorem \ref{thm:labellingtheorem}, we have $W_Jx=xW_I$ and $W_Jy=yW_I$, which implies
    \[W_Jyx^{-1}=yW_Ix^{-1}=yx^{-1}W_J.\]
    In other words, $yx^{-1}$ belongs to $\mathrm{Norm}_W(W_J)$. Its image in $N(W,J)$ then maps $(x,I)$ to $(y,I)$ by definition of the action, which proves transitivity.

    \eqref{item:freeonchambers} For $(x,I) \in \Cham(J)$, let $\overline{g}\in N(W,J)$ be such that $\overline{g}\cdot (x,I)=(x,I)$ and let $g \in \mathrm{Norm}_W(W_J)$ be any representative of $\overline{g}$.
    By \eqref{item:actionminlength} above, $\overline{g}\cdot (x,I)=(x,I)$ implies that $gxW_I=xW_I$, thus
    \[
    W_Jx = xW_I = gxW_I=gW_Jx,
    \]
    showing that $g \in W_J$ as required.

    \eqref{item:preserveadjacency} This is an immediate consequence of the linearity of the action of $N(W,J)$ on $\Theta_J$ (Lemma \ref{lem:NWJaction}). Moreover, if two chambers $x\cdot C_I$ and $y\cdot C_K$ shares a common wall supported on the hyperplane $H_{x\cdot\alpha_a}$ (so that $\omega_{a,I}(x,I) = (y,K)$), then $gx\cdot C_I$ and $gy\cdot C_K$ shares a common wall supported on the hyperplane $H_{gx\cdot\alpha_a}$.
    Hence the algebraic description also follows.
\end{proof}

\begin{Remark}
    Unlike the $J=\emptyset$ case, the action of $N(W,J)$ need not be transitive on the whole set of chambers; see Example \ref{ex:A3.1} and Section \ref{sec:example}.
\end{Remark}

Recall from Proposition \ref{prop:locallyfinite} that the hyperplane arrangement $(\Cone(J)^\circ, \cH^J)$ in the interior of $\Cone(J)$ is a locally-finite hyperplane arrangement. 
We abuse terminology again just like in Notation \ref{not:complexified_complement} and refer to the space 
\begin{equation}\label{eqn:complexifiedJcone}
\cX_J^\CC \coloneqq \Big(\Cone(J)^\circ\times\Cone(J)^\circ\Big) \setminus \bigcup_{H \in \cH^J} H\times H
\end{equation}
as the \textbf{complexified hyperplane complement} of the arrangement $(\Cone(J)^\circ, \cH^J)$.
Note that when $\Gamma$ is spherical, we have that $\Cone(J)^\circ=\Theta_J$, and $$\cX_J^\CC \cong \left( \Theta_J\otimes_\RR\CC \right) \setminus \bigcup_{H \in \cH^J}H\otimes_\RR\CC.$$ 
\begin{Remark} \label{rmk:salvetticomplex}
    Since $(\Cone(J)^\circ, \cH^J)$ is a locally-finite arrangement in an open convex cone, $\cX_J^\CC$ has a finite CW-model given by the so-called Salvetti complex in \cite[Section 3.1]{Paris_Kpi1}.
\end{Remark}

Since $N(W,J)$ acts on $\Cone(J)$, and hence on $\Cone(J)^\circ$, we have a well-defined action of $N(W,J)$ on $\cX_J^\CC$. Since the action of $N(W,J)$ on chambers is free by Lemma \ref{lem:normaliseraction}\eqref{item:freeonchambers}, the action of $N(W,J)$ on $\cX_J^\CC$ is both free and properly discontinuous.
Thus the associated quotient map $\cX_J^\CC \to \cX_J^\CC/N(W,J)$ is a normal (i.e.\ Galois) covering of topological spaces, which induces the following short exact sequence, generalising the one in \eqref{eqn:classicalSES} (where $x \in \cX_J^\CC$):
\begin{align}\label{eq:TitsconeSES}
1 \to \pi_1(\cX_J^\CC,x) \to \pi_1(\cX_J^\CC/N(W,J),x) \to N(W,J) \to 1.  
\end{align}

\subsection{Geodesics paths} \label{sec:geodesicpaths} We recall some familiar notions from the study of locally-finite hyperplane arrangements.
    A \textbf{path} $p$ in the hyperplane arrangement $(\Cone(J)^\circ,\cH^J)$ is a sequence of chambers $$p=\big((x_1,I_1),(x_2,I_2),...,(x_m,I_m)\big)$$ such that for each $i$,  $(x_i, I_i)$ and $(x_{i+1},I_{i+1})$ are adjacent. The \textbf{source} of $p$ is $s(p)= (x_1,I_1)$, the \textbf{target} of $p$ is $t(p)=(x_m,I_m)$, and the \textbf{length} of $p$ is $m-1$. 
    
    If $p$ and $q$ are two paths such that $t(p)=s(q)$, we let $p*q$ denote their concatenation, which is a path from $s(p)$ to $t(q)$. 
    If $(x,I)$ and $(y,K)$ are adjacent chambers related by $\omega_{a,I}$, we let $\omega_{a,I}:(x,I)\to(y,K)$ denote the length one path from $(x,I)$ to $(y,K)$ which we also call a \textbf{simple wall crossing} (cf. part (3) of Proposition \ref{prop:simplewallcrossing}). As in Remark \ref{rem:wallcrossingshorthand}, if the source is clear we will simply denote the simple wall crossing by $\omega_a$. 

To a path $p$ we can associate by Proposition \ref{prop:simplewallcrossing} a sequence of simple wall crossings satisfying $\omega_{a_i,I_i}(x_i,I_i) = (x_{i+1},I_{i+1})$, i.e. $$p=\omega_{a_1,I_1}*\cdots*\omega_{a_{m-1},I_{m-1}}.$$ 
We define 
$$
v_p \coloneq v_{a_1,I_1}\cdots v_{a_{m-1},I_{m-1}} \in W
$$

Finally, $p$ is a \textbf{geodesic path} if it crosses a minimal number of walls between the chambers $(x_1,I_1)$ and $(x_m,I_m)$. Equivalently, $p$ crosses any hyperplane in $\cH^J$ at most once.

\begin{Proposition} \label{prop:geodesicsandwordlength}
    A path $p=\big((x_1,I_1),(x_2,I_2),...,(x_m,I_m)\big)$ is a  geodesic if and only if $$\ell(v_p) = \sum_{i=1}^{m-1}\ell(v_{a_i,I_i}).$$
\end{Proposition}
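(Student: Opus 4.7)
The plan is to induct on the length $m-1$ of the path, using Lemma~\ref{lem:combinatoriallemmas}\eqref{item:lengthandhalfspace} as the key tool. First I would reduce to the case where $(x_1,I_1) = (e,J)$ by translating $p$ via $x_1^{-1}$. Since $W_J x_1 = x_1 W_{I_1}$ means that $x_1$ conjugates $W_{I_1}$ to $W_J$ (and therefore sends $\Theta_{I_1}$ onto $\Theta_J$), the translation $x_1^{-1}\cdot$ carries the path $p \in \Cham(J)$ to a path $\tilde p \in \Cham(I_1)$ starting at $(e,I_1)$. Because $v_{a_i,I_i}$ depends only on $(a_i,I_i)$, we have $v_{\tilde p} = v_p$, and since $x_1^{-1}$ acts as a linear isomorphism of hyperplane arrangements, $\tilde p$ is a geodesic iff $p$ is. Replacing $J$ by $I_1$, I may therefore assume $x_1 = e$; in this case $x_i = v_{a_1,I_1} \cdots v_{a_{i-1},I_{i-1}}$, and in particular if $p'$ is the length-$(m-2)$ prefix of $p$ then $x_{m-1} = v_{p'}$ and $v_p = v_{p'}\,v_{a_{m-1},I_{m-1}}$.

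The base cases $m \leq 2$ are immediate. For the inductive step, Lemma~\ref{lem:combinatoriallemmas}\eqref{item:lengthandhalfspace} applied to the last simple crossing yields the equivalence
\[
\ell(v_p) = \ell(v_{p'}) + \ell(v_{a_{m-1},I_{m-1}}) \iff (e,J)\text{ and }(x_{m-1},I_{m-1})\text{ lie on the same side of }H_{x_{m-1}\alpha_{a_{m-1}}}.
\]
Note that $x_{m-1}\alpha_{a_{m-1}} \in \Phi^+_J$ because it supports a wall of a chamber in $\Cham(J)$, so $H_{x_{m-1}\alpha_{a_{m-1}}}\cap \Theta_J \in \cH_J$; moreover, for any two points of $\Theta_J$, being on the same side of $H_{x_{m-1}\alpha_{a_{m-1}}}$ in $\Theta$ is equivalent to being on the same side of $H_{x_{m-1}\alpha_{a_{m-1}}}\cap \Theta_J$ in $\Theta_J$.

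For $(\Rightarrow)$, if $p$ is a geodesic then so is its prefix $p'$, hence by induction $\ell(v_{p'}) = \sum_{i<m-1}\ell(v_{a_i,I_i})$; moreover the last step of $p$ crosses $H_{x_{m-1}\alpha_{a_{m-1}}}\cap\Theta_J$, so the geodesic property forbids $p'$ from crossing this same hyperplane, placing its endpoints on the same side of it, and the displayed equivalence yields $\ell(v_p) = \sum_{i<m}\ell(v_{a_i,I_i})$. For $(\Leftarrow)$, assume $\ell(v_p) = \sum_{i<m}\ell(v_{a_i,I_i})$; the triangle inequalities $\ell(v_p) \leq \ell(v_{p'}) + \ell(v_{a_{m-1},I_{m-1}})$ and $\ell(v_{p'}) \leq \sum_{i<m-1}\ell(v_{a_i,I_i})$ force equality throughout, so by induction $p'$ is a geodesic, and the displayed equivalence places the endpoints of $p'$ on the same side of $H_{x_{m-1}\alpha_{a_{m-1}}}$; since $p'$ is geodesic, this means $p'$ does not cross $H_{x_{m-1}\alpha_{a_{m-1}}}\cap\Theta_J$ at all, so appending the last simple crossing introduces a fresh hyperplane and $p$ remains a geodesic.

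The main obstacle is the normalisation step: one has to verify that translation by $x_1^{-1}$ really identifies the Tits cone intersections for $J$ and $I_1$ as hyperplane arrangements, so that Lemma~\ref{lem:combinatoriallemmas}\eqref{item:lengthandhalfspace} (which is stated relative to the base chamber $(e,J)$) is applicable after the reduction, and that same-sidedness transfers correctly between $\Theta$ and $\Theta_J$. Once these identifications are in place, the induction is a routine exchange-style bookkeeping.
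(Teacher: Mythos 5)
Your proof is correct and follows essentially the same route as the paper: the same induction on path length using Lemma~\ref{lem:combinatoriallemmas}\eqref{item:lengthandhalfspace} on the final wall crossing, together with the same translation-by-$x_1^{-1}$ reduction to a path based at the fundamental chamber (which the paper performs at the end rather than at the start). The extra care you take with the triangle inequality in the converse direction and with transferring same-sidedness from $\Theta$ to $\Theta_J$ is sound but does not change the substance of the argument.
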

\begin{proof}
    First the consider the case where $p$ is a path starting at $(e,J)$.
    We induct on the length of $p$.
    The statement is trivial if $p$ has length $1$.
    Now assume for induction that the statement is true for length $n-1$, and consider a path $p$ of length $n$ with $(x,I)\to \omega_a(x,I)$ being the last step of $p$, i.e. $p=q*\omega_a$ for some path $q$ of length $n-1$.
    Then $p$ is a geodesic if and only if $q$ is a geodesic that does not cross the hyperplane $H_{x\cdot \alpha_a}$.
    Therefore $p$ is a geodesic if and only if $q$ is a geodesic and $(x,I)$ and $(e,J)$ are on the same half space separated by $H_{x\cdot \alpha_i}$.
    The statement now follows from Lemma \ref{lem:combinatoriallemmas}\eqref{item:lengthandhalfspace}.

    Now consider instead that we have a path starting at an arbitrary chamber $(z,L)$.
    Then $z^{-1} \cdot \Theta_J = \Theta_L$, and this induces a bijection from $\Cham(J)$ to $\Cham(L)$. This bijection preserves and reflects geodesic paths.
    Moreover, the simple wall crossing $\omega_a: (x,I) \to (xv_{a,I},I+a-\iota_{[I+a]}(a))$ is sent to the simple wall crossing $\omega_a: (x',I) \to (x'v_{a,I},I+a-\iota_{[I+a]}(a))$, with $x'$ the minimal length representative in $z^{-1}xW_I$.
    It follows that any path $p$ is sent to a path $z^{-1}\cdot p$ with the property that $v_{z^{-1}\cdot p} = v_p \in W$.
    In particular, whether or not $v_p$ is length additive with respect to each simple wall crossing is invariant under the bijection.
    Thus we can apply $z^{-1}$ so that our path starts at $(e,L)$ and the result follows from the argument above.
\end{proof}

\subsection{The Deligne groupoid}\label{sec:Dgroupoid}
    The Deligne groupoid was introduced in \cite{Del72} to study the topology of finite simplicial arrangements (and prove the $K(\pi,1)$ conjecture for finite real reflection groups). Although our setting is more general than that of Deligne, the construction of the Deligne groupoid can be carried out similarly. However, the key features proven by Deligne, namely that the Deligne groupoid is a Garside groupoid equivalent to the fundamental groupoid of the complexified complement, only hold (to our knowledge) in the setting of finite simplicial arrangements.
    
    To the locally-finite arrangement $(\Cone(J)^\circ,\cH^J)$, we associate an oriented graph on the set of vertices $\Cham(J)$. To every pair of adjacent chambers we associate  a 2-cycle, i.e.\ given two adjacent chambers $(x,I)$ and $(y,K)$ there is one arrow $(x,I) \to (y,K)$ and another opposite arrow $(y,K) \to (x,I)$.
    We continue to use the notation $\omega_{a,I}$ as above to label these arrows.
    
    A path in this graph from $(x,I)$ to $(y,K)$ is a \textbf{geodesic} if it is a path of minimal length between $(x,I)$ and $(y,K)$.
    Equivalently, it crosses every hyperplane $H \in \cH^J$ at most once.
    
    Define $\D^+ \coloneqq \D^+(J)$ as the category with:
    \begin{itemize}
        \item objects given by $\Cham(J)$; and
        \item morphisms $\Hom_{\D^+}((x,I),(y,K))$ are paths from $(x,I)$ to $(y,K)$ modulo the relation generated by the identification of any two geodesic paths with the same source and target.
    \end{itemize}
    The arrangement groupoid $\D \coloneqq \D(J)$ associated to $(\Cone(J)^\circ,\cH^J)$ is defined as the groupoid completion of $\D^+$.
If $\Gamma$ is of spherical-type, then $(\Cone(J)^\circ,\cH^J) = (\Theta_J,\cH^J)$ is a finite simplicial arrangement, and we call $\sD$ the \textbf{Deligne groupoid}. 
In this case, $\sD$ is equivalent to the fundamental groupoid of $\cX^\CC_J$, denoted $\pi_1(\cX^\CC_J)$ \cite{Del72}.


\begin{Remark} \label{rmk:NormactiononD}
Note that the action of $N(W,J)$ on $\Cham(J)$ from Lemma \ref{lem:normaliseraction} induces an action of $N(W,J)$ on $\D$. Indeed, an element $g \in N(W,J)$ acts on the simple wall crossing $\omega_{a,I}:(x,I) \to (y,K)$  by mapping it to the simple wall crossing $\omega_{a,I}:g\cdot(x,I) \to g\cdot(y,K)$ (see Lemma \ref{lem:normaliseraction}\eqref{item:preserveadjacency}). This action clearly preserves geodesics, since if a path $p$ crosses some hyperplane more than once  the same is also true for the path $g\cdot p$. Therefore we have a well-defined action of $N(W,J)$ on $\D$. 
\end{Remark}

    \begin{example}\label{ex:A3.DeligneGraph}
    Continuing Example \ref{ex:A3.1}, we write $\omega_{a,i}$ for $\omega_{a,\{i\}}$. The oriented graph has one vertex for each of the six chambers and a pair of opposite arrows across each wall:
    \vspace{1mm}
    \[
	    \begin{tikzpicture}[
	        chamber/.style={font=\scriptsize, inner sep=1pt},
	        arrow/.style={->, >=Latex, shorten >=3pt, shorten <=3pt},
	        arrowlabel/.style={font=\scriptsize, inner sep=1pt}
	    ]
	        \node[chamber] (A) at (2.55,1.25) {$(e,3)$};
	        \node[chamber] (B) at (0,2.35) {$(s_1,3)$};
	        \node[chamber] (C) at (-3.05,1.25) {$(s_1s_2s_3,2)$};
        \node[chamber] (D) at (-3.05,-1.25) {$(s_1s_2s_3s_1s_2,1)$};
	        \node[chamber] (E) at (0,-2.35) {$(s_2s_3s_1s_2,1)$};
	        \node[chamber] (F) at (2.55,-1.25) {$(s_2s_3,2)$};

	        \draw[arrow, bend left=16] (A) to (B);
	        \draw[arrow, bend left=16] (B) to (A);
	        \node[arrowlabel] at (1.35,1.28) {$\omega_{1,3}$};
	        \node[arrowlabel] at (1.45,2.3) {$\omega_{1,3}$};

	        \draw[arrow, bend left=16] (B) to (C);
	        \draw[arrow, bend left=16] (C) to (B);
	        \node[arrowlabel] at (-1.5,1.28) {$\omega_{2,3}$};
	        \node[arrowlabel] at (-1.85,2.3) {$\omega_{3,2}$};

	        \draw[arrow, bend left=16] (C) to (D);
	        \draw[arrow, bend left=16] (D) to (C);
	        \node[arrowlabel] at (-2.22,0.22) {$\omega_{1,2}$};
	        \node[arrowlabel] at (-3.7,-0.22) {$\omega_{2,1}$};

	        \draw[arrow, bend left=16] (D) to (E);
	        \draw[arrow, bend left=16] (E) to (D);
	        \node[arrowlabel] at (-1.15,-1.22) {$\omega_{3,1}$};
	        \node[arrowlabel] at (-1.9,-2.4) {$\omega_{3,1}$};

	        \draw[arrow, bend left=16] (E) to (F);
	        \draw[arrow, bend left=16] (F) to (E);
	        \node[arrowlabel] at (0.95,-1.22) {$\omega_{2,1}$};
	        \node[arrowlabel] at (1.65,-2.4) {$\omega_{1,2}$};

	        \draw[arrow, bend left=16] (F) to (A);
	        \draw[arrow, bend left=16] (A) to (F);
	        \node[arrowlabel] at (1.9,0.22) {$\omega_{3,2}$};
	        \node[arrowlabel] at (3.38,-0.22) {$\omega_{2,3}$};
	    \end{tikzpicture}
    \]
\end{example}

\section{The geometry of the Brink--Howlett groupoid} \label{sec:BHgroupoid}
In \cite{BH} Brink and Howlett introduced a groupoid in order to study the normaliser of a standard parabolic subgroup in a Coxeter group. In this section, we give an explicit realisation of its universal cover in terms of the geometry of the associated Tits cone intersection. In Corollary \ref{cor:BHthmA} we use this to give a new proof to parts of one of Brink and Howlett's main results (Theorem A in \cite{BH}). We then explain in Section \ref{sec:atomicM} how to use the connection between Brink and Howlett's work and the Tits cone intersection to answer questions of Iyama and Wemyss about ``atomic Matsumoto relations''. 

Throughout this section, we fix a Coxeter diagram $\Gamma$ with associated Coxeter group $W$. We also fix a subset $J$ of the vertex set $\Gamma_0$. We otherwise keep the notation of Section \ref{sec:tits_cone_intersection}.

\subsection{The Brink--Howlett groupoid and its universal cover} 
    Recall that the reflection representation $V$ of $W$ has basis given by the simple roots $\alpha_k$ for $k \in \Gamma_0$. The \textbf{Brink--Howlett groupoid} $\BH \coloneqq \BH(J)$ is defined as follows:
    \begin{itemize}
        \item The objects are subsets $I \subseteq \Gamma_0$ which are associates of $J$ (see Definition \ref{def:associate}).
        \item The morphism space $\Hom_{\BH}(I,K)$ consists of  $x \in W$ such that 
        $$
        \{\alpha_i \mid i \in I\} =  \{ x\cdot\alpha_k \mid k \in K \}.$$ 
        Composition is given by multiplication in $W$ on the right.
    \end{itemize}

\begin{Remark}
    Note that $\BH$ is the opposite category of the groupoid in \cite{BH}.
    Namely, our composition convention follows path composition which reads from left to right: if $I,K,L$ are associates of $J$, then composition in $\BH$ is given by:
    \begin{align*}
        \Hom_\BH(I,K) \times \Hom_\BH(K,L) &\to \Hom_\BH(I,L) \\
         (x,y) &\mapsto xy.
    \end{align*}
    This is chosen so that it matches the geometric description  of the universal cover of $\BH$ below.
\end{Remark}

\begin{Lemma}
    Suppose $I \in \BH$ and $a \in I^c$ such that $W_{[I,a]}$ is finite.
    Then $$v_{a,I} \in \Hom_\BH(I,I+a-\iota_{[I,a]}(a)),$$
    with $v_{a,I}$ defined as in \eqref{eqn:BHgen}.
\end{Lemma}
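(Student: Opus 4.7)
The plan is to unpack the combinatorics directly. Set $L=[I,a]$, $L'=L-a$, $b=\iota_L(a)\in L$, and write $K=I+a-b$. Note that $L\subseteq I+a$ and $L\cap I=L-a=L'$, so $I=L'\sqcup(I-L)$, and likewise $K=(L-b)\sqcup(I-L)$ since $a\in L$ and $b\in L$. By definition $v_{a,I}=w_{L'}w_L\in W_L$, so $v_{a,I}$ fixes every simple root $\alpha_k$ with $k\in\Gamma_0-L$; in particular it fixes $\alpha_k$ for every $k\in I-L=K-L$.

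The key computation is for $k\in K\cap L=L-b$. Since $\iota_L(b)=a$ and $\iota_L$ is an involutive bijection of $L$, for $k\in L-b$ we have $\iota_L(k)\in L-a=L'$. Thus
\[
v_{a,I}(\alpha_k)=w_{L'}w_L(\alpha_k)=-w_{L'}(\alpha_{\iota_L(k)})=\alpha_{\iota_{L'}\iota_L(k)},
\]
a simple root whose index lies in $L'\subseteq I$. Moreover the composition $\iota_{L'}\circ\iota_L$ is a bijection $L-b\to L'$, so the map $k\mapsto$ (index of $v_{a,I}(\alpha_k)$) restricts to a bijection $L-b\to L'$. Combined with the identity on indices in $I-L$, we obtain a bijection
\[
K=(L-b)\sqcup(I-L)\;\xrightarrow{\sim}\;L'\sqcup(I-L)=I,
\]
under which $\{v_{a,I}\cdot\alpha_k\mid k\in K\}=\{\alpha_i\mid i\in I\}$, which is exactly the defining condition of $\Hom_{\BH}(I,K)$ (modulo the fact that $K$ is an associate of $J$).

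It remains to verify that $K$ is an associate of $J$. This is immediate once one knows $v_{a,I}$ sends $\{\alpha_k\mid k\in K\}$ bijectively onto $\{\alpha_i\mid i\in I\}$: conjugation by $v_{a,I}$ sends the simple reflections of $W_K$ to those of $W_I$, hence $v_{a,I}W_Kv_{a,I}^{-1}=W_I$. Since $I$ is an associate of $J$ by hypothesis, so is $K$. This shows both that $K\in\BH$ and that $v_{a,I}\in\Hom_{\BH}(I,K)$, completing the proof.

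The only step requiring attention is the bookkeeping that $\iota_L(k)\in L'$ for $k\in L-b$ (which fails exactly at $k=b$, which has been excluded) and the disjoint-union decompositions of $I$ and $K$; everything else is an unwinding of definitions together with the characterisation of $v_{a,I}$ already recorded in \eqref{eqn:BHgen}. No appeal to the more refined description of $v_{a,I}$ via positivity of roots is needed for this lemma.
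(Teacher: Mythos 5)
Your overall strategy---directly computing $v_{a,I}\cdot\alpha_k$ for each $k\in K$ and exhibiting a bijection onto $\{\alpha_i\mid i\in I\}$---is sound, and it usefully fills in the bookkeeping that the paper dismisses as immediate from the definitions. The decompositions $I=L'\sqcup(I-L)$ and $K=(L-b)\sqcup(I-L)$, the computation $v_{a,I}(\alpha_k)=\alpha_{\iota_{L'}\iota_L(k)}$ for $k\in L-b$, and the final remark that $K$ is an associate of $J$ because $v_{a,I}W_Kv_{a,I}^{-1}=W_I$, are all correct. However, one step is justified incorrectly: from $v_{a,I}\in W_L$ you conclude that $v_{a,I}$ fixes \emph{every} simple root $\alpha_k$ with $k\in\Gamma_0-L$. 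This is false in general. For instance, in type $A_3$ with $I=\{1\}$ and $a=2$, one has $L=\{1,2\}$ and $v_{a,I}=s_2s_1$, which sends $\alpha_3$ to $\alpha_2+\alpha_3\neq\alpha_3$. Membership in $W_L$ only guarantees that $\alpha_k$ is fixed when $k$ is not adjacent in $\Gamma$ to any vertex of $L$.

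Fortunately, the only instance you need is exactly of this kind, so the gap is local and repairable. For $k\in I-L\subseteq(I+a)-L$, the vertex $k$ lies in $I+a$ but outside $[I,a]$, which by definition is the connected component of $\Gamma(I+a)$ containing $a$; hence $k$ cannot be adjacent in $\Gamma$ to any vertex of $L=[I,a]$, since otherwise it would belong to that component. Therefore $s_l\cdot\alpha_k=\alpha_k$ for all $l\in L$, and so $v_{a,I}\cdot\alpha_k=\alpha_k$ for these $k$. With this corrected justification in place of the blanket claim about $\Gamma_0-L$, your argument goes through and proves the lemma.
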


\begin{proof}
    Let $k$ be an arbitrary element in $K \coloneqq I+a-\iota_{[I,a]}(a)$. Since $I,K$ are both finite and since $|I|=|K|$, it is sufficient to show that there exists some $i$ in $I$ such that $v_{a,I}(\alpha_k)= \alpha_i$. If $k$ belongs to $K \setminus[I,a]$, then $k$ also belongs to $I$ since $k \neq a$. Moreover, we then have $v_{a,I}(\alpha_k)=\alpha_k$ since $w_{[I,a]}(\alpha_k)=w_{[I,a]-a}(\alpha_k)=\alpha_k$. 
    
    On the other hand, if $k$ belongs to $K\cap [I,a]$, then $k\neq \iota_{[I,a]}(a)$ implies that $\iota_{[I,a]}(k)$ belongs to $[I,a]-a \subseteq I$. By denoting $i \coloneqq \iota_{[I,a]-a}(\iota_{[I,a]}(k)) \in I$, we get
    $$
    v_{a,I}(\alpha_k)=-w_{[I,a]-a}(\alpha_{\iota_{[I,a]}(k)}) = \alpha_i
    $$
    as required.
\end{proof}

\begin{Definition}\label{defn:universalgroupoid}
    Define the groupoid  $\wt{\BH} \coloneqq \wt{\BH}(J)$ as follows.
    \begin{itemize}
        \item The set of objects is $\Cham(J)$.
        \item The morphism space $\Hom_{\wt{\BH}}((x,I),(y,K))$ is the singleton consisting of $x^{-1}y \in W$. Composition is induced by multiplication in $W$ (on the right).
    \end{itemize}
\end{Definition}

Note that if $(x,I)$ and $(y,K)$ are adjacent chambers, by Proposition \ref{prop:simplewallcrossing}, they are related by simple wall crossings: $\omega_{a,I}(x,I) = (xv_{a,I},K)=(y,K)$, and so $\Hom_{\wt{\BH}}((x,I),(y,K)) = \{ v_{a,I} \}$.
More generally, the following result shows that the (unique) morphism between any two chambers can be obtained from a (non-unique) path between the chambers in the arrangement $(\Cone(J)^\circ,\cH^J)$, or equivalently, a sequence of simple wall crossings relating them.
\begin{Proposition}\label{prop:genWC}
    Given two chambers $(x,I), (y,K) \in \Cham(J)$, there exists a (not necessarily unique) path 
    \[
    p = \omega_{a_1,I_1}*\omega_{a_2,I_2}*\cdots*\omega_{a_m,I_m}: (x,I) \to (y,K)
    \]
    in $(\Cone(J)^\circ,\cH^J)$ so that $\Hom_{\widetilde{\BH}}((x,I),(y,K)) = \{v_p\}$, where $v_p = v_{a_1,I_1}v_{a_2,I_2}\dots v_{a_m,I_m} \in W$.
    In other words, the groupoid $\wt{\BH}$ is generated by morphisms $v_{a,I}$ between adjacent chambers.
\end{Proposition}
\begin{proof}
    Since the space of morphisms between any two object is a singleton, this is an immediate consequence of the fact that any two chambers are related by a finite sequence of simple wall crossings (Lemma \ref{lem:combinatoriallemmas}\eqref{item:chambersfinitewallcrossing}).
\end{proof}

\begin{Remark}\label{rem:obvious_functor}
    There is an obvious functor $\D^+ \to \wt{\BH}$ (hence also from $\D$) which is the identity on objects and sends a morphism $p:(x,I) \to (y,K)$ in $\D^+$, which is simply a path in $(\Cone(J)^\circ,\cH^J)$, to the morphism $v_p \in \Hom_{\wt{\BH}}((x,I),(y,K))$ as defined in Proposition \ref{prop:genWC}. 
    By construction, this functor is full, but it is almost never faithful (except in degenerate cases).
    For example, all paths that start and end at the same chamber $(x,I)$ in $\D^+$, trivial or not, are sent the identity morphism in $\Hom_{\wt{\BH}}((x,I),(x,I))$.
\end{Remark}

\begin{Proposition} \label{prop:hominWCandBH}Let $I,K$ be associates of $J$.
    \begin{enumerate}
        \item \label{item:WCtoBH} If $(x,I)$ and $(y,K)$ are chambers in $(\Cone(J)^\circ,\cH^J)$, then the element $x^{-1}y\in W$ lies in $\Hom_\BH(I,K)$.
        \item \label{item:uniquelift} Conversely, if $w$ lies in $\Hom_\BH(I,K)$, then for any chamber $(x,I)$ in $(\Cone(J)^\circ,\cH^J)$, we have $(xw,K) \in \Cham(J)$ so that $w \in \Hom_{\wt{\BH}}((x,I), (xw,K))$.
    \end{enumerate}
\end{Proposition}
\begin{proof}
    For \eqref{item:WCtoBH}, it suffices to show for all $(y,K) \in \Cham(J)$, we have $y \in \Hom_\BH(J,K)$. Point \eqref{item:WCtoBH} then follows via composition (and taking inverses).
    
    By the definition of the labeling of chambers, we have
    \[
    yC_K\subseteq \Theta_J,
    \]
    where $|J| = |K|$ and moreover $y$ has minimal length in $W_Jy = yW_K$.
    Thus for all $\varphi \in C_K$ and all $j \in J$, $(y\cdot\varphi)(\alpha_j) = \varphi(y^{-1}\cdot \alpha_j) = 0$.
    Since $y^{-1}\cdot \alpha_j$ is a root, which is either positive or negative, $y^{-1}\cdot \alpha_j$ must be a linear combination of $\alpha_k$ for $k \in K$.
    In particular, $y^{-1}\cdot \alpha_j$ is a root of $W_K$ for any $j \in J$.
    
    We claim that for any $j \in J$, $y^{-1}\cdot \alpha_j$ is a positive root.
    Indeed, since $y$ has minimal length in $W_Jy$,  $y^{-1}$ has minimal length in $y^{-1}W_J$.
    If $y^{-1}\cdot \alpha_j$ is a negative root, then $\ell(ys_j) < \ell(y)$, contradicting length-minimality.
    The same argument using length-minimality in $yW_K$ instead shows that $y\cdot \alpha_k$ are also all positive roots of the parabolic $W_J$.
    Now $y$ and $y^{-1}$ are linear isomorphisms between the vector spaces $\Span_\RR\{\alpha_j\}_{j\in J}$ and $\Span_\RR\{ \alpha_k\}_{k \in K}$, whose matrices in terms of these fixed bases are both non-negative matrices.
    This implies that the matrix of $y$ (and $y^{-1}$) is a permutation of a diagonal matrix with non-negative entries.
    It follows that the matrix of $y$ must be a permutation matrix, as non-identity positive scalar multiple of a simple root is not a root.
    As such, $y\cdot \{\alpha_k \mid k \in K\} = \{\alpha_j \mid j \in J\}$, which says that $y \in \Hom_\BH(J,K)$ by definition. 
    This completes the proof of \eqref{item:WCtoBH}

    For \eqref{item:uniquelift}, let $w \in \Hom_\BH(I,K)$. Then for all $i \in I$, there exists $k \in K$ such that $w^{-1}\cdot \alpha_i = \alpha_k$.
    It follows that $w \cdot C_K \subseteq \Theta_I$.
    Since $(x,I) \in \Cham(J)$,  we get $x \cdot \Theta_I = \Theta_J$,
    which implies that $xw\cdot C_K \subseteq \Theta_J$.
    We claim that $xw$ is minimal in $xwW_K$.
    Indeed, if $xw$ is not minimal, then there is some $k\in K$ such that $xw(\alpha_k) = x(\alpha_i) \in \Phi^-$, which contradicts the length-minimality of $x$ in $xW_I$.
\end{proof}

\begin{Theorem}\label{thm:WCuniversalcover}
    Let $\Gamma$ be a Coxeter diagram with associated Coxeter group $W$, and let $J$ be a subset of the vertex set $\Gamma_0$. Let $\BH$ be the Brink--Howlett groupoid associated to $J$ and let $\wt{\BH}$ be the universal groupoid introduced in Definition \ref{defn:universalgroupoid}. We define a functor $\cF:\widetilde{\BH}\to \BH$ by setting
    \begin{itemize}
        \item For all $(x,I)\in \Cham(J)$,~ $\cF(x,I)\coloneq I$.
        \item For $x^{-1}y\in \Hom_{\wt{\BH}}((x,I),(y,K))$, $\cF(x^{-1}y)\coloneq x^{-1}y\in \Hom_{\BH}(I,K)$. 
    \end{itemize}
    The functor $\cF$ is a universal groupoid covering and $$\Deck(\cF) \cong N(W,J).$$
    
    
\end{Theorem}
\begin{proof}The fact that $\cF$ is a well-defined functor comes from Proposition \ref{prop:hominWCandBH}\eqref{item:WCtoBH}.
 
    Existence of path lifting follows from Proposition \ref{prop:hominWCandBH}\eqref{item:uniquelift}. Note that uniqueness comes for free since morphism spaces in $\wt{\BH}$ are singletons, from which it follows too that 
    the covering is universal.

    Next we show that the deck transformation group is $N(W,J)$. By Lemma \ref{lem:normaliseraction} we can endow $\wt{\BH}$ with an action of $N(W,J)$. Indeed, the lemma describes the action on objects, and since morphism spaces are singletons this extends uniquely to an action on the groupoid. Then $\cF$ is $N(W,J)$-equivariant, where $N(W,J)$ acts trivially on $\BH$, i.e. $N(W,J)$ acts as deck transformations of $\cF$. Since $N(W,J)$ acts faithfully on $\wt{\BH}$, this shows that $N(W,J) \leq \Deck(\cF)$.

    Conversely, suppose $\varphi \in \Deck(\cF)$. Then we obtain maps on the fibres $\varphi: \cF^{-1}(I) \to \cF^{-1}(I)$ for all associates $I$ of $J$. 
    Set $(g, J) \coloneqq\varphi(e,J) \in \cF^{-1}(J)$, so that $g$ has minimal length in $gW_J$.
    We claim that
    \[ \forall (x,I)\in \Cham(J),~ \varphi(x,I)=g.(x,I)=(gx,I).\]
    Let $(x,I)$ be a chamber, and write $(x',I)\coloneq \varphi(x,I)$. Consider the morphism $x \in \Hom_{\wt{\BH}}((e,J),(x,I))$. Under the composite functor $\cF\circ \varphi$, $x$ is mapped to $g^{-1}x' \in \Hom_{\BH}(J,I)$. Since $\varphi$ is a deck transformation this morphism must be equal to $x \in \Hom_{\BH}(J,I)$. Hence $x'=gx$, and we deduce that $\varphi(x,I) = (gx,I) = g\cdot(x,I)$ as claimed. 
    It follows that $\varphi$ agrees with the action of $gW_J \in N(W,J)$ on $\wt{\BH}$, and so $N(W,J) = \Deck(\cF)$.
\end{proof}

As a corollary we deduce a classical result of Lusztig \cite[Lemma 5.2]{Lusztig76} (which was also independently proved by Howlett \cite{Howlett80}). Set $N_J$ to be the vertex group $\BH_J$, i.e. 
$$
N_J=\{ x\in W \mid (\forall j \in J)(\exists j' \in J) \; x\cdot \alpha_j =\alpha_{j'}  \}.
$$
Note that $N_J$ is by definition a subgroup of $\norm_W(W_J)$.

\begin{Corollary}\label{cor:equivalence_brink_howlett}
The vertex group $N_J$ is isomorphic to $N(W,J)$ and moreover the normaliser of $W_J$ in $W$ is a semidirect product: $\norm_W(W_J) \cong W_J\rtimes N_J$.
\end{Corollary}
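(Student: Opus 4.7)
The plan is to deduce both assertions from Theorem \ref{thm:WCuniversalcover} combined with the short exact sequence \eqref{eq:sesfornormalcover} associated to a normal groupoid covering. Applied to the universal cover $\cF:\wt{\BH}\to\BH$, whose deck group is $N(W,J)$, this sequence collapses (as the vertex groups of $\wt{\BH}$ are trivial) to an isomorphism
\[
N_J = \BH_J \;\xrightarrow{\sim}\; \Deck(\cF) \;\cong\; N(W,J).
\]
This gives the first assertion on the level of abstract groups; the work is then to identify this isomorphism with the restriction of the canonical projection $\norm_W(W_J)\to N(W,J)$.

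To perform that identification, I would first check that for each $w\in N_J$ the pair $(w,J)$ is a valid label of a chamber in $\Cham(J)$. Since $w$ sends $\{\alpha_j\}_{j\in J}$ bijectively to itself, each $w\alpha_j$ is a positive root, hence $\ell(ws_j)>\ell(w)$ for all $j\in J$, so $w$ has minimal length in $wW_J$; the equality $W_Jw=wW_J$ follows from $ws_jw^{-1}\in W_J$, which shows in passing that $N_J\subseteq \norm_W(W_J)$. Consequently $w\in \Hom_{\wt{\BH}}((e,J),(w,J))$ is the unique $\cF$-lift of $w\in \BH_J$ starting at $(e,J)$, and the argument in the proof of Theorem \ref{thm:WCuniversalcover} shows that the corresponding deck transformation is the action of $wW_J\in N(W,J)$. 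Thus the isomorphism above is precisely $w\mapsto wW_J$, i.e.\ the inclusion $N_J\hookrightarrow\norm_W(W_J)$ followed by projection.

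From here the semidirect product decomposition is formal. Injectivity of $N_J\to N(W,J)$ gives $N_J\cap W_J=\{e\}$, surjectivity gives $\norm_W(W_J)=W_J\cdot N_J$, and $W_J$ is normal in $\norm_W(W_J)$ by definition, so $\norm_W(W_J)\cong W_J\rtimes N_J$.

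The only delicate step is the compatibility check in the middle paragraph; it is essentially bookkeeping inside the proof of Theorem \ref{thm:WCuniversalcover}, but it genuinely uses the minimal-length verification, without which the endpoint of the lift of $w$ in $\wt{\BH}$ would not coincide with the labelled chamber $(w,J)$ and the identification of the deck transformation with $wW_J$ would not apply directly.
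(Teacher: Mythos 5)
Your proposal is correct and follows essentially the same route as the paper: apply the short exact sequence \eqref{eq:sesfornormalcover} to the universal cover $\cF$ to get $N_J\cong\Deck(\cF)\cong N(W,J)$, then identify this isomorphism with the composite $N_J\hookrightarrow\norm_W(W_J)\to N(W,J)$ by tracking where the deck transformation attached to $g\in N_J$ sends $(e,J)$, and conclude the semidirect product. Your explicit verification that $(w,J)\in\Cham(J)$ for $w\in N_J$ is a detail the paper leaves implicit, but the argument is the same.
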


\begin{proof}
    Applying \eqref{eq:sesfornormalcover} to the universal cover $\cF:\wt{\BH}\to \BH$, we obtain that $N_J \cong \Deck(\cF)$. By Theorem \ref{thm:WCuniversalcover} this implies that $N_J \cong N(W,J) = \norm_W(W_J)/W_J$, which gives us the short exact sequence
    $$
    1 \to W_J \to \norm_W(W_J) \to N_J \to 1.
    $$
    We claim that the sequence above splits.
    For this it suffices to show that the inclusion $N_J \subseteq \norm_W(W_J)$ followed by the quotient onto $N(W,J)$ agrees with the isomorphism $N_J \cong \Deck(\cF) \cong N(W,J)$.
    To this end, note that the isomorphism $N_J \cong \Deck(\cF)$ sends $g \in N_J \mapsto \varphi \in \Deck(\cF)$ with the defining property that $\varphi(e,J) = (g,J)$.
    This in turn dictates that $\varphi \mapsto gW_J \in N(W,J)$ (see proof of Theorem \ref{thm:WCuniversalcover}), as required.
\end{proof}

\begin{Definition}[Section 2, \cite{BH}]\label{def:stdexp}
Let $I,K \in \BH$.
A  \textbf{standard expression} of $w \in  \Hom_\BH(I,K)$ is a decomposition of $w$ into a product of composable morphisms in $\BH$ of the form:
    $$
    w=v_{a_1,I_1}\cdots v_{a_m,I_m},
    $$
where $I=I_1$ and $\ell(w) = \sum_{i=1}^m \ell(v_{a_i,I_i})$. 
We also set $\ell_\BH(w)\coloneq m$ and call it the \textbf{standard expression length} of $w$. We will show below that any morphism $w \in \Hom_\BH(I,K)$ has a standard expression (Lemma \ref{lem:stdexp-geodesic}), and that its standard expression length $\ell_\BH(w)$ is independent of the choice of standard expression (Corollary \ref{cor:BHthmA}).
\end{Definition}

The following lemma relates standard expressions with geodesic paths in $(\Cone(J)^\circ,\cH^J)$.
\begin{Lemma}\label{lem:stdexp-geodesic}
    Let $\omega_{a_j,I_j}$ be simple wall crossings in $(\Cone(J)^\circ,\cH^J)$ for $1\leq j \leq m$.
    Then
    \begin{equation} \label{eqn:path}
    p \coloneqq  \omega_{a_1,I_1}*\omega_{a_2,I_2}*\cdots*\omega_{a_m,I_m}
    \end{equation}
    is a well-defined geodesic path 
    if and only if 
    \begin{equation} \label{eqn:pathstdexp}
    v_p=v_{a_1,I_1}v_{a_2,I_2}\dots v_{a_m,I_m}
    \end{equation}
    is a standard expression for $v_p$ in $\BH$ (with domain $I_1$).
    In particular, any $w \in \Hom_\BH(I,K)$ has a standard expression.
\end{Lemma}
\begin{proof}
    Suppose the path $p$ in \eqref{eqn:path} is a well-defined geodesic path from $(x,I)$ to $(y,K)$.
    Then $p$ corresponds to a sequence of simple wall-crossings relating $(x,I)$ to $(y,K)$, and hence $y=xv_p$.
    By Proposition \ref{prop:hominWCandBH}\eqref{item:WCtoBH} it follows that $v_p = x^{-1}y$ is in $\Hom_\BH(I,K)$.
    Since $p$ is geodesic, Proposition \ref{prop:geodesicsandwordlength} guarantees that \eqref{eqn:pathstdexp} is a standard expression for $v_p$.

    Conversely, suppose we have $v_p\in \Hom_\BH(I,K)$ with standard expression given by \eqref{eqn:pathstdexp}. 
    Since $\widetilde{\BH}$ is the universal cover of $\BH$ by Theorem \ref{thm:WCuniversalcover}, the sequence of morphisms in \eqref{eqn:pathstdexp} can be uniquely lifted to a sequence of composable morphisms in $\widetilde{\BH}$ with the first one having source $(x,I)$.
    By uniqueness, each $v_{a_i,I_i}$ is lifted to a morphism between adjacent chambers in $\widetilde{\BH}$ related by the wall-crossing $\omega_{a_i,I_i}$.
    This shows that the path $p$ as given in \eqref{eqn:path} is well-defined.
    The fact that $p$ is geodesic again follows from Proposition \ref{prop:geodesicsandwordlength}.

    Finally, given $w \in \Hom_\BH(I,K)$, a standard expression for $w$ can be obtained by lifting $w$ to a morphism $w \in \Hom_{\wt{\BH}}((x,I),(xw,K))$ using Theorem \ref{thm:WCuniversalcover}, where any geodesic path $p$ in $(\Cone(J)^\circ, \cH^J)$ from $(x,I)$ to $(xw,K)$ will provide a standard expression of $w = v_p$.
\end{proof}

We now deduce a part of \cite[Theorem A]{BH} from Brink--Howlett (cf.~Section \ref{sec:atomicM} below for the statement of the full theorem):
\begin{Corollary} \label{cor:BHthmA}
    \begin{enumerate}
        \item \label{item:BHgens}
        The groupoid $\BH$ is generated by morphisms of the form $$v_{a,I}:I \to I+a-\iota_{[I,a]}(a),$$ for $I \in \BH$ and $a \in I^c$ such that $W_{[I,a]}$ is a finite parabolic subgroup. 
        \item \label{item:stdexpsamelength} Let $I,K \in \BH$. Any two standard expressions of a morphism $w \in \Hom_\BH(I,K)$ are of the same length, i.e.\ if $$v_{a_1,I_1}v_{a_2,I_2}\cdots v_{a_m,I_m} = v_{b_1,L_1}v_{b_2,L_2} \dots  v_{b_n,L_n}$$ are two standard expressions with $I=I_1=L_1$, then $m=n$.
    \end{enumerate}
\end{Corollary}
\begin{proof}
    The first statement follows immediately from the groupoid covering $\cF:\wt{\BH} \to \BH$ and the fact that elements $v_{a,I}$ generate $\wt{\BH}$. Note that the finiteness condition arises from Proposition \ref{prop:simplewallcrossing}(2).

    For the second statement, let $w \in \Hom_\BH(I,K)$ and pick some $(x,I) \in \Cham(J)$. Lemma \ref{lem:stdexp-geodesic} shows that the two standard expressions of $w$ lift to two geodesic paths:
    \[
    \omega_{a_1,I_1}*\omega_{a_2,I_2} *\cdots * \omega_{a_m,I_m}; \qquad \omega_{b_1,L_1}*\omega_{b_2,L_2} * \cdots *\omega_{b_n,L_n},
    \]
    which both start $(x,I)$ (note that $I=I_1=L_1)$ and end at the same chamber $(xw,K)$.
    Since geodesic paths between two chambers have the same length, it follows that $m=n$.
\end{proof}

\begin{Remark}\label{rmk:BHlen}
Let $x=yz$ be an equality of morphisms in $\BH$. Suppose that  $\ell(x)=\ell(y)+\ell(z)$ and $\ul{y},\ul{z}$ are respective standard expressions for $y,z$. Then it is immediate that the concatenation of $\ul{y}$ and $\ul{z}$ is a standard expression of $x$, which we denote by $\ul{y}\ul{z}$.
\end{Remark}

\subsection{Brink and Howlett's Theorem A and the atomic Matsumoto theorem}\label{sec:atomicM}
The result \cite[Theorem A]{BH} provides a presentation by generators and relations for the groupoid $\BH$. The generators are precisely those that appear in Corollary \ref{cor:BHthmA}, and the relations are similar to that of a Coxeter group presentation (see Remark \ref{rem:analogy_brink_howlet_coxeter_presentation}). In this section we will further this analogy by showing an analogue of Mastumoto's theorem on reduced expressions for this presentation.

Before we are able to describe the presentation, we recall the following result from \cite{BH}. 

\begin{Lemma} \label{lem:BHuniquelcm}
    Let $I$ be an associate of $J$.
    Suppose we have distinct elements $a,b \in I^c$ such that $|\Phi_{I+a+b} - \Phi_I| < \infty$.
    Then there exists an associate $K$ of $J$ and a morphism $v_{a,b,I}$ in $\Hom_{\BH}(I,K)$ such that:
    \begin{enumerate}
        \item \label{item:stdexp} $v_{a,I}$ and $v_{b,I}$ are left divisors of $v_{a,b,I}$ in $W$, and $v_{a,b,I}$  has exactly two standard expressions (of necessarily the same length):
                \begin{equation} \label{eqn:lcmstdexp}
                v_{a,b,I} =
                \begin{cases}
                v_{a_1,I_1}v_{a_2,I_2}\cdots v_{a_m,I_m},\\
                v_{b_1,L_1}v_{b_2,L_2}\cdots v_{b_m,L_m},
                \end{cases}
                \end{equation}
              with $a_1=a, b_1=b$ and $I_1=I=L_1$;
        \item \label{item:lcm} If $w \in \Hom_{\BH}(I,K')$ admits both $v_{a,I}$ and $v_{b,I}$ as left divisors, then $v_{a,b,I}$ is a left divisor of $w$.
    \end{enumerate}
    In other words, $v_{a,b,I}$ is minimal (with respect to the standard expression length and with respect to left divisibility) among the morphisms in $\BH$ of which $v_{a,I}$ and $v_{b,I}$ are both left divisors.
\end{Lemma}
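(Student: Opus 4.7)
The plan is to analyse the local geometry of $(\Cone(J)^\circ, \cH_J)$ around a codim-$2$ face. Choose any chamber $(x_1, I) \in \Cham(J)$, and let $F \coloneqq \overline{x_1 \cdot C_{I+a+b}}$ be the codim-$2$ face where the two walls of $(x_1, I)$ crossed by $\omega_{a,I}$ and $\omega_{b,I}$ meet. Both crossings are defined because the hypothesis implies $|\Phi_{I+a} - \Phi_I|, |\Phi_{I+b} - \Phi_I| \leq |\Phi_{I+a+b} - \Phi_I| < \infty$, so Lemma \ref{lem:combinatoriallemmas}\eqref{item:Jrootsfiniteness} forces $W_{[I,a]}$ and $W_{[I,b]}$ to be finite. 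Translating the hypothesis geometrically, a hyperplane $H_\beta \cap \Theta_J \in \cH_J$ contains $F$ if and only if $x_1^{-1}\beta \in V_{I+a+b} \setminus V_I$, so the hyperplanes through $F$ are in natural bijection (up to sign) with the set enumerated by $|\Phi_{I+a+b} - \Phi_I|$. Let $m \geq 2$ denote their number, which is finite by hypothesis.

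Since these $m$ hyperplanes all contain the codim-$2$ face $F$, they project to $m$ distinct lines through the origin in the two-dimensional normal space $N_F$ of $F$ inside $\Theta_J$. The corresponding reflections in $W$ lie in the parabolic subgroup $W_L$, where $L \subseteq I+a+b$ is the union of the connected components of $\Gamma(I+a+b)$ containing $a$ or $b$; applying Lemma \ref{lem:combinatoriallemmas}\eqref{item:Jrootsfiniteness} iteratively (first to $I \subseteq I+a$, then to $I+a \subseteq I+a+b$) shows that $W_L$ is finite. Consequently the image of these $m$ reflections in $O(N_F)$ generates a finite dihedral group $D_m$, and the local picture around $F$ is partitioned into $2m$ cyclically arranged sectors, with the two walls bounding the sector of $(x_1, I)$ being $H_{x_1\alpha_a}$ and $H_{x_1\alpha_b}$.

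Let $C_{\mathrm{op}} = (y,K)$ be the unique chamber with $F$ in its closure lying in the sector diametrically opposite that of $(x_1, I)$, and set $v_{a,b,I} \coloneqq x_1^{-1}y \in \Hom_\BH(I,K)$, the membership following from Proposition \ref{prop:hominWCandBH}\eqref{item:WCtoBH}. Any hyperplane not containing $F$ has $F$ on one of its strict sides, so both chambers touching $F$ must lie on that same side. Therefore the walls separating $(x_1,I)$ from $C_{\mathrm{op}}$ are exactly the $m$ hyperplanes through $F$, and every geodesic between them has length $m$. In the dihedral local picture there are precisely two such geodesics, one beginning with $\omega_{a,I}$ and one beginning with $\omega_{b,I}$; applying Proposition \ref{prop:geodesicsandwordlength} converts these into the two standard expressions of $v_{a,b,I}$ required by (1), with respective leading factors $v_{a,I}$ and $v_{b,I}$.

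For the minimality in (2), suppose $w \in \Hom_\BH(I,K')$ has both $v_{a,I}$ and $v_{b,I}$ as left divisors. Then the chamber $(x_1 w, K')$ is separated from $(x_1,I)$ by both $H_{x_1\alpha_a}$ and $H_{x_1\alpha_b}$. A direct sign-vector computation in the dihedral arrangement shows that the only sector on the opposite side of both adjacent walls is the diametrically opposite one, and that this sector is on the opposite side of \emph{all} $m$ walls through $F$. Hence every hyperplane through $F$ separates $(x_1,I)$ from $(x_1 w, K')$; since these are exactly the separating walls for $(x_1,I)$ and $C_{\mathrm{op}}$, the standard inversion-set criterion for the left weak Bruhat order implies $v_{a,b,I}$ is a left divisor of $w$. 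The step I expect to be the most subtle is the second paragraph---establishing that the local reflections on $N_F$ genuinely generate a finite dihedral group---which combines the hyperplane count from the first paragraph with the finite-parabolic consequence of Lemma \ref{lem:combinatoriallemmas}\eqref{item:Jrootsfiniteness}.
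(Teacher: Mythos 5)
Your approach is genuinely different from the paper's: the paper proves this lemma purely by citation, importing the existence and the two standard expressions of $v_{a,b,I}$ from the ``Type R2'' discussion in \cite{BH} and part (2) from \cite[Lemma 4.1]{BH}, whereas you attempt a direct geometric argument around the codimension-two face $F=\overline{x_1\cdot C_{I+a+b}}$. Unfortunately the argument has a genuine gap at exactly the step you flag as subtle. The assertion that ``the local picture around $F$ is partitioned into $2m$ cyclically arranged sectors'' is really two claims: (a) the $m$ hyperplanes through $F$ cut the $2$-plane $N_F=\Theta_J/\mathrm{Span}(F)$ into $2m$ sectors (pure linear algebra, fine), and (b) \emph{every one of these sectors is occupied by a chamber of $\Cham(J)$}, equivalently the relative interior of $F$ lies in $\Cone(J)^\circ$ so that the chambers wrap all the way around $F$ and the antipodal chamber $C_{\mathrm{op}}$ exists. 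Claim (b) is the entire content of the lemma's existence statement, and your justification for it fails: the reflections $s_\gamma$ for $\gamma\in x_1(\Phi_{I+a+b}-\Phi_I)$ do \emph{not} preserve $\Theta_J$ (only elements of $\norm_W(W_J)$ do), so they induce no action on $N_F$ and there is no ``image of these $m$ reflections in $O(N_F)$'' generating a dihedral group. They do generate a finite reflection subgroup of $W_L$ acting on $\Theta/\mathrm{Span}(F)$, but that space has dimension $|J|+2$, not $2$, and its chamber count is irrelevant to the arrangement in $\Theta_J$. Without (b), $C_{\mathrm{op}}$ may a priori fail to exist, and both the two-geodesic count in part (1) and the sign-vector argument in part (2) (which needs every geodesic, and the chamber $(x_1w,K')$'s sector, to be read off the full $2m$-sector fan) collapse.

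The gap is fillable, but it requires an actual argument that $x_1\cdot C_{I+a+b}\subseteq \Cone(J)^\circ$. One route: for $\varphi_0\in\Theta_I$ close to a point of $C_{I+a+b}$, use that $W_L$ is finite (your iterated application of Lemma \ref{lem:combinatoriallemmas}\eqref{item:Jrootsfiniteness} for this is correct) so that the Tits cone of $W_L$ acting on $V_L^*$ is everything; choose $g\in W_L$ with $\varphi_0(g^{-1}\alpha_l)\ge 0$ for all $l\in L$, note $W_L$ fixes $\alpha_j$ for $j\in (I+a+b)\setminus L$ and sends $\alpha_j$ for $j\notin I+a+b$ to positive roots in $\alpha_j+V_L$ with bounded coefficients, and conclude $g\varphi_0\in\overline{C}$. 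Alternatively, cite the relevant structural result from \cite{IW} characterising $\Cone(J)^\circ$. As written, though, the central geometric input is asserted rather than proved, and it is precisely the input the paper outsources to Brink--Howlett.
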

\begin{proof}
Under the assumption $|\Phi_{I+a+b} - \Phi_I| < \infty$, the existence of $v_{a,b,I}$ and its two standard expressions in \eqref{eqn:lcmstdexp} follow from the discussion on ``\textit{Type R2}'' relations in \cite[pg 326]{BH} (precisely, $v_{a,b,I}^{-1} = v[\{a,b\},I]$ in the notation of \cite{BH}; see Remark \ref{rmk:oppositev}).
This combined with \cite[Proposition 2.4]{BH} gives statement \eqref{item:stdexp}.
Statement \eqref{item:lcm} is a translation of \cite[Lemma 4.1]{BH}.
\end{proof}
\begin{Remark}
    The inverse of $v_{a,b,I}$  is characterised as the unique element $w \in W_{I+a+b}$ satisfying $w(\Phi^+_{I+a+b} - \Phi^+_I) \subseteq \Phi^-_{I+a+b}$ and $w(\Phi^+_I) \subseteq \Phi^+_{I+a+b}$.
    See discussion after Proposition 2.1 in \cite{BH}.
\end{Remark}



\begin{Definition} \label{def:atomicbraidrel}
In the setting of Lemma \ref{lem:BHuniquelcm} the relation given by equating the two expressions on the right hand side of \eqref{eqn:lcmstdexp} is called an \textbf{atomic braid relation}.  
\end{Definition}

We can now state \cite[Theorem A]{BH}.

\begin{Theorem}[Theorem A, \cite{BH}]\label{thm:BHthmA}
    Let $\Gamma$ be a Coxeter diagram with associated Coxeter group $W$, and let $J$ be a subset of the vertex set $\Gamma_0$. The Brink--Howlett groupoid $\BH$ associated to $J$ is generated by the morphisms $v_{a,I}$ from Corollary \ref{cor:BHthmA}(\ref{item:BHgens}), with defining relations given by all the atomic braid relations and the quadratic relations $v_{a,I}v_{a',K} = e$ for $a' = \iota_{[I,a]}(a)$ and $ K = I + a - a'$ (cf.\ Proposition \ref{prop:simplewallcrossing}).
\end{Theorem}

\begin{Remark}\label{rem:analogy_brink_howlet_coxeter_presentation}
    In the presentation above, the relations are similar to quadratic relations and braid relations for the Coxeter presentation of Coxeter groups. 
    This analogy is not a coincidence, as the presentation of Brink--Howlett groupoids generalises the presentation of Coxeter groups. Indeed if we take $J=\varnothing$, then for every $a\in \Gamma_0$, we have $v_{a,\emptyset}=s_a$ by Remark \ref{rem:v_{a,varnothing}}. The morphism $v_{a,b,I}=v_{a,b,\emptyset}$ exists if and only if $ab$ has finite order (say $m$) in $W$, and $v_{a,b,\emptyset}$ is then given by
    \[v_{a,b,\emptyset}=s_as_bs_a\cdots=s_bs_as_b\cdots.\]
    where both words have length $m$. 
\end{Remark}

For Coxeter groups, the classical Matsumoto theorem states that any two reduced expressions for an element $w \in W$ are related by a sequence of braid relations. We now prove a generalisation of this for standard expressions in the sense of Definition \ref{def:stdexp}. 

\begin{Theorem}[``Atomic Matsumoto theorem''] \label{thm:atomicMat}
Let $\Gamma$ be a Coxeter diagram, and let $J$ be a subset of the vertex set $\Gamma_0$. Let  $\BH$ be the Brink--Howlett groupoid associated to $J$, and let $I,K$ be associates of $J$. Any two standard expressions of $w \in \Hom_{\BH}(I,K)$ are related by a sequence of atomic braid relations.
\end{Theorem}

\begin{proof}
    Using Lemma \ref{lem:BHuniquelcm}, the proof is essentially the same as the proof for the classical case; cf.\ \cite[Theorem 3.3.1]{BB_combinatoricsCox}.  We induct on $\ell_\BH(w)$. If $\ell_\BH(w)=1$ then  $w$ has only one standard expression and the result is trivial. Suppose then that $\ell_\BH(w)>1$ and the result holds for any $x$ such that  $\ell_\BH(x) < \ell_\BH(w)$. 
    
    Consider two standard expressions for $w$, the first of which starts with $v_{a,I}$ and the second starts with $v_{b,I}$ (note that any standard expression for $w$ has to start with $v_{?,I}$). We denote the two standard expressions for $w$ by $v_{a,I}\ul{w}_1$ and $v_{b,I}\ul{w}_2$, so here $\ul{w}_i$ denotes a morphism with a chosen standard expression for $w_i$.
    Then $v_{a,I}$ and $v_{b,I}$ are both left divisors of $w$, and hence by Lemma \ref{lem:BHuniquelcm}, $v_{a,b,I}$ is also a left divisor of $w$. Set $u=v_{a,b,I}^{-1}w$ (so that $u \in \Hom_\BH(K',K)$ if $v_{a,b,I} \in \Hom_\BH(I,K')$).
    
    Let $v_{a,I}\ul{x}_1$ and $v_{b,I}\ul{x}_2$ be the two standard expressions for $v_{a,b,I}$ from Lemma \ref{lem:BHuniquelcm}. Let $\ul{u}$ be any standard expression for $u$.
     By Remark \ref{rmk:BHlen} $\ul{w}_1$ and $\ul{x}_1\ul{u}$ are both standard expressions for $v_{a,I}^{-1}w$. 
    Therefore by induction $\ul{w}_1$ is related to $\ul{x}_1\ul{u}$ by a sequence of atomic braid relations. Therefore $v_{a,I}\ul{w}_1$ and $v_{a,I}\ul{x}_1\ul{u}$ are also related by a atomic braid relations. Similarly $v_{b,I}\ul{w}_2$ and $v_{b,I}\ul{x}_2\ul{u}$ are also related by atomic braid relations. Since $v_{a,I}\ul{x}_1\ul{u}$ and $v_{b,I}\ul{x}_2\ul{u}$ are also clearly related by atomic braid relations, we conclude the same for $v_{a,I}\ul{w}_1$ and $v_{b,I}\ul{w}_2$.    
\end{proof}

\begin{Remark}[Relation of Theorem \ref{thm:atomicMat} to Ko's \protect{\cite[Theorem 1.8]{Ko25}}]\label{rem:relation_to_ko}
    The atomic Matsumoto theorem was stated in \cite{Ko25} in terms of core and atomic (double) cosets.
    We only provide a brief translation here, referring the keen reader to \cite{Ko25} for details.
    
    A \textit{core (double) coset} \cite[Definition 1.1]{Ko25} is a double coset $W_IxW_K$ satisfying $W_Ix = W_IxW_K = xW_K$; cf.\ \eqref{eqn:chamberlabels}.
    An \textit{atomic (double) coset} \cite[Definition 1.6]{Ko25} is a core coset of the form $W_{L-a}w_LW_{L-a'}$ for $a,a' \in L \subseteq \Gamma_0$.
    To match our notation, set $I \coloneqq L-a$, so that $L = I+a$ and $L-a' = I+a-a'$. 
    The core coset condition guarantees that $a' = \iota_{[I,a]}(a)= \iota_{I+a}(a)$; the second equality holds by Lemma \ref{lem:combinatoriallemmas} under the condition that $W_{I+a} = W_L$ is finite, which is always assumed in \cite{Ko25}.
    
    Now, it is clear that $w_L$ is not the minimal length representative of the atomic coset $W_{L-a}w_LW_{L-a'}$. This is instead given by $v_{a,I}= w_Iw_{I+a}=w_{L-a}w_L$.
    More generally, by taking minimal double coset representatives, the set of core cosets of the form $W_IxW_K$ are in bijection with $\Hom_\BH(I,K)$, with the atomic cosets $W_Iw_{I+a}W_{I+a-a'}$ corresponding to the generators $v_{a,I}$.
    Under this correspondence: 
    \begin{itemize}
        \item An \textit{atomic reduced expression} \cite[Definition 3.9]{Ko25} of a core coset $W_IxW_K$ corresponds to a standard expression of $x \in \Hom_\BH(I,K)$ (with $x$ minimal in $W_IxW_K$).
        \item An \textit{atomic braid relation} \cite[Definition 4.10]{Ko25} for core cosets corresponds to our Definition \ref{def:atomicbraidrel} for morphisms in $\BH$.
        \item The \textit{atomic Matsumoto theorem} \cite[Theorem 1.8]{Ko25} for atomic reduced expressions corresponds to our Theorem \ref{thm:atomicMat} for standard expressions. Note, however, that our theorem is more general, as we do not require the finitary assumption assumed in \cite{Ko25}, i.e.\ $W_L=W_{I+a}$ need not be finite.
    \end{itemize}
\end{Remark}

We also translate the results above into the language of wall-crossings and hyperplane arrangements, as they answer a question posed by Iyama--Wemyss in \cite[Remark 1.62]{IW}.
We begin with the following translation of Lemma \ref{lem:BHuniquelcm}.
\begin{Lemma}\label{lem:arrangementBHlcm}
    Let $a,b \in I^c$
    be distinct elements such that $|\Phi_{I+a+b} - \Phi_I| < \infty$, and let $(x,I) \in \Cham(J)$. Then there exists $K \subseteq \Gamma_0$ such that:
    \begin{enumerate}
        \item \label{item:meet} In the hyperplane arrangement $(\Cone(J)^\circ, \cH^J)$, we have two sequences of wall-crossings of common length that starts at $(x,I)$ and ends at $(xv_{a,b,I},K)$:
        \begin{equation} \label{eqn:meet}
        \begin{tikzcd}[row sep=small, column sep = small]
	   & {(x_2,I_2)} & {(x_3,I_3)} & \cdots & {(x_{m},I_{m})} \\
	   {(x,I)} &&&&& {(xv_{a,b,I},K)} \\
	   & {(y_2,L_2)} & {(y_3,L_3)} & \cdots & {(y_{m}, L_{m})}
	   \arrow[from=1-2, to=1-3]
	   \arrow[from=1-3, to=1-4]
	   \arrow[from=1-4, to=1-5]
	   \arrow[from=1-5, to=2-6]
	   \arrow["{\omega_{a_1,I}}", from=2-1, to=1-2]
	   \arrow["{\omega_{b_1,I}}"', from=2-1, to=3-2]
	   \arrow[from=3-2, to=3-3]
	   \arrow[from=3-3, to=3-4]
	   \arrow[from=3-4, to=3-5]
	   \arrow[from=3-5, to=2-6]
        \end{tikzcd}.
        \end{equation}
        Here $x_{i+1} = x_{i}v_{a_i,I_i}$ and $y_{i+1} = y_{i}v_{b_i,L_i}$, with $x_1=y_1=x$ and $I_1=L_1=I$, and $$v_{a_1,I_1}\cdots v_{a_m,I_m},\quad v_{b_1,L_1}\cdots v_{b_m,L_m}$$ are the two standard expressions of $v_{a,b,I}$ in \eqref{eqn:lcmstdexp}.
        Moreover, these two sequences, viewed as paths in $(\Cone(J)^\circ, \cH^J)$,  are the only two geodesic paths from $(x,I)$ to $(xv_{a,b,I},K)$.
        \item \label{item:mingeodesic} 
        Let $C$ be a chamber in $(\Cone(J)^\circ,\cH^J)$ such that there exists geodesic paths $p_a = \omega_{a,I}*p'_a:(x,I) \to C$ and $p_b = \omega_{b,I}*p'_b: (x,I) \to C$.
        Then there exists a geodesic path $p:(x,I) \to C$ that passes through the chamber $(xv_{a,b,I},K)$.
        \end{enumerate}
\end{Lemma}
\begin{proof}
    This is a direct translation of Lemma \ref{lem:BHuniquelcm} using the correspondence between standard expressions in $\BH$ and geodesic paths in $(\Cone(J)^\circ,\cH^J)$ from Lemma \ref{lem:stdexp-geodesic}.
\end{proof}

The following is the analogue of atomic braid relation in this setting; cf.\ Definition \ref{def:atomicbraidrel}.
\begin{Definition}[\protect{\cite[Definition 1.58]{IW}}] \label{def:codim2rel}
In the setting of Lemma \ref{lem:arrangementBHlcm}, the relation given by equating the two geodesic paths in \eqref{eqn:meet} is called a \textbf{codimension two relation}.  
\end{Definition}

The atomic Matsumoto theorem (Theorem \ref{thm:atomicMat}) then translates to the following (again via Lemma \ref{lem:stdexp-geodesic}):
\begin{Corollary} \label{cor:arrangement_atomicMat}
    Any two geodesic paths $p,p':(x,I) \to (y,K)$ in $(\Cone(J)^\circ,\cH^J)$ are related by a finite sequence of codimension two relations.
\end{Corollary}
\begin{Remark}
 Corollary \ref{cor:arrangement_atomicMat} allows one to generalise \cite[Theorem 1.60]{IW} to the ``uniformly weakly Dynkin setting'', answering a question posed by Iyama and Wemyss. See \cite[Remark 1.62]{IW} for the precise details.
\end{Remark}

\section{The reduced ribbon groupoid} \label{sec:reducedribgroupoid}
In this section we recall the construction of the reduced ribbon groupoid associated to a standard parabolic subgroup of a spherical-type Artin--Tits group, which will play a crucial role in our proof of Theorem \ref{thm:main1v2}. This groupoid was introduced by Godelle in the more general setting of standard parabolic subgroup of a Garside group. We stress that we need to impose the spherical-type assumptions from now on.

In this section, we fix a spherical Coxeter diagram $\Gamma$ with associated Coxeter group $W$. We also fix a subset $J$ of the vertex set $\Gamma_0$. We otherwise keep the notation of the previous sections. 

We first recall the definition of an auxiliary (larger) groupoid:
\begin{Definition}[\cite{Godelle_RibbonII}]
   Let $\Gamma$ be a spherical Coxeter diagram and let $J$ be a subset of the vertex set $\Gamma_0$. The \textbf{ribbon groupoid}  $\R' \coloneqq \R'(J)$ associated to $J$ is defined as follows.
    \begin{itemize}
        \item The objects are the subsets of $\Gamma_0$ which are associates of $J$ (see Definition \ref{def:associate}).
        \item $\Hom_{\R'}(I,K) \coloneqq \{ \beta \in A \mid A_I^+\beta=\beta A_K^+ \}$. Here $A_I^+$ is the submonoid of $A_I$ generated by the $\sigma_{i}$ for $i\in I$. Composition is given by multiplication in $A$ (on the right).
    \end{itemize}
\end{Definition}

Let $I$ be a subset of $\Gamma_0$ and let $a\in I^c$. By sphericality, the parabolic subgroup $W_{[I,a]}$ is finite and we define
\begin{equation}\label{eqn:redribgen}
    \mu_{a,I} \coloneqq \Delta^{-1}_{[I,a]-a}\Delta_{[I,a]} \in A^+,
\end{equation}
where $\Delta_K \in A^+$ denotes the positive lift of the longest element $w_K \in W_K$ (see Notation \ref{notation:longest_element}).
Note that we are taking the reduced expression in \eqref{eqn:redribgen}, so that it is an element in $A^+$.
Moreover, \eqref{eqn:redribgen} is the positive lift of \eqref{eqn:BHgen}, i.e.\ it is the image under the set-theoretic section $W \to A^+$ of the projection map $\pi:A\to W$. 

Note that by sphericality, \eqref{eqn:redribgen} reduces to the following
\begin{equation}\label{eqn:finiteredribgen}
    \mu_{a,I} = \Delta^{-1}_I\Delta_{I+a} \in A^+;
\end{equation}
c.f.\ \eqref{eqn:reducedwallcrossingformula} in Lemma \ref{lem:combinatoriallemmas}. One can readily check that 
\begin{equation}\label{eq:Rnugens}
\mu_{a,I} \in \Hom_{\sR'}(I,I+a-\iota_{[I,a]}(a)).
\end{equation}

\begin{Definition}[\protect{\cite{Godelle_RibbonII}}] \label{def:reduced_ribbon_groupoid}
    Let $\Gamma$ be a spherical Coxeter diagram, and let $J$ be a subset of the vertex set $\Gamma_0$. The \textbf{reduced ribbon groupoid} $\R \coloneqq \R(J)$ associated to $J$ is the subgroupoid of the ribbon groupoid $\R'$ whose objects coincide with the objects of $\R'$, and whose morphisms are generated by the morphisms $\mu_{a,I}$ for $I$ an associate of $J$ and $a \in I^c$.
\end{Definition}

The relevance of $\R$ for us is that its vertex group $\R_J$ is isomorphic to $N(A,J)$:
\begin{Theorem}[\protect{\cite[Proposition 4.4]{GodelleI}}] \label{thm:Artinnormaliser} Let $\Gamma$ be a spherical Coxeter diagram with associated Artin--Tits group $A$, and let $J$ be a subset of the vertex set $\Gamma_0$. Let $\sR$ be the reduced ribbon groupoid associated to $J$. 
    There is an isomorphism of groups
    \[
    \norm_A(A_J) \cong A_J \rtimes \R_J.
    \]
    In particular, the vertex group $\R_J$ is isomorphic to the normaliser quotient $N(A,J)$.
\end{Theorem}

\begin{Remark}\label{rem:Godpresent}
   In the spherical case, the monoid $A^+$ has a Garside structure. This implies that $A^+$ admits all greatest common divisors and least common multiples relative to both left and right divisibility. In particular, for any associate $I$ of $J$, and $a,b\in I^c$, we can consider the least common multiple (for left divisibility) $\mu_{a,I}\vee \mu_{b,I}$ in $A^+$. 

    By \cite[Proposition 4.19]{Godelle_RibbonII}, $\mu_{a,I} \vee \mu_{b,I}$ is a morphism in $\cR$ and it has precisely two expressions as a product of generating morphisms:
\begin{equation} \label{eqn:lcmcompletion}
        \mu_{a,I} \vee \mu_{b,I}=
        \begin{cases}
        \mu_{a_1,I_1}\mu_{a_2,I_2}\cdots \mu_{a_m,I_m},\\\mu_{b_1,L_1}\mu_{b_2,L_2}\cdots \mu_{b_n,L_n},
        \end{cases}
\end{equation}
where $a_1=a$, $b_1=b$ and $I_1=I=L_1$. Although we won't use it for our main results, we note that Godelle \cite[Corollary 4.18]{Godelle_RibbonII} proved that the relations obtained by equating the right hand side of \eqref{eqn:lcmcompletion} is a complete set of relations for $\cR$.  
\end{Remark}

Notice that the two expressions of the least common multiple $\mu_{a,I}\vee \mu_{b,I}$  in \eqref{eqn:lcmcompletion} are similar to the two standard expressions of the element $v_{a,b,I}$ in \eqref{eqn:lcmstdexp}. The following lemma gives a precise relationship.

\begin{Lemma}\label{lem:stdexp}
    Let $I$ be an associate of $J$ and let $a,b\in I^c$.
    The two expressions of the least common multiple $\mu_{a,I}\vee \mu_{b,I} \in A^+$ in \eqref{eqn:lcmcompletion} agree with the positive lifts of the two standard expressions of $v_{a,b,I}$ in \eqref{eqn:lcmstdexp} of Lemma \ref{lem:BHuniquelcm}.
    In particular, we have $m=n$ and $\pi(\mu_{a,I}\vee \mu_{b,I}) =v_{a,b,I}$.
    Moreover, $v_{a,b,I}$ is the join of $v_{a,I}$ and $v_{b,I}$ for the weak left Bruhat order on $W$.
\end{Lemma}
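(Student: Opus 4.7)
The plan is to pass between the Brink-Howlett setting in $W$ and the Garside setting in $A^+$ via the set-theoretic section $\sigma\colon W\to A^+$, $w\mapsto\sigma_w$. The key technical input is the identity $\mu_{a,I}=\sigma_{v_{a,I}}$: in finite type $\mu_{a,I}=\Delta_I^{-1}\Delta_{I+a}$ projects under $\pi$ to $w_I^{-1}w_{I+a}$, which equals the combinatorial expression $w_{[I,a]-a}w_{[I,a]}=v_{a,I}$ after decomposing $W_{I+a}$ as a direct product over the connected components of $\Gamma(I+a)$. Since $\Delta_I$ left-divides $\Delta_{I+a}$ in $A^+$, the Artin-monoid word length of $\mu_{a,I}$ is $\ell(w_{I+a})-\ell(w_I)=\ell(v_{a,I})$, which exhibits $\mu_{a,I}$ as the positive lift $\sigma_{v_{a,I}}$.

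Because $\sigma$ turns length-additive products in $W$ into products in $A^+$ (a direct consequence of Matsumoto's theorem), lifting the two standard expressions of $v_{a,b,I}$ from Lemma \ref{lem:BHuniquelcm}\eqref{item:stdexp} gives the identity
\[
\sigma_{v_{a,b,I}}=\mu_{a_1,I_1}\cdots\mu_{a_m,I_m}=\mu_{b_1,L_1}\cdots\mu_{b_n,L_n}
\]
in $A^+$. In particular $\mu_{a,I}$ and $\mu_{b,I}$ both left-divide $\sigma_{v_{a,b,I}}$, so $\mu_{a,I}\vee\mu_{b,I}$ left-divides $\sigma_{v_{a,b,I}}$ in $A^+$. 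Conversely, writing $\sigma_{v_{a,b,I}}=(\mu_{a,I}\vee\mu_{b,I})\cdot\delta$ in $A^+$, projecting via $\pi$, and comparing additivity of Artin-monoid length on $A^+$ with subadditivity of Coxeter length on $W$ forces the induced factorisation of $v_{a,b,I}$ in $W$ to be length-additive; hence $w:=\pi(\mu_{a,I}\vee\mu_{b,I})$ satisfies $w\leq_L v_{a,b,I}$ in weak left Bruhat order. Since $v_{a,I},v_{b,I}\leq_L w$, Lemma \ref{lem:BHuniquelcm}\eqref{item:lcm} forces $v_{a,b,I}\leq_L w$, hence $w=v_{a,b,I}$ and $\delta=e$, so $\mu_{a,I}\vee\mu_{b,I}=\sigma_{v_{a,b,I}}$.

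With that identification in hand, Godelle's two expressions in \eqref{eqn:lcmcompletion} for $\mu_{a,I}\vee\mu_{b,I}$ project under $\pi$ to standard (length-additive) expressions of $v_{a,b,I}$ beginning respectively with $v_{a,I}$ and $v_{b,I}$, and the uniqueness in Lemma \ref{lem:BHuniquelcm}\eqref{item:stdexp} matches them with \eqref{eqn:lcmstdexp}; in particular $m=n$ and $\pi(\mu_{a,I}\vee\mu_{b,I})=v_{a,b,I}$. The claim that $v_{a,b,I}$ is the join of $v_{a,I}$ and $v_{b,I}$ in weak left Bruhat order then follows from the standard order embedding from $(W,\leq_L)$ into $A^+$ ordered by left-divisibility, induced by $\sigma$, which sends joins to lcms wherever they exist. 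The main obstacle I anticipate is the length-comparison bookkeeping in the middle paragraph; once the identification $\mu_{a,I}=\sigma_{v_{a,I}}$ is rigorously established, the remaining steps are a clean back-and-forth between divisibility in $A^+$ and weak left Bruhat order in $W$.
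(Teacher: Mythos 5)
Your proof is correct and follows essentially the same route as the paper's: both rest on the identification $\mu_{a,I}=\sigma_{v_{a,I}}$, the poset isomorphism between positive lifts in $A^+$ under left-divisibility and $(W,\leq_L)$ (the paper cites this directly rather than re-deriving it via length bookkeeping), and a mutual-divisibility argument between $\mu_{a,I}\vee\mu_{b,I}$ and the positive lift of $v_{a,b,I}$ using Lemma \ref{lem:BHuniquelcm}. The only point to make explicit is that before invoking Lemma \ref{lem:BHuniquelcm}\eqref{item:lcm} you must know that $w=\pi(\mu_{a,I}\vee\mu_{b,I})$ is a morphism of $\BH$; this follows by projecting either expression in \eqref{eqn:lcmcompletion}, which you only bring in afterwards.
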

\begin{proof}
    Denote by $\wt{W}\subset A^+$ the set of positive lifts of elements of $W$ in $A^+$. Since $\Gamma$ is a spherical-type diagram, by \cite[Section IX.1.3]{ddgkm} the map $\pi: A \to W$ restricts to an isomorphism between the following posets:
    \begin{enumerate}
        \item $\wt{W}$: endowed with left-divisibility in the monoid $A^+$, and 
        \item $W$: endowed with left-divisibility from the weak left Bruhat order. 
    \end{enumerate}
    Moreover, the inverse isomorphism is the positive lift $W\to A^+$. 
    In particular, if $w=s_1s_2\cdots s_r$ is a length-additive decomposition of $w$ (i.e.\ a reduced expression), then the product $\sigma_{s_1}\sigma_{s_2}\cdots\sigma_{s_r}$ in $A^+$ is a decomposition of the positive lift $\sigma_w$ in $\wt{W}\subset A^+$ .

    The condition in Lemma \ref{lem:BHuniquelcm} is always satisfied since $\Gamma$ is of spherical-type by assumption. Now, note that each $\mu_{a,I}\in A^+$ is the positive lift of $v_{a,I}\in W$ by construction. In particular, $\mu_{a,I}\vee \mu_{b,I}$ lies in $\wt{W}$. 
    By applying the isomorphism of posets $\pi: \wt{W} \to W$ to its two expressions as in \eqref{eqn:lcmcompletion}, we obtain two standard expressions of $\pi(\mu_{a,I}\vee\mu_{b,I})$.
    Thus $\pi(\mu_{a,I}\vee\mu_{b,I})$ is left-divided by both $v_{a,I}$ and $v_{b,I}$, and it is moreover a morphism in $\BH$ (starting from $I$). By Lemma \ref{lem:BHuniquelcm}, $v_{a,b,I}$ left divides $\pi(\mu_{a,I}\vee \mu_{b,I})$.
    
    Conversely, since the two decompositions of $v_{a,b,I}$ in \eqref{eqn:lcmstdexp} are standard expressions, they both lift to decompositions of the positive lift of $v_{a,b,I}$ in $\wt{W} \subset A^+$. In particular, $\mu_{a,I}$ and $\mu_{b,I}$ both left-divide the positive lift of $v_{a,b,I}$ in $A^+$, and so does their lcm $\mu_{a,I}\vee \mu_{b,I}$.
    We then obtain that $\mu_{a,I}\vee \mu_{b,I}$ and the positive lift of $v_{a,b,I}$ left-divide one another in $A^+$, and thus they are equal. 

    The rest of the statement is a direct consequence of the isomorphism of posets $\pi:\wt{W}\to W$.
\end{proof}

\section{Normaliser quotients and fundamental groups}\label{sec:mainresult}
This section is devoted to the proof of Theorem \ref{thm:main1v2}, which will be an immediate consequence of Theorem \ref{thm:main} below. As this will require results of the last section, we will still assume sphericality.

Fix a spherical Coxeter diagram $\Gamma$ with associated Coxeter group $W$. We also fix a subset $J$ of the vertex set $\Gamma_0$. We otherwise keep the notation of the previous sections. 

To begin, we construct a functor $\cG: \D \to \R$ as follows:
\begin{itemize}
    \item For all object $(x,I)\in \Cham(J)$, we set $\cG(x,I)=I$.
    \item For a generating morphism $\omega_{a,I}:(x,I) \to (y,K)$, we define $\cG(\omega_{a,I})=\mu_{a,I} \in \Hom_\R(I,K)$.
\end{itemize}
We show below that this mapping does define a functor.

Next, recall we have an action of $N(W,J)$ on $\D$ which is free on the objects of $\D$ (see Remark \ref{rmk:NormactiononD}). Let $\cQ:\D \to \D/N(W,J)$ denote the quotient functor.
The essential feature of $\cG$ which will imply Theorem \ref{thm:main1v2}  is that it is a groupoid covering isomorphic to $\cQ$. We will build up to this in a series of results.

\begin{Lemma}\label{lem:Gwelldef}
    The functor $\cG:\D \to \R$ is well-defined.    
\end{Lemma}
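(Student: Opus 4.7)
The plan is to use the universal property of the groupoid completion: I would first show that $\cG$ is well-defined as a functor from the underlying category $\D^+$ to the groupoid $\R$, and then extend to $\D$. For this, I would verify two things: first, that the image of each generating morphism $\omega_{a,I}:(x,I)\to(y,K)$ under $\cG$ has the correct source and target in $\R$; and second, that the defining relations in $\D^+$ — identifications of geodesic paths sharing source and target — are respected.

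The first point is immediate from the definitions. By Proposition \ref{prop:simplewallcrossing} and \eqref{eqn:wallcrossingformula}, we have $K = I + a - \iota_{[I,a]}(a)$, so the element $\mu_{a,I} = \Delta_I^{-1}\Delta_{I+a}$ of \eqref{eqn:finiteredribgen} lies in $\Hom_\R(I,K)$ by construction.

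For the second point, consider two geodesic paths $p = \omega_{a_1,I_1}*\cdots*\omega_{a_{m-1},I_{m-1}}$ and $q = \omega_{b_1,L_1}*\cdots*\omega_{b_{n-1},L_{n-1}}$ sharing the source $(x,I)$ and target $(y,K)$. By Proposition \ref{prop:geodesicsandwordlength} the associated products $v_p = v_{a_1,I_1}\cdots v_{a_{m-1},I_{m-1}}$ and $v_q = v_{b_1,L_1}\cdots v_{b_{n-1},L_{n-1}}$ are standard expressions, and by Proposition \ref{prop:hominWCandBH}\eqref{item:WCtoBH} they are standard expressions of the same element $x^{-1}y \in \Hom_\BH(I,K)$. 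Invoking the full statement of \cite[Theorem A]{BH} (as recalled in the paragraph preceding Corollary \ref{cor:atomicM}), any two standard expressions of a morphism of $\BH$ are connected by a finite sequence of atomic Matsumoto relations of the form \eqref{eqn:lcmstdexp}. By Lemma \ref{lem:stdexp}, each such atomic relation is the image under $\pi$ of a defining relation of $\R$ of the form \eqref{eqn:lcmcompletion}. Hence $\cG(p)=\mu_{a_1,I_1}\cdots\mu_{a_{m-1},I_{m-1}}$ and $\cG(q)=\mu_{b_1,L_1}\cdots\mu_{b_{n-1},L_{n-1}}$ coincide in $\R$. This establishes that $\cG$ is a well-defined functor from $\D^+$ to $\R$, and the universal property of the groupoid completion extends it uniquely to $\D$.

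The main obstacle is ensuring that the geometric relations of $\D$ (generated by geodesic equivalence) align with the algebraic relations defining $\R$. This alignment is bridged precisely by the identification between geodesic paths and standard expressions in Proposition \ref{prop:geodesicsandwordlength} combined with Lemma \ref{lem:stdexp}, which lifts Brink--Howlett's atomic Matsumoto relations in $W$ to the positive-lift relations in $A^+$ that generate the relations of $\R$.
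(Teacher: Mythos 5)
Your overall plan is reasonable and your first point (that each generator $\omega_{a,I}$ is sent to a morphism of $\cR$ with the correct source and target) is fine, but on the second point you take a genuinely different route from the paper, and that route has a gap as written. The paper never invokes any Matsumoto-type statement: it simply observes that for a geodesic $p$ from $(x_1,I_1)$ to $(x_n,I_n)$, Proposition \ref{prop:geodesicsandwordlength} makes $v_{a_1,I_1}\cdots v_{a_{n-1},I_{n-1}}$ a length-additive expression of $x_1^{-1}x_n$, so the product of the positive lifts $\mu_{a_i,I_i}$ equals the positive lift of $x_1^{-1}x_n$ (by the multiplicativity of positive lifts on length-additive products, i.e.\ the poset isomorphism $\wt{W}\cong W$ used again in Lemma \ref{lem:stdexp}). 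Since this element of $A$ depends only on the endpoints of $p$, any two geodesics with the same source and target automatically have the same image, and one never has to know how two standard expressions are related to each other.

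The gap in your argument is in the step ``any two standard expressions of a morphism of $\BH$ are connected by a finite sequence of atomic Matsumoto relations of the form \eqref{eqn:lcmstdexp}.'' What you actually need is the strong word property: the two standard expressions are connected by replacing consecutive subwords using \emph{only} the rank-two relations, never leaving the set of positive words. What you cite --- that the relations of the groupoid $\BH$ are generated by the rank-two relations --- is a presentation statement, and equality of two words in a presented groupoid is witnessed by substitutions from the defining relations \emph{together with} free insertions and cancellations of $v_{a,I}\,v_{a,I}^{-1}=v_{a,I}\,v_{a',K}$. These trivial relations of $\BH$ do \emph{not} lift to relations of $\cR$: one computes $\mu_{a,I}\,\mu_{a',K}=\Delta_I^{-2}\Delta_{I+a}^{2}\neq 1$ in $A$ (for $J=\emptyset$ this is just $\sigma_a^2$), even though $v_{a,I}v_{a',K}=e$ in $W$. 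So if the connecting sequence of moves ever uses such an insertion, your transfer to $\cR$ breaks. The strong word property is in fact true in this finite-type setting --- it is essentially \cite[Theorem 1.8]{Ko25}, and can be derived from the lcm property of Lemma \ref{lem:BHuniquelcm}\eqref{item:lcm} by the usual induction on length --- so your proof is repairable by citing or proving that precise statement; but the appeal to Theorem A of \cite{BH} does not carry the weight you place on it, and the paper's direct computation of $\cG(p)$ is both shorter and sidesteps the issue entirely.
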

\begin{proof}
    Since $\D$ is the groupoid completion of $\D^+$ and $\cG$ maps into a groupoid, it is sufficient to prove that $\cG$ is well-defined on $\D^+$.
    In particular, we only have to show that any two geodesic paths in $\D^+$ with the same source and target are sent to the same morphism in $\R$.
    To do so, we will prove that any geodesic path in $\D^+$ starting at $(x_1,I_n)$ and ending at $(x_n,I_n)$ is sent to the positive lift of $x_1^{-1}x_n \in W$.
    This is sufficient since this description depends only on the source and target of the path.
    
    So suppose $p$ is a geodesic path in $\D^+$ traveling through the following sequence of chambers
    \[
    p: (x_1,I_1) \to (x_2,I_2) \to \cdots \to (x_n,I_n)
    \]
    with $(x_{i+1}, I_{i+1}) = \omega_{a_i}(x_i,I_i)$ for all $1 \leq i \leq n-1$.
    Under $\cG$, $p$ is sent to the element
    \[
    \cG(p) = \mu_{a_1,I_1}\mu_{a_2,I_2}\ldots \mu_{a_{n-1},I_{n-1}} \in \Hom_{\R}(I_1,I_n).
    \]
    Recall that each $\mu_{a_i,I_i}$ is, by definition, the positive lift of $v_{a_i,I_i}$. 
    Since $p$ is geodesic, by Proposition \ref{prop:geodesicsandwordlength} we have that $v_{a_1,I_1}v_{a_2,I_2}\cdots v_{a_{n-1},I_{n-1}}$ is a standard expression for $x_1^{-1}x_n$.
    Consequently, $\cG(p)$ is indeed the positive lift of $x_1^{-1}x_n$, as required.
\end{proof}

\begin{Lemma}\label{lem:Gcover}
    The functor $\cG:\D \to \R$ is a groupoid covering.
\end{Lemma}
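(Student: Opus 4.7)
The plan is to verify the unique morphism-lifting property defining a groupoid covering, exploiting the explicit presentations of both $\R$ and $\D$. Recall that $\R$ is generated by the $\mu_{a,I}$ with a complete set of defining relations given by identifying the two expressions of $\mu_{a,I}\vee\mu_{b,I}$ in \eqref{eqn:lcmcompletion}, while $\D$ is built from sequences of simple wall crossings modulo the identification of geodesic paths with a common source and target.

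\emph{Existence.} Given $\alpha\in\Hom_{\R}(I,K)$ and $(x,I)\in\cG^{-1}(I)$, I would write $\alpha$ as a word $\mu_{a_1,I_1}^{\eps_1}\cdots\mu_{a_n,I_n}^{\eps_n}$ in the generators (with $\eps_i\in\{\pm 1\}$), and lift it to the corresponding sequence of simple wall crossings $\omega_{a_1,I_1}^{\eps_1}\cdots\omega_{a_n,I_n}^{\eps_n}$ in $\D$ starting at $(x,I)$. This produces a morphism $\tilde\alpha\in\D$ with $\cG(\tilde\alpha)=\alpha$, whose target $(y,K)$ is the endpoint of the sequence.

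\emph{Well-definedness of the lift (the main step).} Different word expressions for $\alpha$ are related by the defining relations \eqref{eqn:lcmcompletion}, so it suffices to show that for any $(x,I)\in\cG^{-1}(I)$ and any $a,b\in\Gamma_0\setminus I$, the two lifted sequences of wall crossings starting at $(x,I)$ define the same morphism in $\D$. By Lemma \ref{lem:stdexp}, the two sides of \eqref{eqn:lcmcompletion} project under $\pi$ to the two standard expressions of $v_{a,b,I}$ in \eqref{eqn:lcmstdexp}. Proposition \ref{prop:geodesicsandwordlength} then implies that each lift, starting at $(x,I)$, is a geodesic path in the hyperplane arrangement, and Corollary \ref{cor:atomicM} identifies their common target as $(xv_{a,b,I},I_n)$. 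Since $\D$ identifies any two geodesic paths sharing source and target, the two lifts coincide as morphisms in $\D$.

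\emph{Uniqueness.} Any $\tilde\alpha\in\Hom_{\D}((x,I),(y,K))$ with $\cG(\tilde\alpha)=\alpha$ can itself be represented by a sequence of wall crossings starting at $(x,I)$; applying $\cG$ returns a word for $\alpha$, and the existence construction applied to this word recovers $\tilde\alpha$. Combined with well-definedness, every lift of $\alpha$ at $(x,I)$ equals the morphism built in the first step.

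The main obstacle is the well-definedness step: the algebraic lcm relation in $\R$ must lift to a geometric equality of paths in $\D$. This is precisely where Lemma \ref{lem:stdexp} (matching the $\R$-relations with $\BH$-standard expressions) and Corollary \ref{cor:atomicM} (the atomic Matsumoto property supplying the common endpoint) combine to close the argument; once these are in hand the remainder of the verification is formal.
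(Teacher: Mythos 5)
Your proposal is correct and follows essentially the same route as the paper: both reduce unique path-lifting to checking that the defining lcm relations \eqref{eqn:lcmcompletion} of $\R$ lift, via Lemma \ref{lem:stdexp} and Proposition \ref{prop:geodesicsandwordlength}, to a pair of geodesic paths in $\D^+$ with common source and target, which are then identified by the defining relations of $\D^+$. The only cosmetic difference is that you invoke Corollary \ref{cor:atomicM} to pin down the common endpoint, where the paper simply reads it off from the equality of the two products in $W$.
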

\begin{proof}
    The existence of path lifting is immediate.
    Indeed, any generating path $$\mu_{a,I} \in \Hom_{\R}(I,I+a-\iota_{[I,a]}(a))$$ can be lifted to a one-step path $\omega_{a,I}:(x,I) \to \omega_a(x,I)$ in $\D^+$ for any choice of $(x,I) \in \Cham(J)$.

    To prove uniqueness, it suffices to show that the two sides of any generating relation in \eqref{eqn:lcmcompletion} lift 
    to the same path in $\D^+$ once we fix a starting point.

    Suppose we have $\mu_{a,I}$ and $\mu_{b,I}$ as in \eqref{eqn:lcmcompletion}, and let $(x,I) \in \Cham(J)$. Then we can lift the two products in \eqref{eqn:lcmcompletion} to paths in $\D^+$ starting at $(x,I)$. By Proposition \ref{prop:geodesicsandwordlength}, since these products both descend to standard expressions of an element $w \in W$ (Lemma \ref{lem:stdexp}), both of these paths are geodesic paths. Moreover, these paths have the same endpoint, given by $(xw,K)$ for some $K \in \R$. By the defining relations in $\D^+$, the paths are equal. This proves uniqueness of path-lifting. 
\end{proof}

\begin{Proposition}\label{prop:G'equiv}
    The functor $\cG$ factors through $\cQ$, resulting in a functor $$\cG':\D/N(W,J) \to \R.$$
    Moreover, $\cG'$ is an isomorphism of groupoids.
\end{Proposition}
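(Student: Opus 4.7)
The plan is to first verify that $\cG$ is constant on $N(W,J)$-orbits so that it descends through $\cQ$, and then to establish that the resulting $\cG'$ is essentially surjective and fully faithful, using the covering property of $\cG$ from Lemma \ref{lem:Gcover}.

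For the factorisation, Lemma \ref{lem:normaliseraction}(1) shows that the $N(W,J)$-action preserves the second label of a chamber, so $\cG(g \cdot (x,I)) = I = \cG(x,I)$ on objects. On the generating simple wall crossings, Lemma \ref{lem:normaliseraction}(4) says $g$ sends $\omega_{a,I}:(x,I) \to \omega_a(x,I)$ to $\omega_{a,I}:g\cdot(x,I) \to g\cdot \omega_a(x,I)$, both of which $\cG$ maps to $\mu_{a,I}$. Since $\D^+$ is generated by such wall crossings and $\D$ is its groupoid completion, $\cG$ is $N(W,J)$-invariant and thus descends through $\cQ$ to a functor $\cG'$. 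Essential surjectivity is then immediate from Theorem \ref{thm:labellingtheorem}: given any associate $I$ of $J$, there exists $x \in W$ with $W_J x = x W_I$ of minimal length in its coset, so $(x,I) \in \Cham(J)$ and $\cG'([(x,I)]) = I$.

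For full faithfulness I would use the fact (already implicit in the proof of Lemma \ref{lem:normaliseraction}(3)) that the $N(W,J)$-action on $\Cham(J)$ is in fact \emph{free}: if $g \cdot (x,I) = (x,I)$ then $g \in W_J$. Consequently every $[\alpha] \in \Hom_{\D/N(W,J)}([(x,I)], [(y,K)])$ has a unique representative $\alpha$ in $\D$ with source $(x,I)$, whose target lies in the $N(W,J)$-orbit of $(y,K)$. Faithfulness then follows from the uniqueness of path lifting under $\cG$: two such representatives with equal image in $\R$ must coincide in $\D$. For fullness, path lifting along $\cG$ gives any $\gamma \in \Hom_\R(I,K)$ a preimage $\tilde\gamma:(x,I) \to (z,K)$ in $\D$, and $(z,K)$ lies in the $N(W,J)$-orbit of $(y,K)$ by the transitivity in Lemma \ref{lem:normaliseraction}(2). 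The main subtlety I anticipate is reconciling the two covering-like structures at play (the groupoid quotient $\cQ$ and the covering $\cG$); the key point is that freeness, rather than mere faithfulness, of the $N(W,J)$-action on objects of $\D$ allows morphisms in the quotient to be identified with disjoint unions of hom-sets in $\D$, after which the covering property of $\cG$ transports directly to fullness and faithfulness of $\cG'$.
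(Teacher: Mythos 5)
Your proposal is correct and follows essentially the same route as the paper: factorisation via $N(W,J)$-equivariance of $\cG$ on objects and generating wall crossings, essential surjectivity from the existence of a chamber $(x,I)$ for each associate $I$, fullness from existence of path lifting along the covering $\cG$, and faithfulness from uniqueness of path lifting after choosing representatives with a common source (the paper invokes transitivity on first labels for this; your appeal to freeness of the action on objects is the same point, already built into the quotient-groupoid construction).
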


\begin{proof}
Set $N\coloneqq N(W,J)$ for simplicity. 
    Endow $\R$ with the trivial $N$-action. Then showing that $\cG$ factors is equivalent to show that $\cG$ is $N$-equivariant. 
On objects, this is clear since $N$ acts only on the first factor of chamber labels $(x,I) \in \Cham(J)$, and $\cG:(x,I) \mapsto I$.
On morphisms, equivariance is also immediate since both $\omega_{a,I}:(x,I) \to (y,K)$ and its image under $g\in N$, i.e. $\omega_{a,I}:g\cdot(x,I) \to g\cdot(y,K)$, are mapped by $\cG$ to $\mu_{a,I}$. Since these morphisms generate $\sD$ this implies that $\cG$ is $N$-equivariant.

Next we show that the induced functor $\cG':\D/N \to \R$ is an isomorphism. Since both $\sD/N$ and $\R$ are small groupoids, it is sufficient to show that $\cG'$ is fully faithful and bijective on objects. 

The fact that $\cG'$ is bijective on objects is straightforward: by Lemma \ref{lem:normaliseraction}, the map $(x,I)\mapsto I$ induces a bijection between $\Cham(J)/N$ and the associates of $J$, and this is exactly the map of $\cG'$ on objects by definition.

Fullness follows from the equality $\cG'\circ \cQ = \cG$ and path-lifting. Indeed, given a morphism $\mu:I \to K$ in $\R$, there exists a lift $\tilde{\mu}:(x,I) \to (y,K)$, since $\cG$ is a cover. Set $\mu'\coloneq \cQ(\tilde{\mu})$. Then $\cG(\mu')=\mu$. 

Finally, we show that $\cG'$ is faithful. Let $[(x,I)], [(y,K)] \in \D/N$, and consider $$[p],[p'] \in \Hom_{\D/N}([(x,I)], [(y,K)]),$$
such that $\cG'([p])=\cG'([p'])$.
Since $N$ acts transitively on first labels of elements in $\Cham(J)$ (cf.\ Lemma \ref{lem:normaliseraction}(2)), we can choose representatives $p,p'$ such that 
\begin{align*}
    p:(x,I) &\to g\cdot (y,K),\\
    p':(x,I) &\to g'\cdot (y,K),
\end{align*}
where $g,g' \in N$.
Then we have that $\cG(p)=\cG(p')$. But now $p$ and $p'$ are both lifts of $\cG(p)=\cG(p')$ with the same source. By uniqueness of path lifting, we have $p=p'$, and hence $[p]=[p']$.
\end{proof}

\begin{Corollary}\label{cor:GandQequiv}
    The covers $\cG$ and $\cQ$ are isomorphic. In particular, $\cG$ is a normal covering and its deck transformation group of is isomorphic to $N(W,J)$.
\end{Corollary}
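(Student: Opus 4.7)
The plan is to deduce both assertions directly from the factorisation $\cG = \cG'\circ \cQ$ established in Proposition \ref{prop:G'equiv}, together with the general theory of groupoid coverings recalled in Section 2. Since $\cQ$ is a normal covering with $\Deck(\cQ)\cong N(W,J)$ and $\cG'$ is an equivalence of groupoids, post-composing $\cQ$ with the equivalence $\cG'$ on the base produces a cover $\cG$ that is, by definition, equivalent to $\cQ$. This immediately yields the first assertion.

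To identify $\Deck(\cG)$ concretely, I would first produce the inclusion $N(W,J)\hookrightarrow \Deck(\cG)$. The proof of Proposition \ref{prop:G'equiv} already shows that $\cG$ is $N(W,J)$-equivariant with trivial action on $\sR$, so each $g\in N(W,J)$ gives an automorphism $\varphi_g$ of $\sD$ satisfying $\cG\circ \varphi_g = \cG$, i.e.\ a deck transformation. The assignment $g\mapsto \varphi_g$ is injective by the faithfulness of the $N(W,J)$-action on $\Cham(J)$ (Lemma \ref{lem:normaliseraction}(3)).

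For the reverse inclusion, I would fix $\varphi\in\Deck(\cG)$ and any object $(x,I)\in\sD$. Since $\cG\circ\varphi=\cG$, the second label is preserved: $\varphi(x,I)=(x',I)$ for some $x'$. By the transitivity part of Lemma \ref{lem:normaliseraction}(2), there exists $g\in N(W,J)$ with $g\cdot(x,I)=(x',I)$, so $\varphi$ and $\varphi_g$ agree on $(x,I)$. Uniqueness of path-lifting for the cover $\cG$, together with connectedness of $\sD$ (guaranteed by Lemma \ref{lem:combinatoriallemmas}(3)), then forces $\varphi=\varphi_g$ on all of $\sD$. Hence $\Deck(\cG)=N(W,J)$, and the transitivity of this action on each fiber of $\cG$ is precisely the statement that $\cG$ is a normal cover.

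The argument is essentially bookkeeping, so no step presents a real obstacle; the only point requiring a bit of care is the rigidity step --- that a deck transformation of $\cG$ is determined by its value at a single object --- which is the standard rigidity property of covers of connected groupoids, applied using that morphisms in $\sD$ with a prescribed source are determined by their image under $\cG$.
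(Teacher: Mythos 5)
Your proof is correct, and the first assertion is handled exactly as in the paper: both arguments read the equivalence of the covers $\cG$ and $\cQ$ off the commutative triangle $\cG = \cG'\circ\cQ$ from Proposition \ref{prop:G'equiv}. Where you diverge is in the second assertion. The paper simply transports normality and the deck group along the equivalence: since $\cQ$ is a normal cover with $\Deck(\cQ)\cong N(W,J)$, ``the same is true for $\cG$.'' You instead compute $\Deck(\cG)$ from scratch: the $N(W,J)$-equivariance of $\cG$ gives $N(W,J)\hookrightarrow\Deck(\cG)$ (injective by Lemma \ref{lem:normaliseraction}(3)), and the reverse inclusion follows from preservation of second labels, transitivity on first labels (Lemma \ref{lem:normaliseraction}(2)), and the standard rigidity of deck transformations of a connected cover via unique path-lifting. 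Your route is longer but slightly more self-contained: because the paper's notion of deck transformation requires \emph{equality} (not isomorphism) of functors, transporting $\Deck$ along an equivalence of covers implicitly uses that $\cG'$ is injective on objects (which it is here, since orbits in $\D/N(W,J)$ are determined by their second labels), whereas your direct computation sidesteps that point entirely. It also closely parallels the proof of Theorem \ref{thm:WCuniversalcover}, so it fits naturally with the rest of the paper. Either argument is acceptable.
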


\begin{proof}
By Proposition \ref{prop:G'equiv} we have a commutative diagram
\[
\begin{tikzcd}
    & \D \ar[dl, "\cQ"'] \ar[dr, "\cG"] & \\
  \D/N(W,J) \ar[rr, "\cG'"]  &  & \R
\end{tikzcd}
\]
with $\cG'$ an isomorphism. Hence the two covers are isomorphic. Since $\cQ$ is normal and its group of deck transformations is isomorphic to $N(W,J)$, the same is true for $\cG$.
\end{proof}

\begin{Remark}\label{rem:construction_diagram_global}
We can now complete the square in \eqref{eq:global_diagram}.
The top horizontal arrow is described in Remark \ref{rem:obvious_functor}. The left vertical arrow is the functor $\cG$. The right vertical arrow is the universal cover of the Brink--Howlett groupoid obtained in Theorem \ref{thm:WCuniversalcover}. The bottom horizontal arrow is the functor mapping the generator $\mu_{a,I}$ of $\sR$ to the generator $v_{a,I}$ of $\BH$ (this is a well-defined functor by Godelle's presentation of $\R$, cf.\ Remark \ref{rem:Godpresent}).
    To show that the square is commutative it is sufficient to show commutativity on a set of generating morphisms of $\sD$, namely the simple wall crossings. Let $\omega_{a,I}$ be a simple wall crossing between two chambers $(x,I)$ and $(y,K)$. By definition, we have $\cQ(\omega_{a,I})=\mu_{a,I}$, and the bottom horizontal arrow of the diagram maps $\mu_{a,I}$ to $v_{a,I}$. Then, the top horizontal arrow maps $\omega_{a,I}$ to the unique morphism in $\widetilde{\BH}$ from $(x,I)$ to $(y,K)$. The universal covering $\widetilde{\BH}\to \BH$ maps this unique morphism to $x^{-1}y\in \hom_{\BH}(I,K)$. Since $(x,I)$ and $(y,K)$ are related by a simple wall crossing $\omega_{a,I}$, we have $x^{-1}y=v_{a,I}$.
\end{Remark}


Recall from Section \ref{sec:Dgroupoid} we have an equivalence of groupoids $\D \cong \pi_1(\cX_J^\CC)$. We need that the equivalence can be made equivariant with respect to the action of $N(W,J)$ (see discussion before \eqref{eq:TitsconeSES} for the action of $N(W,J)$ on $\cX_J^\CC$). This follows from the same argument one uses for Coxeter arrangements ($J=\emptyset)$. For the sake of completeness, we include an argument in the following lemma using results from Van der Lek's thesis \cite{VdL_thesis}. 
\begin{Lemma}[An $N(W,J)$-equivariant equivalence]\label{lem:equiv_quotient_pi1}
    There is an $N(W,J)$-equivariant equivalence $p:\D\to \pi_1(\cX_J^\CC)$, which induces the following commutative square
\[\begin{tikzcd}
	\D & {\pi_1(\cX_J^\CC)} \\
	{\D/N(W,J)} & {\pi_1(\cX_J^\CC/N(W,J))}
	\arrow["p", from=1-1, to=1-2]
	\arrow["\cQ"', from=1-1, to=2-1]
	\arrow[from=1-2, to=2-2]
	\arrow["{\overline{p}}"', from=2-1, to=2-2]
\end{tikzcd}\]
where the right vertical map is induced by the covering $\cX_J^\CC\to \cX_J^\CC/N(W,J)$, and the functor $\overline{p}$ is an equivalence of groupoids.
\end{Lemma}
\begin{proof}
We only have to construct an $N(W,J)$-equivariant equivalence $p:\D\to \pi_1(\cX_J^\CC)$.
Indeed, the normal covering $\cX_J^\CC\to \cX_J^\CC/N(W,J)$ gives a groupoid covering $\pi_1(\cX_J^\CC)\to \pi_1(\cX_J^\CC/N(W,J))$ which agrees with the groupoid covering $\pi_1(\cX_J^\CC)\to \pi_1(\cX_J^\CC)/N(W,J)$ onto the quotient category; see Section \ref{sec:groupoid_coverings}.
The $N(W,J)$-equivariance then guarantees that we have a well-defined equivalence $\overline{p}: \D/N(W,J) \to \pi_1(\cX_J^\CC)/N(W,J) = \pi_1(\cX_J^\CC/N(W,J))$.

We now provide the construction of such an equivalence.
For each chamber $C$ in $(\Cone(J)^\circ,\cH^J)$, define
\[X_C:=\{x+iy\in \cX_J^\CC~|~ \forall H\in \cH^J, x\in H\Rightarrow y\in D_H(C)\},\]
where $D_H(C)$ denotes the half space delimited by $H$ in which $C$ is included. The set $X_C$ is open in $\cX_J^\CC$ and is contractible by \cite[Corollary I.2.18]{VdL_thesis}. Let $C,C'$ be two adjacent chambers separated by a hyperplane $H_0$. By \cite[Corollary I.2.19]{VdL_thesis}, the intersection $X_C\cap X_{C'}$ admits two connected components, given by
\begin{align*}
    X_{C,C'}\coloneq\{x+iy\in \cX_J^\CC~|~x\in D_{H_0}(C)\text{~and~} \forall H\in \cH^J,x\in H\Rightarrow y\in D_H(C)=D_H(C')\},
\end{align*}
and $X_{C',C}$, which is defined by swapping the roles of $C$ and $C'$ above. 
Define a functor $p:\D\to \pi_1(\cX_J^\CC)$ as follows. Firstly, choose for each chamber $C$ a point $a_C \in X_C$ in such a way that $a_{g\cdot C}=g\cdot a_C$. This defines $p$ on objects, sending $C \mapsto a_C$. 
For adjacent chambers $C$ and $C'$, the image of the one-step path $C\to C'$ in $\D$ under $p$ is defined as follow.
Choose a point $d\in X_{C,C'}$ and let $\gamma_1$ (resp.\ $\gamma_2$) be a path in $X_{C}$ (resp.\ in $X_{C'}$) from $a_C$ to $d$ (resp.\ from $d$ to $a_{C'}$).
The concatenation $\gamma_1\gamma_2$ is a path from $a_C$ to $a_{C'}$ in $\cX_J^\CC$, whose homotopy class depends only on the connected component $X_{C,C'}$ (and does not depend on the choice of $\gamma_1,\gamma_2,d$); see discussion after \cite[Definition I.1.3]{VdL_thesis}.
The image of the one-step path $C\to C'$ in $\D$ under $p$ is defined as the homotopy class of $\gamma_1\gamma_2$.\footnote{Note that by definition, $p$ sends the one-step path $C'\to C$ in $\D$ to a path from $a_{C'}$ to $a_C$ that passes through a point in $X_{C',C}$ instead, which is \textit{not} homotopy equivalent to the opposite path of $\gamma_1\gamma_2$.} 
By \cite[Theorem I.4.10]{VdL_thesis}, $p$ is well-defined and is an equivalence from $\D$ to $\pi_1(\cX_J^\CC)$. 
Since $g\cdot X_{C,C'}=X_{g\cdot C,g\cdot C'}$ for all $g \in N(W,J)$, it follows immediately from the construction that $p$ is $N(W,J)$-equivariant.
\end{proof}

We can now prove the main results of this section: Theorem \ref{thm:main} and Corollary \ref{cor:Kpi1}.
The first is an isomorphism between the short exact sequences appearing in  Lemma \ref{lem:PAW} and \eqref{eq:TitsconeSES}. Together with Corollary \ref{cor:Kpi1}, they provide the proof of Theorem \ref{thm:main1v2}.

\begin{Theorem}\label{thm:main}
Let $\Gamma$ be a spherical Coxeter diagram with associated Coxeter group $W$ and associated Artin--Tits group $A$. Let also $J$ be a subset of the vertex set $\Gamma_0$. Fix $x\in \cX_J^\CC$ and denote also by $x$ its image in $\cX_J^\CC/N(W,J)$. 

We have an isomorphism of short exact sequences
\begin{equation}\label{eqn:isomorphism_sec}\begin{tikzcd}
	1 & {N(P,J)} & {N(A,J)} & {N(W,J)} & 1 \\
	1 & {\pi_1(\cX_J^\CC,x)} & {\pi_1(\cX_J^\CC/N(W,J),x)} & {N(W,J)} & 1
	\arrow[from=1-1, to=1-2]
	\arrow[from=1-2, to=1-3]
	\arrow["\simeq", from=1-2, to=2-2]
	\arrow[from=1-3, to=1-4]
	\arrow["{\simeq }", from=1-3, to=2-3]
	\arrow[from=1-4, to=1-5]
	\arrow[equals, from=1-4, to=2-4]
	\arrow[from=2-1, to=2-2]
	\arrow[from=2-2, to=2-3]
	\arrow[from=2-3, to=2-4]
	\arrow[from=2-4, to=2-5]
\end{tikzcd},
\end{equation}
where the first short exact sequence is induced by the canonical quotient $\pi:A \to W$ (see Lemma \ref{lem:PAW}) and the second is induced by the normal covering $\cX_J^\CC \to \cX_J^\CC/N(W,J)$.
\end{Theorem}
\begin{proof}
    By Lemma \ref{lem:equiv_quotient_pi1}, $p: \D \to \pi_1(\cX_J^\CC)$ and $\overline{p}: \D/N(W,J) \to \pi_1(\cX_J^\CC/N(W,J))$ provide an equivalence of groupoid coverings between $\cQ: \D \to \D/N(W,J)$ and $\pi_1(\cX_J^\CC) \to \pi_1(\cX_J^\CC/N(W,J))$, which gives us the following isomorphism of short exact sequences:
    \begin{equation}\label{eq:isomses1}\begin{tikzcd}[ampersand replacement=\&]
	1 \& {\D_J} \& {(\D/N(W,J))_J} \& {N(W,J)} \& 1 \\
    1 \& {\pi_1(\cX_J^\CC,x)} \& {\pi_1(\cX_J^\CC/N(W,J),x)} \& {N(W,J)} \& 1
	\arrow[from=1-1, to=1-2]
	\arrow["\cQ", from=1-2, to=1-3]
	\arrow["p", from=1-2, to=2-2]
	\arrow[from=1-3, to=1-4]
	\arrow["\overline{p}", from=1-3, to=2-3]
	\arrow[from=1-4, to=1-5]
	\arrow[equals, from=1-4, to=2-4]
	\arrow[from=2-1, to=2-2]
	\arrow[from=2-2, to=2-3]
	\arrow[from=2-3, to=2-4]
	\arrow[from=2-4, to=2-5]
    \end{tikzcd}\end{equation}
    where $x$ denotes the image of $(e,J)$ under $p$, and $\D_J$ signifies the vertex group based at $(e,J)$ for notation simplicity. 
    
    By Corollary \ref{cor:GandQequiv}, $\cQ$ is isomorphic to the groupoid covering $\cG: \D \to \R$ with $N(W,J)$ as its deck transformation group, so the top short exact sequence in \eqref{eq:isomses1} is isomorphic to 
    \begin{align}\label{eq:ses7}
        1 \to \D_J \xrightarrow{\cG} \R_J \xrightarrow{\psi} N(W,J) \to 1.
    \end{align}
    Note that each $\mu \in \R_J$ lifts to a morphism in $\D_J$ with source $(e,J)$ and target $(w,J)$, where $w=\pi(\mu)$. Hence $\psi(\mu)=\ol{w}$ by Lemma \ref{lem:normaliseraction} (see \eqref{eq:sesfornormalcover}).
    By Theorem \ref{thm:Artinnormaliser}, the canonical quotient $q:\mathrm{Norm}_A(A_J) \twoheadrightarrow N(A,J)$ restricts to an isomorphism from $\R_J\subseteq \mathrm{Norm}_A(A_J)$ to $N(A,J)$, so \eqref{eq:ses7} can be rewritten as
    \begin{align}\label{eq:ses8}
1 \to \D_J \to N(A,J) \xrightarrow{\phi} N(W,J) \to 1.
    \end{align}
    where $\phi \circ q|_{\R_J}=\psi$.
    It follows that $\phi$ agrees with $\ol{\pi}$ in Lemma \ref{lem:PAW}, and we obtain from it that \eqref{eq:ses8} is isomorphic to the top short exact sequence in \eqref{eqn:isomorphism_sec}. Composing the chain of isomorphisms we obtained throughout the proof finally yields the desired isomorphism in \eqref{eqn:isomorphism_sec}.
\end{proof}

\begin{Remark}
    With the notation of Theorem \ref{thm:main}, the second vertical map in \eqref{eqn:isomorphism_sec} is obtained by composing the following sequence of equivalences
\begin{equation}\label{eqn:sequence_equivalences}
\begin{tikzcd}
	{N(A,J)} & \sR & {\D/N(W,J)} & {\pi_1(\cX_J^\CC/N(W,J))} & {\pi_1(\cX_J^\CC/N(W,J),x).}
	\arrow[from=1-1, to=1-2]
	\arrow["{\cG^{'{-1}}}", from=1-2, to=1-3]
	\arrow["{\overline{p}}", from=1-3, to=1-4]
	\arrow[from=1-4, to=1-5]
\end{tikzcd}
\end{equation}
where the first (resp.\ second, third) equivalence comes from Theorem \ref{thm:Artinnormaliser} (resp.\ Proposition \ref{prop:G'equiv}, Lemma \ref{lem:equiv_quotient_pi1}). The final equivalence is just taking the vertex group of a groupoid.
\end{Remark}

\begin{Corollary}\label{cor:Kpi1}
    The space $\cX_J^\CC/N(W,J)$ is a $K(\pi,1)$ for $N(A,J)$.
\end{Corollary}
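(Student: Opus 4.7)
The plan is to reduce the asphericity of $\cX_J^\CC/N(W,J)$ to the asphericity of its universal-type cover $\cX_J^\CC$ by invoking the normal covering structure, and then to deduce the asphericity of $\cX_J^\CC$ from Deligne's classical $K(\pi,1)$ theorem for simplicial arrangements.

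First I would record the homotopy-theoretic consequence of the covering $\cX_J^\CC \to \cX_J^\CC/N(W,J)$. Since $W$ is finite, we have $\Cone = \Theta$ and $\Cone(J)^\circ = \Theta_J$, so $(\Theta_J,\cH_J)$ is a finite central arrangement. By the discussion in Section \ref{sec:normaliser_action}, $N(W,J)$ acts freely and properly discontinuously on $\cX_J^\CC$, making $\cX_J^\CC \to \cX_J^\CC/N(W,J)$ a normal covering map. In particular, for every $n \geq 2$ the induced map $\pi_n(\cX_J^\CC, x) \to \pi_n(\cX_J^\CC/N(W,J), x)$ is an isomorphism. Combined with the fundamental group identification $\pi_1(\cX_J^\CC/N(W,J), x) \cong N(A,J)$ of Theorem \ref{thm:main}, it therefore suffices to establish that $\pi_n(\cX_J^\CC, x) = 0$ for $n \geq 2$, i.e.\ that $\cX_J^\CC$ itself is a $K(\pi,1)$ space.

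For the asphericity of $\cX_J^\CC$, I would argue that $(\Theta_J, \cH_J)$ is a finite simplicial hyperplane arrangement, so that Deligne's theorem \cite{Del72} applies. Finiteness is immediate since $\Phi$ is finite. For simpliciality, I would verify that each chamber is a simplicial cone in $\Theta_J$ of the appropriate dimension. Concretely, picking dual coordinates $\{\alpha_i^*\}_{i \in \Gamma_0}$ on $\Theta$ to the basis $\{\alpha_i\}$ of $V$, one has $\Theta_L = \bigoplus_{i \notin L} \RR\alpha_i^*$, and the linear functionals $\alpha_j$ for $j \notin L$ restrict to the dual basis of $\Theta_L$. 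Thus when $|L| = |J|$, the cone $C_L$ is a simplicial open cone in $\Theta_L$; translating by any $x \in W$ with $x C_L \subseteq \Theta_J$ preserves simpliciality, so every chamber $x \cdot C_L \in \Cham(J)$ is a simplicial cone in $\Theta_J$ cut out by walls in $\cH_J$. This is precisely the arrangement (up to the duality $q : \Theta \to \Theta/\Theta_J$ recalled in the remark after \eqref{eqn:Titsconedecomposition}) treated by Deligne in \cite[Proposition 1.5]{Del72}, so his $K(\pi,1)$ theorem gives $\cX_J^\CC \simeq K(\pi_1(\cX_J^\CC), 1)$.

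Putting the two steps together: $\cX_J^\CC$ is aspherical by Deligne, the covering $\cX_J^\CC \to \cX_J^\CC/N(W,J)$ transports this asphericity to the quotient, and Theorem \ref{thm:main} identifies the remaining fundamental group as $N(A,J)$, yielding that $\cX_J^\CC/N(W,J)$ is a $K(N(A,J),1)$. The main conceptual content is external, namely Deligne's theorem; the only step requiring verification is simpliciality of $(\Theta_J, \cH_J)$, which I expect to be the mildly delicate point but is essentially a bookkeeping exercise with the stratification \eqref{eqn:Titsconedecomposition}.
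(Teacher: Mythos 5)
Your proof is correct and follows essentially the same route as the paper: asphericity of $\cX_J^\CC$ via Deligne's theorem for finite simplicial arrangements, transported along the normal covering, with Theorem \ref{thm:main} supplying the fundamental group identification; you merely spell out the simpliciality of the chambers, which the paper asserts without detail. (One inessential slip: to realise $(\Theta_J,\cH_J)$ via the duality of the remark one should quotient by $\Theta_{J^c}$, not $\Theta_J$, but this plays no role since Deligne's theorem applies to the arrangement directly.)
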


\begin{proof}
Note that $\cX_J^\CC$ is a complexified hyperplane complement from a real, central (since the hyperplanes are linear) finite hyperplane arrangement in a vector space (recall that we are in spherical-type).
Moreover, the chambers of the real arrangement are open simplicial cones. 
Therefore Deligne's Theorem \cite{Del72} applies, and $\cX_J^\CC$ is a $K(\pi,1)$. 
Since $\cX_J^\CC \to \cX_J^\CC/N(W,J)$ is a normal cover (see paragraph before \eqref{eq:TitsconeSES}) and  $\pi_1(\cX_J^\CC/N(W,J),x) \cong N(A,J)$ by Theorem \ref{thm:main}, the result follows.
\end{proof}

\begin{Remark} \label{rmk:finiteCWcomplex}
    Recall from Remark \ref{rmk:salvetticomplex} that the complexified complement $\cX_J^\CC$ has a finite CW-model given by the Salvetti complex. By the above corollary, this is also a CW-model for a classifying space of $N(P,J)$. Moreover, as in the proof of \cite[Theorem 3.1]{Paris_Kpi1}, one can check that this CW complex is compatible with the action of $N(W,J)$, so we can obtain from it a finite CW-model for $\cX_J^\CC/N(W,J)$ which is a classifying space of $N(A,J)$.
\end{Remark}

\section{$D_4$-Example} \label{sec:example}
In this section, we present an in-depth example with 
\[
\Gamma = D_4 = 
    \dynkin[labels={4,2,1,3},edge
length=.75cm,label directions={,right,,}] D4
\text{ and } J = \{2\}.
\]
In the setting of 3-fold flopping contractions, this setup appears for $cD_4$ singularities with three curves meeting at a point; see \cite[Example 7.4]{Wemyss_flopscluster} and \cite[Example 2.2]{HW_faithfulhyperplane}.
It also appeared in \cite[\S 8.2]{Ko25} in the setting of core (double) cosets; see also \cite[Example 1.9 \& 4.14]{Ko25}.

Let us denote by $s\coloneqq s_1, \ t \coloneqq s_2, \ u \coloneqq s_3, \ v \coloneqq s_4$ the simple reflections of $W=W_\Gamma$ (so $t$ correspond to the trivalent vertex). 
Let $V$ denote the standard geometric representation of $W$.
The positive roots associated to $W$ are given as follows:
\begin{align*}
\Phi^+ = \{&\alpha_1,\ \alpha_2,\ \alpha_3,\ \alpha_4,\ \alpha_1+\alpha_2,\ \alpha_3+\alpha_2,\ \alpha_4+\alpha_2, \\
    &\alpha_1+\alpha_2+\alpha_3,\ \alpha_1+\alpha_2+\alpha_4,\ \alpha_3+\alpha_2+\alpha_4, \\
    &\alpha_1+\alpha_2+\alpha_3+\alpha_4,\ \alpha_1+2\alpha_2+\alpha_3+\alpha_4\}.
\end{align*}

\subsection{Restricted arrangement in the Tits cone intersection}
Let $\Theta = V^*$ denote the contragredient (dual) representation of $W$.
Recall that for $\beta \in V$, $H_\beta$ denotes the hyperplane in $\Theta$ defined by
\[
H_\beta \coloneqq \{ f \in \Theta \mid f(\beta) = 0\} \subseteq \Theta.
\]
Then $\Theta_J = H_{\alpha_2}$ by definition, and since $W$ is finite, we have that 
\[
\Cone(J)^\circ = \Cone(J) = \Theta_J = H_{\alpha_2}.
\]

The set of positive $J$-roots is given by $\Phi^J_+ = \Phi^+\cap \Phi^J = \Phi^+ \setminus \{\alpha_2\}$, and the set of hyperplanes $\cH^J$ is given by $\{ H_\alpha \cap \Theta_J \mid \alpha \neq \alpha_2\}$.
Note that not all positive $J$-roots provide distinct hyperplanes in $\Theta_J = H_{\alpha_2}$; for example, we have $H_{\alpha_1} \cap H_{\alpha_2} = H_{\alpha_1+\alpha_2} \cap H_{\alpha_2}$.
In total, $\cH^J$ consists of 7 hyperplanes $H_\alpha \cap \Theta_J$ obtained by taking $H_\alpha$ to be one of the following:
\[
\{H_{\alpha_1}, H_{\alpha_3}, H_{\alpha_4}, H_{\alpha_1 + \alpha_2 + \alpha_3}, H_{\alpha_1+ \alpha_2+\alpha_4}, H_{\alpha_2+\alpha_3+\alpha_4}, H_{\alpha_1+\alpha_2+\alpha_3+\alpha_4}\}.
\]
The hyperplane arrangement $(\Theta_J,\cH^J)$ associated to the Tits cone intersection of $J$ has 32 chambers and 7 hyperplanes.
This arrangement is presented in Figure \ref{fig:TCI-arrangement}. 
\begin{figure}[h]
    \centering
    \includegraphics[width=0.45\linewidth]{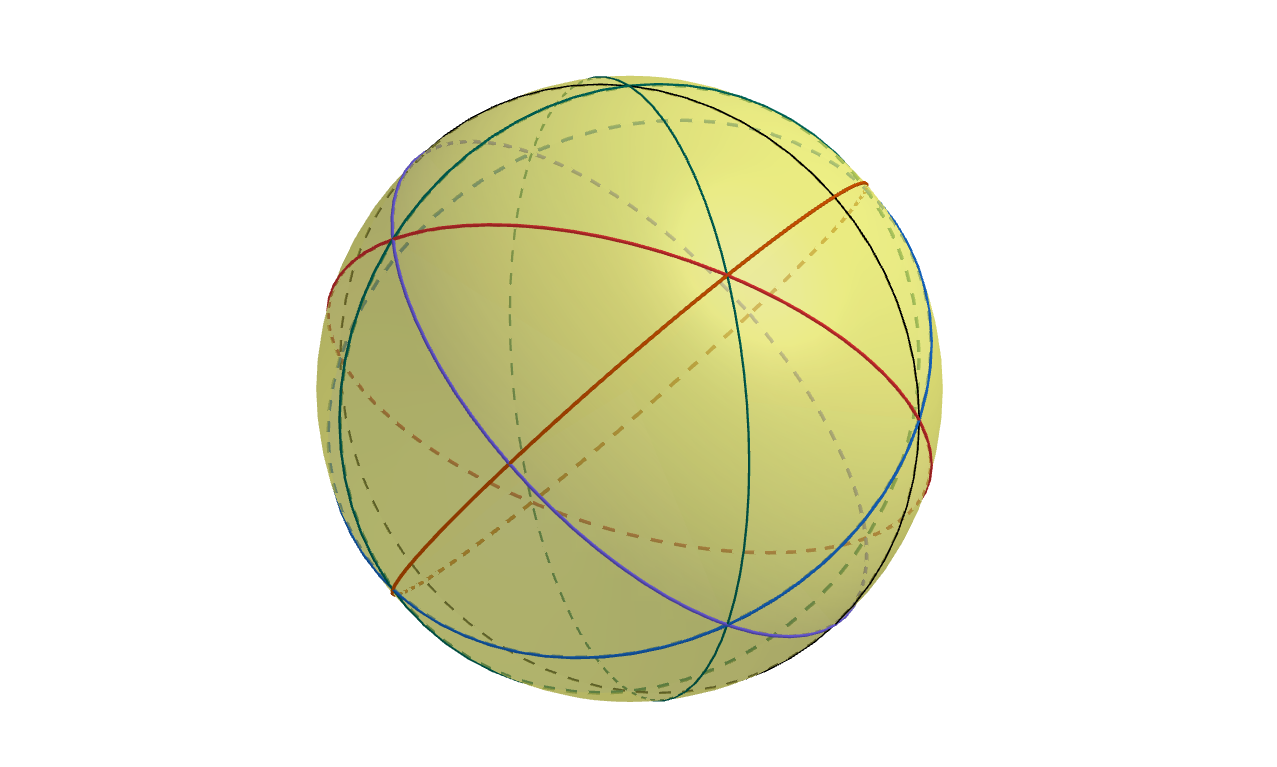}
    \includegraphics[width=0.45\linewidth]{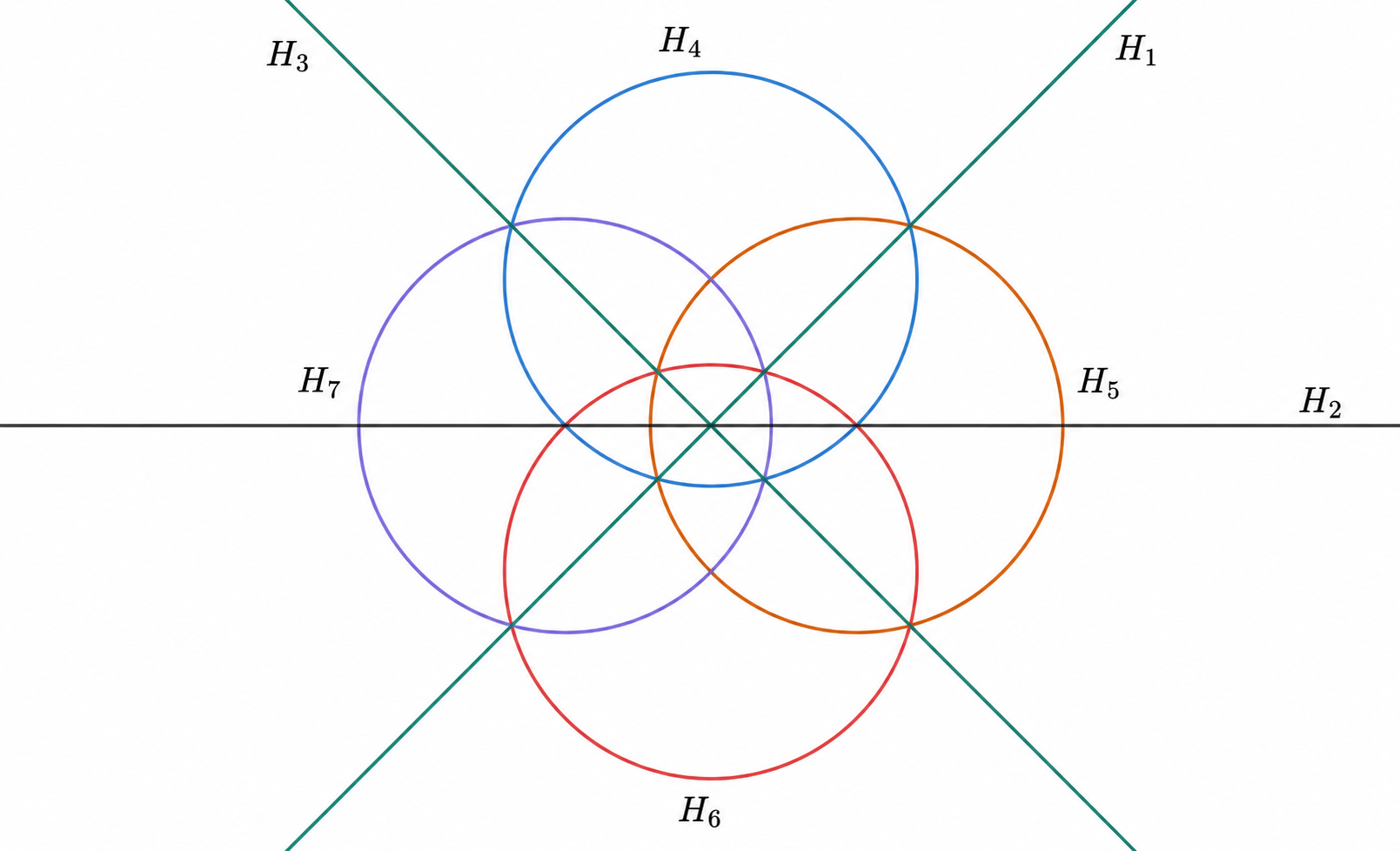}
    \caption{Hyperplane arrangement of the Tits cone intersection for type $D_4$ with $J=\{2\}$ (trivalent node). Left: the intersection of the elements of $\cH^J$ with the unit sphere $\mathbb{S}^2$. Right: the corresponding image obtained via the stereographic projection $\mathbb{S}^2\setminus \{(0,0,1)\}\to \mathbb{R}^2$. In the picture on the right, the line labels correspond to
    $H_1=H_{\alpha_1}, H_2=H_{\alpha_2+\alpha_3+\alpha_4}, H_3=H_{\alpha_1+\alpha_2+\alpha_3+\alpha_4}$ and the four circle labels correspond to
    $H_4=H_{\alpha_3}, H_5=H_{\alpha_1+\alpha_2+\alpha_3}, H_6=H_{\alpha_4}$ and $H_7=H_{\alpha_1+\alpha_2+\alpha_4}$.}
    \label{fig:TCI-arrangement}
\end{figure}
Notice that the arrangement $(\Theta_J,\cH^J)$ cannot be a reflection arrangement. Indeed, it is irreducible and we know that an irreducible reflection arrangement in dimension 3 has either 6, 9 or 15 hyperplanes (respectively for groups of type $A_3,B_3$ and $H_3$) thanks to the classification of finite real reflection groups.

\subsection{Description of the various groupoids involved}
We now give explicit descriptions of the groupoids that appear in the Diagram \eqref{eq:global_diagram}.

\subsubsection{The Brink--Howlett groupoid $\BH$ and its universal cover $\wt{\BH}$}
By Definition \ref{defn:universalgroupoid}, the groupoid $\widetilde{\BH}$ is a simply connected groupoid whose set of objects is equal to the set of chambers $\Cham(J)$ of $(\Theta_J,\cH^J)$. In other words, all of its $\Hom$ sets are singletons. 
Since $\widetilde{\BH}$ is generated by morphisms between adjacent chambers by Proposition \ref{prop:genWC}, it is completely determined by the adjacency graph of the chambers of $(\Theta_J,\cH^J)$, which is shown in Figure \ref{fig:adj_graph}.

\begin{figure}[h]
    \centering
    \includegraphics[scale=0.8]{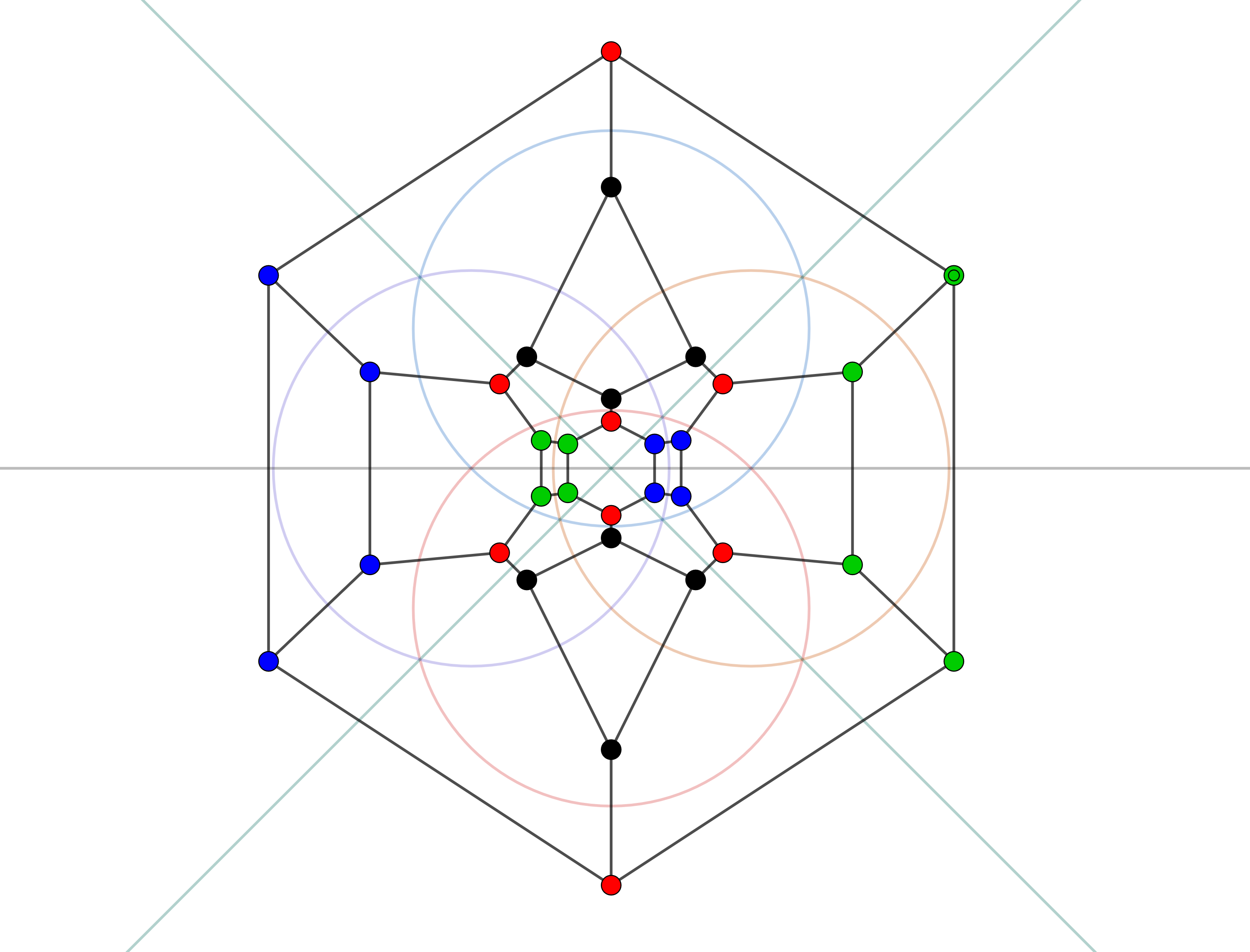}
    \caption{The adjacency graph of chambers overlaid on top of the stereographic projection of the arrangement $(\Theta_J,\cH^J)$. The colours symbolise the four orbits under the action of $N(W,J)$.}
    \label{fig:adj_graph}
\end{figure}


More precisely, recall that the chambers of $(\Theta_J,\cH^J)$ are labelled by pairs $(x,I)$ for $x \in W$ and $I \subseteq \Gamma_0$ such that $W_Jx=xW_I$ and $x$ is the minimal length representative in $xW_I$ (Theorem \ref{thm:labellingtheorem}). The bijection with chambers is given by $(x,I) \equiv xC_I$.
By Proposition \ref{prop:simplewallcrossing}, any two adjacent chambers are related by the wall-crossing formula \eqref{eqn:wallcrossingformula}, so these labels can be easily computed through an iterative application of the wall-crossing formula starting (for example) at $(e,J) \equiv C_J$.
In this way, we obtain the generating graph of $\wt{\BH}$, which is simply the graph in Figure \ref{fig:adj_graph} with vertices replaced by the corresponding chamber labels, and each arrow between adjacent chamber $(x,I)$ and $(y,K)$ labelled by 
\begin{equation} \label{eqn:wallcrossingelement}
    v_{a,I} = x^{-1}y:(x,I)\to \omega_{a,I}(x,I)=(y,K) \quad \text{and} \quad v_{b,K} = y^{-1}x :(y,K)\to \omega_{b,K}(y,K)=(x,I),
\end{equation}
This is shown in Figure \ref{fig:gengraph_tilde_bh}, with arrow labels omitted since they can be deduced from the chamber labels.

\begin{figure}
\[\begin{tikzcd}[scale cd=0.75, column sep=tiny, row sep=small,ampersand replacement=\&]
	\&\&\&\& \textcolor{rgb,255:red,247;green,59;blue,62}{{(stuts,2)}} \&\&\&\& \\
	\&\&\&\& {(stu,1)} \\
	\&\&\& {(stuv,1)} \&\& {(st,1)} \\
	\&\& \textcolor{rgb,255:red,247;green,59;blue,62}{{(stuvts,2)}} \&\& {(stv,1)} \&\& \textcolor{rgb,255:red,247;green,59;blue,62}{{(e,2)}} \\
	\textcolor{rgb,255:red,72;green,75;blue,234}{{(stutsvt,4)}} \&\&\&\& \textcolor{rgb,255:red,247;green,59;blue,62}{{(stvts,2)}} \&\&\&\& \textcolor{rgb,255:red,22;green,187;blue,60}{{(uts,3)}} \\
	\& \textcolor{rgb,255:red,72;green,75;blue,234}{{(stutsvts,4)}} \& \textcolor{rgb,255:red,22;green,187;blue,60}{{(stuvtsut,3)}} \& \textcolor{rgb,255:red,22;green,187;blue,60}{{(stvtsut,3)}} \&\& \textcolor{rgb,255:red,72;green,75;blue,234}{{(vts,4)}} \& \textcolor{rgb,255:red,72;green,75;blue,234}{{(vt,4)}} \& \textcolor{rgb,255:red,22;green,187;blue,60}{{(ut,3)}} \\
	\& \textcolor{rgb,255:red,72;green,75;blue,234}{{(stutsvtsu,4)}} \& \textcolor{rgb,255:red,22;green,187;blue,60}{{(stuvtsutv,3)}} \& \textcolor{rgb,255:red,22;green,187;blue,60}{{(stvtsutv,3)}} \&\& \textcolor{rgb,255:red,72;green,75;blue,234}{{(vtsu,4)}} \& \textcolor{rgb,255:red,72;green,75;blue,234}{{(vtu,4)}} \& \textcolor{rgb,255:red,22;green,187;blue,60}{{(utv,3)}} \\
	\textcolor{rgb,255:red,72;green,75;blue,234}{{(stutsvtu,4)}} \&\&\&\& \textcolor{rgb,255:red,247;green,59;blue,62}{{(vtsutv,2)}} \&\&\&\& \textcolor{rgb,255:red,22;green,187;blue,60}{{(utsv,3)}} \\
	\&\& \textcolor{rgb,255:red,247;green,59;blue,62}{{(stuvtsutvtu,2)}} \&\& {(vtutvstv,1)} \&\& \textcolor{rgb,255:red,247;green,59;blue,62}{{(vtutv,2)}} \\
	\&\&\& {(utsvtustv,1)} \&\& {(vtutvst,1)} \\
	\&\&\&\& {(utsvtust,1)} \\
	\&\&\&\& \textcolor{rgb,255:red,247;green,59;blue,62}{{(utsvtu,2)}}
	\arrow[shift right, from=1-5, to=2-5]
	\arrow[shift right, from=1-5, to=5-1]
	\arrow[shift right, from=1-5, to=5-9]
	\arrow[shift right, from=2-5, to=1-5]
	\arrow[shift right, from=2-5, to=3-4]
	\arrow[shift right, from=2-5, to=3-6]
	\arrow[shift right, from=3-4, to=2-5]
	\arrow[shift right, from=3-4, to=4-3]
	\arrow[shift right, from=3-4, to=4-5]
	\arrow[shift right, from=3-6, to=2-5]
	\arrow[shift right, from=3-6, to=4-5]
	\arrow[shift right, from=3-6, to=4-7]
	\arrow[shift right, from=4-3, to=3-4]
	\arrow[shift right, from=4-3, to=6-2]
	\arrow[shift right, from=4-3, to=6-3]
	\arrow[shift right, from=4-5, to=3-4]
	\arrow[shift right, from=4-5, to=3-6]
	\arrow[shift right, from=4-5, to=5-5]
	\arrow[shift right, from=4-7, to=3-6]
	\arrow[shift right, from=4-7, to=6-7]
	\arrow[shift right, from=4-7, to=6-8]
	\arrow[shift right, from=5-1, to=1-5]
	\arrow[shift right, from=5-1, to=6-2]
	\arrow[shift right, from=5-1, to=8-1]
	\arrow[shift right, from=5-5, to=4-5]
	\arrow[shift right, from=5-5, to=6-4]
	\arrow[shift right, from=5-5, to=6-6]
	\arrow[shift right, from=5-9, to=1-5]
	\arrow[shift right, from=5-9, to=6-8]
	\arrow[shift right, from=5-9, to=8-9]
	\arrow[shift right, from=6-2, to=4-3]
	\arrow[shift right, from=6-2, to=5-1]
	\arrow[shift right, from=6-2, to=7-2]
	\arrow[shift right, from=6-3, to=4-3]
	\arrow[shift right, from=6-3, to=6-4]
	\arrow[shift right, from=6-3, to=7-3]
	\arrow[shift right, from=6-4, to=5-5]
	\arrow[shift right, from=6-4, to=6-3]
	\arrow[shift right, from=6-4, to=7-4]
	\arrow[shift right, from=6-6, to=5-5]
	\arrow[shift right, from=6-6, to=6-7]
	\arrow[shift right, from=6-6, to=7-6]
	\arrow[shift right, from=6-7, to=4-7]
	\arrow[shift right, from=6-7, to=6-6]
	\arrow[shift right, from=6-7, to=7-7]
	\arrow[shift right, from=6-8, to=4-7]
	\arrow[shift right, from=6-8, to=5-9]
	\arrow[shift right, from=6-8, to=7-8]
	\arrow[shift right, from=7-2, to=6-2]
	\arrow[shift right, from=7-2, to=8-1]
	\arrow[shift right, from=7-2, to=9-3]
	\arrow[shift right, from=7-3, to=6-3]
	\arrow[shift right, from=7-3, to=7-4]
	\arrow[shift right, from=7-3, to=9-3]
	\arrow[shift right, from=7-4, to=6-4]
	\arrow[shift right, from=7-4, to=7-3]
	\arrow[shift right, from=7-4, to=8-5]
	\arrow[shift right, from=7-6, to=6-6]
	\arrow[shift right, from=7-6, to=7-7]
	\arrow[shift right, from=7-6, to=8-5]
	\arrow[shift right, from=7-7, to=6-7]
	\arrow[shift right, from=7-7, to=7-6]
	\arrow[shift right, from=7-7, to=9-7]
	\arrow[shift right, from=7-8, to=6-8]
	\arrow[shift right, from=7-8, to=8-9]
	\arrow[shift right, from=7-8, to=9-7]
	\arrow[shift right, from=8-1, to=5-1]
	\arrow[shift right, from=8-1, to=7-2]
	\arrow[shift right, from=8-1, to=12-5]
	\arrow[shift right, from=8-5, to=7-4]
	\arrow[shift right, from=8-5, to=7-6]
	\arrow[shift right, from=8-5, to=9-5]
	\arrow[shift right, from=8-9, to=5-9]
	\arrow[shift right, from=8-9, to=7-8]
	\arrow[shift right, from=8-9, to=12-5]
	\arrow[shift right, from=9-3, to=7-2]
	\arrow[shift right, from=9-3, to=7-3]
	\arrow[shift right, from=9-3, to=10-4]
	\arrow[shift right, from=9-5, to=8-5]
	\arrow[shift right, from=9-5, to=10-4]
	\arrow[shift right, from=9-5, to=10-6]
	\arrow[shift right, from=9-7, to=7-7]
	\arrow[shift right, from=9-7, to=7-8]
	\arrow[shift right, from=9-7, to=10-6]
	\arrow[shift right, from=10-4, to=9-3]
	\arrow[shift right, from=10-4, to=9-5]
	\arrow[shift right, from=10-4, to=11-5]
	\arrow[shift right, from=10-6, to=9-5]
	\arrow[shift right, from=10-6, to=9-7]
	\arrow[shift right, from=10-6, to=11-5]
	\arrow[shift right, from=11-5, to=10-4]
	\arrow[shift right, from=11-5, to=10-6]
	\arrow[shift right, from=11-5, to=12-5]
	\arrow[shift right, from=12-5, to=8-1]
	\arrow[shift right, from=12-5, to=8-9]
	\arrow[shift right, from=12-5, to=11-5]
\end{tikzcd}\]

  \caption{Generating graph for the universal groupoid $\widetilde{\BH}$ associated to the Tits cone intersection for type $D_4$ with $J=\{2\}$ (trivalent node), with arrow labels omited (deducible from the chamber labels). Cf.\ Figure \ref{fig:adj_graph}.}
    \label{fig:gengraph_tilde_bh}
\end{figure}
Being a simply-connected groupoid, all paths from one chamber to another lead to the same morphism.
In particular, all two cycles $(x,I) \rightleftarrows (y,K)$ result in the identity, and the arrows are labelled mutually inverse elements (which are not necessarily equal as elements in $W$).  


By Theorem \ref{thm:WCuniversalcover}, $\widetilde{\BH}$ is the universal cover of the Brink--Howlett groupoid $\BH$ with deck transformation group $N(W,J)$, where the functor sends $(x,I) \mapsto I$ on objects and is the identity on morphism.
By Lemma \ref{lem:normaliseraction}, the action of each $g \in \mathrm{Norm}_W(W_J)$ is free on $\Cham(J)$ (the objects of $\wt{\BH}$), with chambers sharing the same second label lying in the same orbit. We have colour-coded the vertices of the graph in Figure \ref{fig:gengraph_tilde_bh} (and also Figure \ref{fig:adj_graph}) so that two chambers $(x,I)$ and $(y,K)$ share the same colour if and only if $I=K$. In particular, we see that there are 4 orbits of $N(W,J)$ (one for each $J$-associate) and each orbit contains 8 chambers.

Through this, we can obtain the generating graph of $\BH$ by taking the quotient of the generating graph of $\wt{\BH}$ in Figure \ref{fig:gengraph_tilde_bh} that identifies all vertices of the same colour.
This is depicted in Figure \ref{fig:BHgen}.
As before, notice that two opposite directions of an edge are labelled by elements in $W$ that are mutual inverses but need not be the same element. 
\begin{figure}
    \centering
\begin{tikzcd}
	&& 1 \\
	\textcolor{rgb,255:red,63;green,81;blue,243}{4} & \textcolor{rgb,255:red,249;green,57;blue,57}{2} \\
	&& \textcolor{rgb,255:red,28;green,181;blue,28}{3}
	\arrow["u", from=1-3, to=1-3, loop, in=330, out=30, distance=5mm]
	\arrow["v", from=1-3, to=1-3, loop, in=60, out=120, distance=5mm]
	\arrow["ts"', curve={height=6pt}, from=1-3, to=2-2]
	\arrow["s", from=2-1, to=2-1, loop, in=105, out=165, distance=5mm]
	\arrow["u", from=2-1, to=2-1, loop, in=195, out=255, distance=5mm]
	\arrow["tv", curve={height=-6pt}, from=2-1, to=2-2]
	\arrow["st"', curve={height=6pt}, from=2-2, to=1-3]
	\arrow["vt", curve={height=-6pt}, from=2-2, to=2-1]
	\arrow["ut", curve={height=-6pt}, from=2-2, to=3-3]
	\arrow["tu", curve={height=-6pt}, from=3-3, to=2-2]
	\arrow["s", from=3-3, to=3-3, loop, in=330, out=30, distance=5mm]
	\arrow["v", from=3-3, to=3-3, loop, in=240, out=300, distance=5mm]
\end{tikzcd}
    \caption{Generating graph of the Brink--Howlett groupoid $\BH$ associated to the Tits cone intersection for type $D_4$ with $J=\{2\}$ (trivalent node).}
    \label{fig:BHgen}
\end{figure}

Being a (universal) cover, each morphism (path) $w \in \Hom_{\BH}(I,K)$ has a unique lift to a morphism (path) $w$ in $\widetilde{\BH}$ once we fix a lift $(x,I) \in \Cham(J)$ of the source $I$.
As such, each morphism $x:J \to I$ in $\BH$ is uniquely lifted to the morphism $x: (e,J) \to (x,I)$ in $\wt{\BH}$.
In particular, $N(W,J)$ is isomorphic to the vertex group $\BH_J$ of $\BH$ given by the 8-element subgroup of $W$: \{$x \in W \mid (x,2) \in \Cham(J)\}$ (i.e. the red vertices in Figure \ref{fig:gengraph_tilde_bh}).

Suppose that two morphisms in $\BH$ are related by the atomic braid relations (Definition \ref{def:atomicbraidrel}). Then in $\wt{\BH}$ these are lifted to morphisms which are related by codimension two relations (Definition \ref{def:codim2rel}).
In our example, these correspond to paths through the polygonal faces -- including the outer-most hexagon -- of the generating graph for $\wt{\BH}$ in Figure \ref{fig:gengraph_tilde_bh}.
For instance, the following atomic braid relations in $\BH$
\[
st\cdot v\cdot ts = vt\cdot s \cdot tv \quad \text{and} \quad s\cdot tv\cdot st = tv\cdot ts\cdot v
\]
correspond, respectively, to the following paths along the same hexagonal face: 
\begin{equation} \label{eqn:hexagonatomicbraid}
\begin{tikzcd}[sep=tiny]
	& {(st,1)} & {(stv,1)} &&&& \textcolor{rgb,255:red,59;green,69;blue,247}{{(vt,4)}} & \textcolor{rgb,255:red,255;green,51;blue,58}{{(e,2)}} & \\
	\textcolor{rgb,255:red,255;green,51;blue,58}{{(e,2)}} &&& \textcolor{rgb,255:red,255;green,51;blue,58}{{(stvts,2)}}  &\text{and}& \textcolor{rgb,255:red,59;green,69;blue,247}{{(vts,4)}} &&& {(st,1)}. \\
	& \textcolor{rgb,255:red,59;green,69;blue,247}{{(vt,4)}} & \textcolor{rgb,255:red,59;green,69;blue,247}{{(vts,4)}} &&&& \textcolor{rgb,255:red,255;green,51;blue,58}{{(stvts,2)}} & {(stv,1)}
	\arrow[from=1-2, to=1-3]
	\arrow[from=1-3, to=2-4]
	\arrow[from=1-7, to=1-8]
	\arrow[from=1-8, to=2-9]
	\arrow[from=2-1, to=1-2]
	\arrow[from=2-1, to=3-2]
	\arrow[from=2-6, to=1-7]
	\arrow[from=2-6, to=3-7]
	\arrow[from=3-2, to=3-3]
	\arrow[from=3-3, to=2-4]
	\arrow[from=3-7, to=3-8]
	\arrow[from=3-8, to=2-9]
\end{tikzcd}
\end{equation}

In particular, the atomic braid relations for our example are all of length 2 (squares) or length 3 (hexagons).
Since the defining relations of $\BH$ are given by the atomic braid relations and the quadratic relations (Theorem \ref{thm:BHthmA}), $\wt{\BH}$ is also isomorphic to the fundamental groupoid of the space obtained from the generating graph by attaching two-cells to each two cycles between adjacent chambers, and also to the polygonal faces including the outer-most hexagon (equivalently, the space obtained from the adjacency graph by attaching two-cells to polygonal faces).
For our example, the resulting space is a 2-sphere.

One should view $\widetilde{\BH}$ as the (right) ``Cayley graph'' for the groupoid $\BH$ with respect to the generators $v_{a,I}$. 
Indeed, when $J = \emptyset$, the arrangement is just the Coxeter arrangement, and the generating graph of $\widetilde{\BH}$ is exactly the  Cayley graph of $W = \BH$ with respect to the simple generators $s_a=v_{a,\varnothing}$.

\subsubsection{The Deligne groupoid $\sD$ and the reduced ribbon groupoid $\R$}
The Deligne groupoid and the reduced ribbon groupoid are essentially defined from the same generating graphs of $\wt{\BH}$ (Figure \ref{fig:gengraph_tilde_bh}) and $\BH$ (Figure \ref{fig:BHgen}) respectively, where the quadratic relations are removed -- just like defining relations of the Artin--Tits group $A$ and its Coxeter group $W$ simply differs by removing the quadratic relations.
Indeed, the Deligne groupoid $\D$ is by definition the groupoid completion of the category $\D^+$ whose morphisms are paths of the graph obtained from Figure \ref{fig:adj_graph} with each edge $x$ --- $y$ replaced by a pair of arrows $x\leftrightarrows y$ (see \S\ref{sec:Dgroupoid}), modulo relations generated by identifying geodesic paths with the same source and target.
This is the same graph as Figure \ref{fig:gengraph_tilde_bh} -- the point is that the quadratic relations no longer hold, so that morphisms through two-cycles no longer result in the identity.

Note that by Corollary \ref{cor:arrangement_atomicMat}, it is sufficient to identify only the geodesic paths that come from codimension two relations (Definition \ref{def:codim2rel}) -- these correspond exactly to the polygonal faces (squares and hexagons) discussed in the previous subsection for $\wt{\BH}$.
For instance, the diagrams of morphisms in \eqref{eqn:hexagonatomicbraid} -- now viewed as morphisms in $\D^+$ (and $\D$) -- are also commutative.


By Corollary \ref{cor:GandQequiv}, $\D$ is a normal covering of the reduced ribbon groupoid with deck transformation group $N(W,J)$.
We refer the reader to \S\ref{sec:mainresult} for the definition of the covering functor $\cG: \D \to \R$, but the important point here is that we can describe $\R$ as $\D/N(W,J)$.
In particular, since the action of $N(W,J)$ on $\D$ is essentially the same as that on $\wt{\BH}$ (see Remark \ref{rmk:NormactiononD}), we can describe $\R$ via the $N(W,J)$-quotient of the generating graph of $\sD$: the generating graph of $\R$ is simply the graph in Figure \ref{fig:BHgen}, with arrow labels in terms of the generators $s,t,u,v \in W$ replaced by their respective positive lifts $\sigma_s,\sigma_t, \sigma_u, \sigma_v \in A^+$ (Lemma \ref{lem:Gwelldef}).
As before, the relations in $\R$ are also reflected in $\D$ since $\D$ is the normal covering of $\R$, so all relations in $\R$ are generated by those induced from the polygonal faces of the graph.
We emphasise again that while two-cycles (and looping twice) in the generating graph of $\BH$ result in the identity, this is no longer the case in $\R$ ($s^2=e$, but $\sigma_s^2\neq e$).

The functor $\sD\to \widetilde{\BH}$ is easy to describe: it is the identity on objects and it sends a path $p:(x,I) \to (y,K)$ in $\D$ to the (unique) morphism $x^{-1}y:(x,I) \to (y,K)$ in $\widetilde{\BH}$, which agrees with the morphism $v_p$ induced by the path $p$. 
The natural functor $\sR\to \BH$ is also simply induced by the canonical quotient map $A\to W$.

We end this example with the following remark.
\begin{Remark}
The vertices of the graph in Figures \ref{fig:adj_graph} and \ref{fig:gengraph_tilde_bh} can be graded according to the length of a (equivalently, all) geodesic path between the chamber labelling it and the chamber $C\equiv {\color{red}(e,2)}$. Equivalently, the grading of a vertex labelled by $(x,I)$  is the standard expression length (Definition \ref{def:stdexp}) of $x$ (Lemma \ref{lem:stdexp-geodesic}). Rearranging the graph according to this grading results in the following, where we represent only the colour of each chamber (the colours together with the fact that the bottom element is the chamber ${\color{red}(e,2)}$ completely determine the graph).
\[\begin{tikzcd}[sep=tiny]
	&&& \textcolor{rgb,255:red,247;green,59;blue,62}{\bullet} &&& \\
	& \textcolor{rgb,255:red,22;green,187;blue,60}{\bullet} && \textcolor{rgb,255:red,72;green,75;blue,234}{\bullet} && \bullet \\
	\textcolor{rgb,255:red,22;green,187;blue,60}{\bullet} & \textcolor{rgb,255:red,22;green,187;blue,60}{\bullet} & \textcolor{rgb,255:red,72;green,75;blue,234}{\bullet} && \textcolor{rgb,255:red,72;green,75;blue,234}{\bullet} & \bullet & \bullet \\
	\textcolor{rgb,255:red,22;green,187;blue,60}{\bullet} & \textcolor{rgb,255:red,247;green,59;blue,62}{\bullet} & \textcolor{rgb,255:red,247;green,59;blue,62}{\bullet} & \textcolor{rgb,255:red,72;green,75;blue,234}{\bullet} && \textcolor{rgb,255:red,247;green,59;blue,62}{\bullet} & \bullet \\
	\bullet & \textcolor{rgb,255:red,247;green,59;blue,62}{\bullet} && \textcolor{rgb,255:red,72;green,75;blue,234}{\bullet} & \textcolor{rgb,255:red,247;green,59;blue,62}{\bullet} & \textcolor{rgb,255:red,247;green,59;blue,62}{\bullet} & \textcolor{rgb,255:red,22;green,187;blue,60}{\bullet} \\
	\bullet & \bullet & \textcolor{rgb,255:red,72;green,75;blue,234}{\bullet} && \textcolor{rgb,255:red,72;green,75;blue,234}{\bullet} & \textcolor{rgb,255:red,22;green,187;blue,60}{\bullet} & \textcolor{rgb,255:red,22;green,187;blue,60}{\bullet} \\
	& \bullet && \textcolor{rgb,255:red,72;green,75;blue,234}{\bullet} && \textcolor{rgb,255:red,22;green,187;blue,60}{\bullet} \\
	&&& \textcolor{rgb,255:red,247;green,59;blue,62}{\bullet}
	\arrow[no head, from=1-4, to=2-2]
	\arrow[no head, from=1-4, to=2-4]
	\arrow[no head, from=2-2, to=3-1]
	\arrow[no head, from=2-4, to=3-3]
	\arrow[no head, from=2-4, to=3-5]
	\arrow[no head, from=2-6, to=1-4]
	\arrow[no head, from=2-6, to=3-7]
	\arrow[no head, from=3-2, to=2-2]
	\arrow[no head, from=3-2, to=4-1]
	\arrow[no head, from=3-3, to=4-2]
	\arrow[no head, from=3-3, to=4-4]
	\arrow[no head, from=3-5, to=4-4]
	\arrow[no head, from=3-5, to=4-6]
	\arrow[no head, from=3-6, to=2-6]
	\arrow[no head, from=3-7, to=4-6]
	\arrow[no head, from=3-7, to=4-7]
	\arrow[no head, from=4-1, to=3-1]
	\arrow[no head, from=4-1, to=5-2]
	\arrow[no head, from=4-2, to=3-1]
	\arrow[no head, from=4-2, to=5-1]
	\arrow[no head, from=4-3, to=3-2]
	\arrow[no head, from=4-3, to=3-6]
	\arrow[no head, from=4-3, to=5-4]
	\arrow[no head, from=4-4, to=5-5]
	\arrow[no head, from=4-6, to=5-7]
	\arrow[no head, from=4-7, to=3-6]
	\arrow[no head, from=4-7, to=5-6]
	\arrow[no head, from=5-1, to=6-2]
	\arrow[no head, from=5-2, to=6-1]
	\arrow[no head, from=5-2, to=6-3]
	\arrow[no head, from=5-4, to=6-5]
	\arrow[no head, from=6-1, to=5-1]
	\arrow[no head, from=6-1, to=7-2]
	\arrow[no head, from=6-2, to=5-5]
	\arrow[no head, from=6-3, to=5-4]
	\arrow[no head, from=6-3, to=7-4]
	\arrow[no head, from=6-5, to=5-6]
	\arrow[no head, from=6-5, to=7-4]
	\arrow[no head, from=6-6, to=5-5]
	\arrow[no head, from=6-6, to=5-7]
	\arrow[no head, from=6-7, to=5-6]
	\arrow[no head, from=6-7, to=5-7]
	\arrow[no head, from=7-2, to=6-2]
	\arrow[no head, from=7-2, to=8-4]
	\arrow[no head, from=7-4, to=8-4]
	\arrow[no head, from=7-6, to=6-6]
	\arrow[no head, from=7-6, to=6-7]
	\arrow[no head, from=8-4, to=7-6]
\end{tikzcd}\]
This corresponds to the Hasse diagram for the poset of regions with base chamber $C \equiv {\color{red}(e,2)}$ (see \cite[\S I]{Edelman_regionposet} or \cite[Definition 2.2]{Delucchi_combinatorialrmk}), where the minimum element is $C$ and the maximum element is the antipodal chamber $-C \equiv {\color{red}(stuvtsutvtu,2)}$. 
In particular, this is also the Hasse diagram for the divisibility relation for the set of simple morphisms with source $C$ for the Garside structure of the Deligne groupoid $\D$. 
It also agrees with the Hasse diagram (with opposite orientation) given above Remark 8.6 in \cite{Ko25}.
\end{Remark}

\bibliographystyle{alpha}
\bibliography{monbib}

\end{document}